\newtheorem{theorem}{Theorem}
\newtheorem{lemma}{Lemma}
\newtheorem{ex}{Example}
\newtheorem{remark}{Remark}
\newtheorem{definition}{Definition}
\newcommand{\C}{\mathbb{C}}
\newcommand{\N}{\mathbb{N}}
\newcommand{\gl}{\mathfrak{gl}}
\newcommand{\so}{\mathfrak{so}}
\newcommand{\g}{\mathfrak{g}}
\newcommand{\p}{\mathfrak{p}}
\newcommand{\oF}{\overline{F}}
\newcommand{\ind}{\mathrm{ind\;}}
\newcommand{\mf}{\mathfrak}
\title{\LARGE{{Regular functionals on seaweed Lie algebras}}}
\author{Vincent E. Coll, Jr.$^\dagger$ and Aria L. Dougherty$^{\dagger\dagger}$\footnote{The main results of this article were developed in the second author's 2019 Lehigh Univeristy Ph.D. thesis: ``Regular Functionals on Seaweed Lie Algebras."}}
\begin{document}


\maketitle

\noindent
\begin{center}
\textit{$^\dagger$Department of Mathematics, Lehigh University, Bethlehem, PA, USA: vec208@lehigh.edu }\\
\textit{$^{\dagger\dagger}$Department of Mathematics, Lehigh University, Bethlehem, PA, USA:  dr.ariadougherty@gmail.com}\\
\end{center}

\begin{abstract}
\noindent
The index of a Lie algebra $\mathfrak{g}$ is defined by ind $\mathfrak{g}=$ $\min_{f\in \mathfrak{g}^*}\dim(\ker (B_f))$, where $f$ is an element of the linear dual $\mathfrak{g}^*$ and $B_f(x,y)=f([x,y])$ is the associated skew-symmetric Kirillov form. We develop a broad general framework for the explicit construction of regular (index realizing) functionals for seaweed subalgebras of $\mathfrak{gl}(n)$ and the classical Lie algebras: $A_n=\mf{sl}(n+1),$ $B_n=\so(2n+1)$, and $C_n=\mf{sp}(2n)$.  Until now, this problem has remained open in $\mathfrak{gl}(n)$ -- and in all the classical types.
\end{abstract}


\bigskip
\noindent
\textit{Mathematics Subject Classification 2010}: 17B20, 05E15

\noindent 
\textit{Key Words and Phrases}: Index of a Lie algebra, regular functional, seaweed, biparabolic, meander

\tableofcontents
\section{Introduction}

The \textit{index} of a Lie algebra $\g$ is an important algebraic invariant which was first formally introduced by Dixmier (\textbf{[9]}, 1974). It is defined by 

\[
\ind \g=\min_{f\in \mathfrak{g}^*} \dim  (\ker (B_f)),
\]
where $f$ is an element of the linear dual $\mathfrak{g}^*$  and $B_f$ is the associated skew-symmetric \textit{Kirillov form} defined by 
\[
B_f(x,y)=f([x,y]), \textit{ for all }~ x,y\in\g. 
\]

Here, we focus on a class of matrix algebras called \textit{seaweed algebras}, or simply ``seaweeds".  These algebras, along with their evocative name,  were first introduced by Dergachev and A. Kirillov in (\textbf{[8]}, 2000), where they defined such algebras as subalgebras of $\mathfrak{gl}(n)$ preserving certain flags of subspaces 
developed from two compositions of $n$.  The passage to the classical seaweeds is accomplished by requiring that elements of the seaweed subalgebra of $\mathfrak{gl}(n)$
satisfy additional algebraic conditions. For example, the Type-$A$ case ($A_{n}=\mathfrak{sl}(n+1)$) is defined by a vanishing trace condition. There is a basis-free definition but we do not require it here.

On a given seaweed $\g$, index-realizing functionals are called \textit{regular} and exist in profusion, being dense in both the Zariski and Euclidean topolgies of $\g^*$ (see \textbf{[15]}).   Even so, methods for explicitly constructing regular functionals are few.  

One such method is due indirectly to Kostant.  In 1960, and after the fashion of the Gram-Schmidt orthogonalization process, Kostant developed an algorithm, called a \textit{cascade}\footnote{See (\textbf{[12]}, 1976) for an early description of the cascade by Anthony Joseph.  This paper cites Bertram Kostant and Jacques Tits as discovering this process independently;  Tits is cited as (\textbf{[19]}, 1960), and Kostant is cited as private communication with no specified year.  For a more recent paper on the cascade by Kostant, see (\textbf{[14]}, 2012).}, that produces a set of strongly orthogonal roots from a root system which defines a Lie algebra. In 2004,  Tauvel and Yu \textbf{[18]} noted that, in many cases, and as a by-product of this process, a regular functional could be constructed using representative elements in $\g$ of the root spaces for the highest roots generated by the cascade.  
For \textit{Frobenius} (index zero) seaweeds, the cascade will always produce a regular, or Frobenius, functional (see \textbf{[13]}, p. 19), but in the non-Frobenius case the cascade will often fail.


Beyond the Kostant cascade, and prior to the work here, the authors were not aware of an algorithmic procedure that would produce a regular functional in the generic case for seaweeds of classical type.  Indeed, Joseph has noted that the resolution of this gap is a significant open problem in even the Type-$A$ case (see \textbf{[13]}, p. 774).

Why the cascade sometimes fails to produce a regular functional, or rather for what seaweeds it fails, is the starting point for our investigation. This research is detailed in \textbf{[3]} and \textbf{[10]}, where it becomes apparent that the success or failure of the cascade depends on the homotopy type of the seaweed, a component structure of a certain planar graph called a \textit{meander}. Meanders were introduced in \textbf{[8]}
by Dergachev and A. Kirillov, where they showed that the index of a seaweed subalgebra of $\mathfrak{gl}(n)$ -- and by an easy extension,
$\mathfrak{sl}(n)$ -- could be computed by an elementary combinatorial formula based on the number and type of the connected components of the meander associated with the seaweed.  Even so, significant computational complexity persists. This complexity can be can be mollified by ``winding-down" the meander through a sequence of deterministic graph-theoretic moves (``winding-down moves") which yields the meander's essential configuration, which we call the meander's \textit{homotopy type} (see \textbf{[6]} and \textbf{[4]}). This winding down procedure may be regarded as a graph theoretic rendering of Panychev's well-known reduction algorithm (see \textbf{[16]}). (Reversing the winding-down moves yields ``winding-up" moves from which any meander (and so any seaweed), of any size or configuration, can be constructed.)\footnote{In \textbf{[5]}, Coll et al. extended this formulaic construction to the Type-$C$ and Type-$B$ cases (cf. \textbf{[17]}).  More recently, Cameron (in his 2019 Ph.D. thesis at Lehigh University \textbf{[2]}) has extended these results to the Type-$D$ case, thus completing the combinatorial classification of seaweeds in the classical types (cf.\textbf{[11]}).}

The strategy for producing a regular functional on a seaweed $\mathfrak{g}$ in the classical types considered here is to first develop an explicit regular functional $F_n$ on $\mathfrak{gl}(n)$ -- see Section 3.  One then uses $\mathfrak{g}$'s meander to show how the meander's components identify a ``configuration" of admissible positions in $\mathfrak{g}$.  Each of these configurations contains certain square matrix blocks $C_{c\times c}$, called \textit{core blocks of the configuration}. The union of all core blocks over all configurations constitutes $\textgoth{C}$ - the \textit{core of the seaweed} $\mathfrak{g}$.  To build a regular functional on $\g$, one inserts a copy of $F_c$ into $C_{c\times c}$, for all $C$ in $\textgoth{C}$, and zeros elsewhere in the admissible locations of $\g$ -- with some exceptions based on aspects of the seaweed's ``shape". The regularity of the adjusted functional is established by Theorem \ref{Functional Construction}.  This not only resolves the open problem for seaweed subalgebras of $\gl(n)$, but, in conjunction with Theorem \ref{My Theorem 1}, more broadly delivers an algorithmic procedure for the construction of regular functionals on seaweeds in the classical types $A$, $B$, and $C$ -- see section 4.  To ease exposition, we consider types $A$ and $C$ first, and then conclude with the type $B$ case.



\section{Preliminaries}
\label{Preliminaries}

\begin{subsection}{Seaweed subalgebras of $\gl(n)$}
\label{Seaweed Subalgebras}
\noindent
\textit{Notation:}  All Lie algebras $\g$ are finite-dimensional over the complex numbers, and by Ado's theorem (see \textbf{[1]}) are therefore assumed to be subalgebras of $\mathfrak{gl}(n)$. We assume that $\g$ comes equipped with a triangular decomposition $\g=\mathfrak{u}_+\oplus\mathfrak{h} \oplus\mathfrak{u}_-,$ where $\mathfrak{h}$ is a Cartan subalgebra of $\g$ and $\mathfrak{u}_+$ and $\mathfrak{u}_-$ consist of the upper and lower triangular matrices, respectively. Any seaweed is conjugate, over its algebraic group, to a 
seaweed in this \textit{standard} form.  A basis-free definition (due to Panyushev \textbf{[16]}) reckons seaweed subalgebras of a simple Lie algebra $\mathfrak{g}$ as the intersection of two parabolic algebras whose sum is $\mathfrak{g}$.  For this reason, Joseph has elsewhere \textbf{[13]} called these algebras \textit{biparabolic}.  We do not require the latter definition in our work here.

\bigskip 

The basic objects of our study are the evocatively named ``seaweed" Lie algebras first introduced by Dergachev and A.  Kirillov in \textbf{[8]} and defined as follows.

\begin{definition}
\label{seaweeddef}
Let $(a_1,\cdots,a_m)$ and $(b_1,\cdots,b_t)$ be two compositions of $n$, and let $\{0\}=V_0\subset V_1\subset\cdots\subset V_m=\C^n$, and $\C^n=W_0\supset W_1\supset\cdots\supset W_t=\{0\},$ where $V_i=\text{span} \{e_1,\cdots,e_{a_1+\cdots+a_i}\}$ and $W_j=\text{span}\{e_{b_1+\cdots+b_j+1},\cdots,e_n\}$. The {standard} {seaweed} $\g$ of \textit{{type}} $\frac{a_1|\cdots|a_m}{b_1|\cdots|b_t}$ is the subalgebra of $\gl(n)$ which preserves the spaces $V_i$ and $W_j$.
\end{definition}


To each seaweed of type $\frac{a_1|\cdots|a_m}{b_1|\cdots|b_t}$ we  associate a planar graph called a \textit{{meander}}, constructed as follows.  First, place $n$ vertices $v_1$ through $v_n$ in a horizontal line.  Next, create two partitions of the vertices by forming \textit{{top}} and {\textit{bottom blocks}} of vertices of size $a_1$, $a_2$, $\cdots$, $a_m$, and $b_1$, $b_2$, $\cdots$, $b_t$, respectively.  Place edges in each top (likewise bottom) block in the same way. Add an edge from the first vertex of the block to the last vertex of the same block.  Repeat this edge addition on the second vertex and the second to last vertex within the same block and so on within each block of both partitions. Top edges are drawn concave down and bottom edges are drawn concave up. We say that the meander is of \textit{type} $\frac{a_1|\cdots|a_m}{b_1|\cdots|b_t}$ (see Example \ref{MeanderEx}). To any seaweed $\g$, denote the meander associated with $\g$ by $M(\g)$.

\begin{ex} 
\label{MeanderEx}
Consider $\g$ of type $\frac{4|1}{2|1|2}$. The meander $M(\g)$ is illustrated in Figure \ref{MeanderinSeaweed} \textup(left\textup).
\end{ex}

A meander can be visualized inside its associated seaweed $\g$ if one views the diagonal entries $e_{i,i}$ of $\g$ as the $n$ vertices $v_i$ of the meander and reckons the top edges $(v_i,v_j)$ with $i<j$ of the meander as the unions of line segments connecting the matrix locations $(i,i)\rightarrow(j,i)\rightarrow(j,j)$ and the bottom edges $(v_i,v_j)$ with $i<j$ of the meander as the unions of line segments connecting the matrix locations $(i,i)\rightarrow(i,j)\rightarrow(j,j)$. See Figure \ref{MeanderinSeaweed} \textup(right\textup), where the asterisks represent possible nonzero entries from $\C$; blank locations are forced zeroes.

\vspace{-1em}

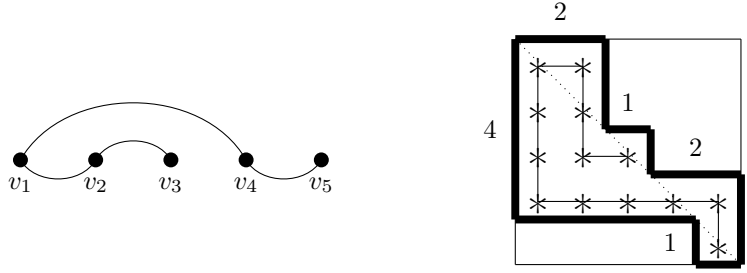
\begin{figure}[H]
$$\begin{tikzpicture}[scale=1]
\def\Node{\node [circle, fill, inner sep=2pt]}
\Node [label=below:$v_1$] at (0,0){};
\Node [label=below:$v_2$] at (1,0){};
\Node [label=below:$v_3$] at (2,0){};
\Node [label=below:$v_4$] at (3,0){};
\Node [label=below:$v_5$] at (4,0){};
\draw (0,0) to[bend left=60](3,0);
\draw (1,0) to[bend left=60](2,0);
\draw (0,0) to[bend right=60](1,0);
\draw (3,0) to[bend right=60](4,0);
\node at (0,-1.5){};
\end{tikzpicture}
\hspace{5em}
\begin{tikzpicture}[scale=0.6]
\def\Node{\node [circle, fill, inner sep=3pt]}
	\draw (0,0) -- (0,5);
	\draw (0,5) -- (5,5);
	\draw (5,5) -- (5,0);
	\draw (5,0) -- (0,0);
	\draw [line width=3](0,5) -- (2,5);
	\draw [line width=3](2,5) -- (2,3);
	\draw [line width=3](2,3) -- (3,3);
	\draw [line width=3](3,3) -- (3,2);
	\draw [line width=3](3,2) -- (5,2);
	\draw [line width=3](5,2) -- (5,0);
	\draw [line width=3](0,5) -- (0,1);
	\draw [line width=3](0,1) -- (4,1);
	\draw [line width=3](4,1) -- (4,0);
	\draw [line width=3](4,0) -- (5,0);
	\draw [dotted] (0,5) -- (5,0);
	\node at (0.5,4.2) {{\LARGE *}};
	\node at (1.5,4.2) {{\LARGE *}};
	\node at (0.5,3.2) {{\LARGE *}};
	\node at (1.5,3.2) {{\LARGE *}};
	\node at (0.5,2.2) {{\LARGE *}};
	\node at (1.5,2.2) {{\LARGE *}};
	\node at (2.5,2.2) {{\LARGE *}};
	\node at (0.5,1.2) {{\LARGE *}};
	\node at (1.5,1.2) {{\LARGE *}};
	\node at (2.5,1.2) {{\LARGE *}};
	\node at (3.5,1.2) {{\LARGE *}};
	\node at (4.5,1.2) {{\LARGE *}};
	\node at (4.5,0.2) {{\LARGE *}};
	\draw (2.5,2.4)--(1.5,2.4)--(1.5,4.4)--(0.5,4.4)--(0.5,1.4)--(4.5,1.4)--(4.5,0.4);
	\node [label=left:{4}] at (0,3){};
	\node [label=left:{1}] at (4,0.5){};
	\node [label=above:{2}] at (1,5) {};
	\node [label=above:{1}] at (2.5,3) {};
	\node [label=above:{2}] at (4,2) {};
\end{tikzpicture}$$

\vspace{-1em}

\caption{Meander of type $\frac{4|1}{2|1|2}$ \textup(left\textup) visualized in its seaweed \textup(right\textup)}
\label{MeanderinSeaweed}
\end{figure}
\end{subsection}

\begin{subsection}{The index and homotopy type of a seaweed algebra}
\label{Index}
The \textit{index} of a Lie algebra $\g$ is defined by 

$$\ind\g=\underset{f\in\g^*}{\min}\dim\ker B_f.$$

Using the meander associated with a Lie algebra, Dergachev and A. Kirillov provide a combinatorial formula for the index of $\g$ in terms of the number and type of the meander's connected components.

\begin{theorem}[Dergachev and A.  Kirillov \textbf{[8]}]
\label{Comb Formula}
If $\g$ is a seaweed subalgebra of $\mathfrak{gl}(n)$, and $M(\g)$ is its associated meander, then
$$\text{ind }\g= 2C+P,$$
where $C$ is the number of cycles and $P$ is the number of paths in $M(\g)$.
\end{theorem}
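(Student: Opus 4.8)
The plan is to realize the index as the minimal dimension of a coadjoint stabilizer and to compute that dimension using a functional tailored to the meander, so that the two invariants $C$ and $P$ enter through the two kinds of kernel vectors a single component can support. For $f\in\g^*$ the radical of $B_f$ is the coadjoint stabilizer $\g_f=\{x\in\g: f([x,\g])=0\}$, whence $\ind\g=\min_{f}\dim\g_f$, and by upper-semicontinuity of $\dim\g_f$ the minimum is the value on a Zariski-dense set. I would work in the basis $\{e_{ij}\}$ indexed by the admissible positions of $\g$, using $[e_{ij},e_{kl}]=\delta_{jk}e_{il}-\delta_{li}e_{kj}$. First I would record the structural fact that forces the dichotomy in the statement: each vertex $v_i$ lies in exactly one top block and one bottom block, and inside a block the nesting construction matches it to at most one partner, so $v_i$ carries at most one top edge and one bottom edge. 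Thus $M(\g)$ has maximum degree two and is a disjoint union of paths and cycles, which is exactly why $\ind\g$ can depend only on the counts $C$ and $P$.

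For the upper bound $\ind\g\le 2C+P$, I would choose $f_0$ supported on the off-diagonal positions corresponding to the edges of $M(\g)$, with nonzero coefficients there and zero on the diagonal, and then exhibit $2C+P$ explicit elements of $\g_{f_0}$. For each component on vertex set $S$, the diagonal indicator $h_S=\sum_{i\in S}e_{ii}$ satisfies $[h_S,e_{kl}]=(\mathbf{1}_S(k)-\mathbf{1}_S(l))\,e_{kl}$; since every position where $f_0\neq 0$ is an edge and therefore joins two vertices of a single component, one gets $f_0([h_S,e_{kl}])=0$, producing $C+P$ vectors. For each cycle I would additionally build a ``circulation'' $z$ from the edge elements $e_{ij}$ of that cycle, with signs dictated by the orientation of the edges around the cycle, and check $z\in\g_{f_0}$: the cyclic consistency of the sign pattern is precisely what lets $z$ survive, whereas along a path the analogous system is over-determined and forces the off-diagonal part to vanish. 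This gives $C$ further vectors, for a total of $2C+P$. A short linear-independence check together with a direct, component-by-component solution of the remaining kernel equations (as one verifies on the small seaweeds $\tfrac{2}{1|1}$, $\tfrac{1|2}{2|1}$, and $\tfrac{2}{2}$, where the kernels are $\langle h_S\rangle$, $\langle h_S\rangle$, and $\langle h_S,z\rangle$) shows these span $\g_{f_0}$, so $\ind\g\le\dim\g_{f_0}=2C+P$.

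It remains to prove $\ind\g\ge 2C+P$, equivalently $\operatorname{rank}B_f\le\dim\g-(2C+P)$ for every $f$, and this is the hard direction. The difficulty is that no fixed $(2C+P)$-dimensional subspace lies in $\g_f$ for all $f$ simultaneously—such a space would have to lie in the center of $\g$—so the bound cannot come from a persistent kernel and must instead be read off from the varying structure of the form. I would establish it by induction via a winding-down move that deletes an outermost edge of $M(\g)$ and passes to a smaller seaweed $\g'$, arranging the move so that $\dim\g-\dim\g'$ and the change in $2C+P$ are matched by a controlled change in $\operatorname{rank}B_f$; the base cases are the edgeless meanders, where $\g$ is abelian and the formula is immediate.

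The main obstacle, and the step I expect to fight hardest with, is this inductive comparison of the two Kirillov forms: one must show that restricting and quotienting along the deleted edge alters the generic rank by exactly the predicted amount, which amounts to producing a generically nonvanishing maximal minor (Pfaffian) of $B_f$ of the required size and checking it is also nonzero at $f_0$. Once this bookkeeping is in place—so that the edge-deletion cleanly tracks both the dimension drop and the effect on the component structure—combining it with the explicit upper bound of the previous paragraph yields $\ind\g=2C+P$.
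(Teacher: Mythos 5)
This paper does not actually prove Theorem \ref{Comb Formula} --- it is imported from Dergachev--Kirillov \textbf{[8]} --- so your attempt can only be measured against the known proof strategy, which is the reduction machinery this paper packages as Lemma \ref{winding down}. Against that standard, your proposal has a concrete error in the ``easy'' half and defers the entire content of the ``hard'' half. The error: your cycle ``circulation'' is not in general an element of $\g_{f_0}$. Take the seaweed of type $\frac{2|2}{4}$ (block upper-triangular matrices with two $2\times 2$ diagonal blocks), whose meander is a single $4$-cycle, so $2C+P=2$. The edge positions are $(2,1),(4,3)$ (top) and $(1,4),(2,3)$ (bottom), so $f_0=e_{21}^*+e_{43}^*+e_{14}^*+e_{23}^*$. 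For $z=\alpha e_{21}+\beta e_{43}+\gamma e_{14}+\delta e_{23}$ one computes $[z,e_{13}]=\alpha e_{23}$ and $[z,e_{24}]=-\beta e_{23}$, so the admissible elements $e_{13},e_{24}$ force $\alpha=\beta=0$, and then the equations from $e_{11},e_{22}$ force $\gamma=\delta=0$: no nonzero combination of the cycle's edge elements lies in the stabilizer. (The actual second stabilizer element is $e_{12}+e_{21}+e_{14}-e_{23}+e_{34}+e_{43}$, supported partly \emph{off} the cycle's edge positions; your small examples are misleading because there the edge set happens to be symmetric.) Separately, even for the genuine kernel vectors $h_S$, exhibiting elements of $\g_{f_0}$ only bounds $\dim\ker(B_{f_0})$ from \emph{below}; the inequality $\ind\g\le 2C+P$ needs the spanning statement, which you dismiss as ``a short\ldots component-by-component solution.'' That spanning claim is the real work: compare Appendix A of this paper, where determining the kernel of one explicit functional on the far simpler algebra $\gl(n)$ already requires a lengthy double induction.

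For the lower bound you name a plausible strategy (induction on an edge-deletion move, tracking the generic rank of $B_f$) but explicitly leave its crux unproven, and that crux \emph{is} the theorem: the statement that each move changes $\dim\ker(B_f)$ generically by exactly the predicted amount is precisely the index-invariance of the winding-down moves (Dergachev--Kirillov's elementary transformations, equivalently Panyushev's reduction \textbf{[16]}, recorded here as Lemma \ref{winding down}). No argument for it --- Pfaffian or otherwise --- is supplied. Note also that once that invariance is in hand, your two-sided architecture is unnecessary: one winds $M(\g)$ down, checks the elementary graph fact that the moves Bl, R, P, F preserve the numbers of cycles and paths, observes that each $C(c)$ move splits off a direct summand $\gl(c)$ of index $c$ while deleting meander components with $2C+P=c$, and concludes both inequalities at once by induction from the empty meander. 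So the proposal is not a proof: one half rests on a construction that fails already for the smallest non-symmetric cyclic seaweed, and the other half assumes exactly the lemma that the proofs in \textbf{[8]} and \textbf{[6]} exist to establish.
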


We have the following immediate Corollary.

\begin{theorem}
\label{Index Cor}
The Lie algebra $\gl(n)$ has index $n$.
\end{theorem}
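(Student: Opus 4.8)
The plan is to realize $\gl(n)$ as a seaweed in standard form and apply the Dergachev--Kirillov combinatorial formula (Theorem~\ref{Comb Formula}), so that the problem reduces to counting the connected components of the associated meander. The full matrix algebra $\gl(n)$ is the seaweed preserving the trivial pair of flags; concretely, it is the seaweed of type $\frac{n}{n}$, corresponding to the two compositions $(n)$ and $(n)$ of $n$. First I would verify that this identification is correct: a single top block of size $n$ and a single bottom block of size $n$ impose no vanishing conditions on any matrix entry, so the seaweed it defines is all of $\gl(n)$.

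Next I would describe the meander $M(\g)$ of type $\frac{n}{n}$ explicitly and count its paths and cycles. With one top block and one bottom block, the edge-construction rule pairs the outermost vertices and works inward: the top arcs join $v_i$ to $v_{n+1-i}$, and the bottom arcs do likewise. The key combinatorial step is to trace the resulting graph and see that the top and bottom nested arcs assemble into $\lceil n/2 \rceil$ connected pieces. I expect the structure to split by parity: pairs $\{v_i, v_{n+1-i}\}$ for $i < n+1-i$ each carry a top arc and a bottom arc, forming a $2$-cycle, while if $n$ is odd the central vertex $v_{(n+1)/2}$ receives no arc and is an isolated vertex, which counts as a (degenerate) path.

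With the component count in hand, I would feed it into the formula $\ind\g = 2C + P$. For $n = 2m$ even, I expect $m$ two-cycles and no paths, giving $2C + P = 2m + 0 = n$. For $n = 2m+1$ odd, I expect $m$ two-cycles and one isolated-vertex path, giving $2m + 1 = n$. In both cases the total is $n$, which is the claimed index. I would then remark that this matches the direct linear-algebra computation: the generic Kirillov form $B_f$ on $\gl(n)$ has kernel of minimal dimension $n$, reflecting the rank of a generic $n \times n$ skew-symmetric pairing built from $f$.

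The main obstacle I anticipate is not conceptual but bookkeeping: carefully establishing the parity-based component count, and in particular being precise about whether a single arc with no crossing partner yields a path of length one (two vertices, one edge) versus an isolated vertex counting as a trivial path, so that the degenerate central vertex in the odd case is correctly tallied as contributing $1$ to $P$ rather than being overlooked. Once the convention for paths and cycles in Theorem~\ref{Comb Formula} is pinned down, the count is immediate and the corollary follows directly.
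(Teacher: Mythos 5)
Your proof is correct and is essentially the argument the paper intends: Theorem \ref{Index Cor} is stated as an immediate corollary of Theorem \ref{Comb Formula}, and your identification of $\gl(n)$ with the seaweed of type $\frac{n}{n}$, whose meander consists of $\lfloor n/2\rfloor$ two-cycles plus one isolated-point path when $n$ is odd, gives exactly $2C+P=n$. The paper omits these details, and your write-up simply supplies them, including the correct convention that an isolated vertex is tallied as a (degenerate) path.
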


Any meander can be contracted, or ``wound down," to the empty meander through a sequence of graph-theoretic moves,  each of which is uniquely determined by the structure of the meander at the time of the move application.

Make note of Panychev.... 
\begin{lemma}[Coll, Hyatt, and Magnant \textbf{[6]}]
\label{winding down}
Let $\g$ be a seaweed of type $\frac{a_1|\cdots|a_m}{b_1|\cdots|b_t}$ with associated meander $M(\g)$.  Create a meander $M'$ by one of the following moves.
\begin{enumerate}
	\item {Block Elimination  \textup(Bl\textup):} If $a_1=2b_1$, then $M(\g)\mapsto M'$ of type $\frac{b_1|a_2|\cdots|a_m}{b_2|b_3|\cdots|b_t}$.
    \item {Rotation Contraction \textup(R\textup):} If $b_1<a_1<2b_1$, then $M(\g)\mapsto M'$ of type $\frac{b_1|a_2|\cdots|a_m}{(2b_1-a_1)|b_2|\cdots|b_t}$.
    \item {Pure Contraction \textup(P\textup):} If $a_1>2b_1$, then $M(\g)\mapsto M'$ of type $\frac{(a_1-2b_1)|b_1|a_2|\cdots|a_m}{b_2|b_3|\cdots|b_t}$.
    \item {Flip \textup(F\textup):} If $a_1<b_1$, then $M(\g)\mapsto M'$ of type $\frac{b_1|b_2|\cdots|b_t}{a_1|\cdots|a_m}$.
    \item {Component Deletion \textup(C\textup(c\textup)\textup):} If $a_1=b_1=c$, then $M(\g)\mapsto M'$ of type $\frac{a_2|\cdots|a_m}{b_2|\cdots|b_t}$.
\end{enumerate}
  These moves are called \textit{{winding-down moves}}.  For all moves, except the Component Deletion move, $\g$ and $\g'$ \textup(the seaweed with meander $M(\g')=M'$\textup) have the same index. 
\end{lemma}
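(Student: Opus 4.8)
The plan is to reduce everything to the combinatorial index formula $\ind\g = 2C+P$ of Theorem~\ref{Comb Formula}, in which $C$ and $P$ count the cycles and paths of the meander. Since each winding-down move alters $M(\g)$ only near its left end, it suffices to show that each of the four index-preserving moves leaves the number of cycles $C$ and the number of paths $P$ unchanged. The engine is an elementary \emph{smoothing lemma}: in any graph whose vertices all have degree at most $2$ (so that every connected component is a path or a cycle), deleting a degree-$2$ vertex $v$ and joining its two neighbors by a single edge changes neither the number of cycle-components nor the number of path-components, since $v$ lies either in the interior of a path or on a cycle and the local replacement $u-v-w \mapsto u-w$ preserves the component type in each case. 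I would prove this first, taking momentary care with the degenerate sub-cases (coincident neighbors, isolated vertices) that appear after repeated smoothing. The Flip move (F) needs nothing more: reflecting the meander across its horizontal axis interchanges top and bottom arcs and carries $M(\g)$ to $M'$, so it is a graph isomorphism and preserves $C$ and $P$ verbatim.

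For the three contraction moves (Bl, R, P) one has $b_1 \le a_1$, so the first bottom block $\{v_1,\dots,v_{b_1}\}$ sits inside the first top block $\{v_1,\dots,v_{a_1}\}$ and each of its $b_1$ vertices has degree exactly $2$, carrying one top arc $v_i \leftrightarrow v_{a_1+1-i}$ and one bottom arc $v_i \leftrightarrow v_{b_1+1-i}$. The plan is to show that $M'$ is obtained from $M(\g)$ by smoothing out precisely these $b_1$ vertices and relabeling the survivors by $v_k \mapsto v_{k-b_1}$; the smoothing lemma then gives equal $C$ and $P$, hence equal index. To verify the rerouting I would compose the two reflections $i \mapsto a_1+1-i$ and $i \mapsto b_1+1-i$ and track where an alternating top--bottom--top walk through the smoothed region exits. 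For Pure Contraction ($a_1 > 2b_1$) this cleanly splits the old first top block into an inner shell of size $a_1-2b_1$, whose arcs survive untouched to become the new first top block, and an outer shell of size $b_1$, whose arcs reassemble after smoothing into the new \emph{second} top block, matching the target type $\frac{(a_1-2b_1)|b_1|a_2|\cdots}{b_2|\cdots}$; the later bottom blocks $b_2,\dots,b_t$ are carried along unchanged. Block Elimination ($a_1 = 2b_1$) is the boundary case in which the inner shell is empty.

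I expect the main obstacle to be Rotation Contraction ($b_1 < a_1 < 2b_1$), where the top-arc images of the smoothed vertices overlap the smoothed set itself, so an alternating walk bounces inside $\{v_1,\dots,v_{b_1}\}$ several times — rotating by a step governed by $a_1-b_1$ — before it leaves. The work is to check that this cascade of smoothings produces exactly a new first bottom block of size $2b_1-a_1$ alongside the claimed new first top block of size $b_1$. This is the one step I anticipate will demand genuine bookkeeping rather than a one-line observation, and it is where the word ``rotation'' in the move's name originates.

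Finally, Component Deletion (C(c)) is handled separately and is deliberately excluded from the index claim. When $a_1 = b_1 = c$, the first $c$ vertices carry identical top and bottom pairings $v_i \leftrightarrow v_{c+1-i}$, so they form a subgraph disconnected from the remainder of the meander, consisting of $\lfloor c/2 \rfloor$ two-cycles together with a single isolated vertex when $c$ is odd. Deleting them produces $M'$ but removes genuine components, which is precisely why this move lowers the index rather than preserving it.
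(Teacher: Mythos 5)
Your overall frame is the right one: by Theorem \ref{Comb Formula} it suffices to show that each of the four index-preserving moves leaves the cycle and path counts unchanged, Flip is indeed a reflection isomorphism, and your reading of Component Deletion (the first $c$ vertices split off as $\lfloor c/2\rfloor$ two-cycles plus an isolated point when $c$ is odd) is correct. Note that the paper itself gives no proof of this lemma --- it is cited to \textbf{[6]} --- so you are competing with the literature, not with an argument in the text. For Block Elimination and Pure Contraction your plan genuinely works: smoothing the $b_1$ vertices of the first bottom block (with care for the degree-one middle vertex of an odd block, which must be deleted as a pendant rather than smoothed) does reproduce $M'$ under the shift relabeling, and your inner-shell/outer-shell description of Pure Contraction is accurate.

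The genuine gap is Rotation Contraction, and it is not just the bookkeeping you anticipate; the mechanism you propose cannot work. First, a counting obstruction: $M(\g)$ has $n$ vertices, while $M'$ of type $\frac{b_1|a_2|\cdots|a_m}{(2b_1-a_1)|b_2|\cdots|b_t}$ has $b_1+\sum_{i\geq 2}a_i=n-(a_1-b_1)$ vertices; since $b_1<a_1<2b_1$ forces $0<a_1-b_1<b_1$, smoothing away all $b_1$ vertices of the first bottom block leaves $n-b_1<n-(a_1-b_1)$ vertices, so no relabeling of the survivors can give $M'$, and in particular the smoothed graph has no room for both a new top block of size $b_1$ and a new bottom block of size $2b_1-a_1$. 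Second, the natural repair --- smoothing only the $a_1-b_1$ vertices $v_1,\dots,v_{a_1-b_1}$, which gives the correct count --- still does not produce $M'$. In the paper's own example $\frac{17|3}{10|4|6}\mapsto\frac{10|3}{3|4|6}$, smoothing $v_1,\dots,v_7$ turns the walks $v_{17}-v_1-v_{10}$ and $v_{15}-v_3-v_8$ into edges $v_{10}v_{17}$ and $v_8v_{15}$, whereas $M'$ (survivors relabeled $u_k=v_{k+7}$) has the nested arcs $u_1u_{10}=v_8v_{17}$ and $u_3u_8=v_{10}v_{15}$ instead: the smoothed arcs cross where the meander's arcs nest. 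The two graphs do have the same component structure, but establishing that equivalence in general is precisely the nontrivial ``rotation'' content of the move; it requires an additional argument (an explicit isomorphism between the smoothed graph and $M'$, or an induction in the style of Panyushev's reduction \textbf{[16]}, which the paper says this move mirrors) that the smoothing lemma alone does not supply. As written, your proof of the R case fails at the step where the cascade of smoothings is identified with $M'$.
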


Given a meander $M(\g)$, there exists a unique sequence of moves (elements of the set $\{Bl,R,P,F,C(c)\})$
which reduces $M(\g)$ to the empty meander. This sequence is called the {\textit{signature}} of $M(\g)$.
If $C(c_1),\cdots,C(c_h)$ are the component deletion moves which appear (in order) in the signature of $M(\g)$, then $M(\g)$'s {\textit{homotopy type}}, denoted $H(c_1,\cdots,c_h)$, is the meander of type $\frac{c_1|\cdots|c_h}{c_1|\cdots|c_h}$. The individual meanders of type $\frac{c_i}{c_i}$ for each $i$ are the \textit{{components}} of the homotopy type, with the numbers $c_i$ referred to as the {\textit{sizes}} of the respective components.

\begin{ex}
\label{unwinding ex}
Let $M(\g)$ be the meander of type $\frac{17|3}{10|4|6}$. By repeated applications of Lemma \ref{winding down}, $M(\g)$ has signature $RPC(4)FBC(3)$. The unwinding of $M(\g)$ is demonstrated in Figure \ref{unwinding figure}, and the  homotopy type of $M(\g)$ is $H(4,3)$.  See Figure \ref{homotopy figure}, where the meanders ``essential configuration" is illustrated graphically.
\begin{figure}[H]
$$\begin{tikzpicture}[scale=.25]
\def\Node{\node [circle, fill, inner sep=1.5pt]}
\node at (10.5,-3){$\frac{17|3}{10|4|6}$};
\Node (1) at (1,0){};
\Node (2) at (2,0){};
\Node (3) at (3,0){};
\Node (4) at (4,0){};
\Node (5) at (5,0){};
\Node (6) at (6,0){};
\Node (7) at (7,0){};
\Node (8) at (8,0){};
\Node (9) at (9,0){};
\Node (10) at (10,0){};
\Node (11) at (11,0){};
\Node (12) at (12,0){};
\Node (13) at (13,0){};
\Node (14) at (14,0){};
\Node (15) at (15,0){};
\Node (16) at (16,0){};
\Node (17) at (17,0){};
\Node (18) at (18,0){};
\Node (19) at (19,0){};
\Node (20) at (20,0){};
\draw (1) to[bend left=60] (17);
\draw (2) to[bend left=60] (16);
\draw (3) to[bend left=60] (15);
\draw (4) to[bend left=60] (14);
\draw (5) to[bend left=60] (13);
\draw (6) to[bend left=60] (12);
\draw (7) to[bend left=60] (11);
\draw (8) to[bend left=60] (10);
\draw (18) to[bend left=60] (20);
\draw (1) to[bend right=60] (10);
\draw (2) to[bend right=60] (9);
\draw (3) to[bend right=60] (8);
\draw (4) to[bend right=60] (7);
\draw (5) to[bend right=60] (6);
\draw (11) to[bend right=60] (14);
\draw (12) to[bend right=60] (13);
\draw (15) to[bend right=60] (20);
\draw (16) to[bend right=60] (19);
\draw (17) to[bend right=60] (18);
\end{tikzpicture}
\hspace{1em}
\begin{tikzpicture}[scale=.25]
\def\Node{\node [circle, fill, inner sep=1.5pt]}
\node at (-2,0){$\overset{R}{\mapsto}$};
\node at (6,-3){$\frac{10|3}{3|4|6}$};
\Node (1) at (1,0){};
\Node (2) at (2,0){};
\Node (3) at (3,0){};
\Node (4) at (4,0){};
\Node (5) at (5,0){};
\Node (6) at (6,0){};
\Node (7) at (7,0){};
\Node (8) at (8,0){};
\Node (9) at (9,0){};
\Node (10) at (10,0){};
\Node (11) at (11,0){};
\Node (12) at (12,0){};
\Node (13) at (13,0){};
\draw (1) to[bend left=60] (10);
\draw (2) to[bend left=60] (9);
\draw (3) to[bend left=60](8);
\draw (4) to[bend left=60](7);
\draw (5) to[bend left=60](6);
\draw (11) to[bend left=60](13);
\draw (1) to[bend right=60](3);
\draw (4) to[bend right=60](7);
\draw (5) to[bend right=60](6);
\draw (8) to[bend right=60](13);
\draw (9) to[bend right=60](12);
\draw (10) to[bend right=60](11);
\end{tikzpicture}
\hspace{1em}
\begin{tikzpicture}[scale=.25]
\def\Node{\node [circle, fill, inner sep=1.5pt]}
\node at (-1,0){$\overset{P}{\mapsto}$};
\node at (6,-3){$\frac{4|3|3}{4|6}$};
\Node (1) at (1,0){};
\Node (2) at (2,0){};
\Node (3) at (3,0){};
\Node (4) at (4,0){};
\Node (5) at (5,0){};
\Node (6) at (6,0){};
\Node (7) at (7,0){};
\Node (8) at (8,0){};
\Node (9) at (9,0){};
\Node (10) at (10,0){};
\draw (1) to[bend left=60] (4);
\draw (2) to[bend left=60](3);
\draw (5) to[bend left=60](7);
\draw (8) to[bend left=60](10);
\draw (1) to[bend right=60](4);
\draw (2) to[bend right=60](3);
\draw (5) to[bend right=60](10);
\draw (6) to[bend right=60](9);
\draw (7) to[bend right=60](8);
\end{tikzpicture}$$
$$\begin{tikzpicture}[scale=.25]
\def\Node{\node [circle, fill, inner sep=1.5pt]}
\node at (-1.5,0){$\overset{C(4)}{\mapsto}$};
\node at (3.5,-3){$\frac{3|3}{6}$};
\Node (1) at (1,0){};
\Node (2) at (2,0){};
\Node (3) at (3,0){};
\Node (4) at (4,0){};
\Node (5) at (5,0){};
\Node (6) at (6,0){};
\draw (1) to[bend left=60](3);
\draw (4) to[bend left=60](6);
\draw (1) to[bend right=60](6);
\draw (2) to[bend right=60](5);
\draw (3) to[bend right=60](4);
\end{tikzpicture}
\hspace{.2em}
\begin{tikzpicture}[scale=.25]
\def\Node{\node [circle, fill, inner sep=1.5pt]}
\node at (-1,0){$\overset{F}{\mapsto}$};
\node at (3.5,-3){$\frac{6}{3|3}$};
\Node (1) at (1,0){};
\Node (2) at (2,0){};
\Node (3) at (3,0){};
\Node (4) at (4,0){};
\Node (5) at (5,0){};
\Node (6) at (6,0){};
\draw (1) to[bend right=60](3);
\draw (4) to[bend right=60](6);
\draw (1) to[bend left=60](6);
\draw (2) to[bend left=60](5);
\draw (3) to[bend left=60](4);
\end{tikzpicture}
\hspace{.2em}
\begin{tikzpicture}[scale=.25]
\def\Node{\node [circle, fill, inner sep=1.5pt]}
\node at (-1,0){$\overset{B}{\mapsto}$};
\node at (2,-3){$\frac{3}{3}$};
\Node (1) at (1,0){};
\Node (2) at (2,0){};
\Node (3) at (3,0){};
\draw (1) to[bend right=60](3);
\draw (1) to[bend left=60](3);
\end{tikzpicture}
\hspace{.3em}
\begin{tikzpicture}[scale=.3]
\node at (-1,-1){$\overset{C(3)}{\mapsto}$};
\node at (1,-1){$\emptyset$};
\node at (2,-3.75){\color{white}$\frac{0}{0}$};
\end{tikzpicture}$$
\caption {Winding down the meander of type $\frac{17|3}{10|4|6}$}
\label{unwinding figure}
\end{figure}
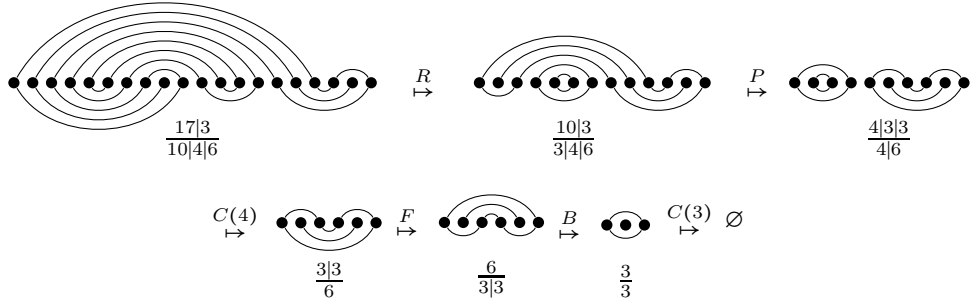
\noindent 
\vspace{-3em}
\begin{figure}[H]
$$\begin{tikzpicture}[scale=2]
\def\Node{\node [circle, fill, inner sep=1.5pt]}
	\draw (0,0) node[draw,circle,fill=white,minimum size=40pt,inner sep=0pt] (2+) {};
	\draw (0,0) node[draw,circle,fill=white,minimum size=20pt,inner sep=0pt] (2+) {};
    
	\draw (1,0) node[draw,circle,fill=white,minimum size=30,inner sep=0pt] (2+) {};
	\draw (1,0) node[draw,circle,fill=black,minimum size=4pt,inner sep=0pt] (2+) {};
\end{tikzpicture}$$
\caption{The homotopy type of $\frac{17|3}{10|4|6}$ is $H(4,3)$.}
\label{homotopy figure}
\end{figure}
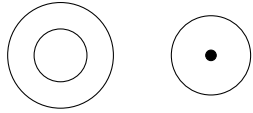
\end{ex}

\begin{theorem}
\label{Index and Homotopy}
If $\g$ is a seaweed with homotopy type $H(c_1,\cdots,c_h)$, then $\ind(\g)=\sum_{i=1}^h c_i.$
\end{theorem}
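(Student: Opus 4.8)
The plan is to follow the winding-down process of Lemma~\ref{winding down} and keep track of the Dergachev--Kirillov quantity $2C+P$ from Theorem~\ref{Comb Formula} at each step. Since every move other than Component Deletion leaves the index unchanged, and since the signature of $M(\g)$ terminates at the empty meander (index $0$), the entire index of $\g$ must be absorbed by the Component Deletion moves $C(c_1),\dots,C(c_h)$; I would show that the move $C(c_i)$ accounts for exactly $c_i$.

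First I would compute the index of a single isolated meander of type $\frac{c}{c}$. Its top and bottom blocks share the same $c$ underlying vertices, and in each block the arc-construction of Definition~\ref{seaweeddef} pairs vertex $j$ with vertex $c+1-j$. Thus every pair $\{v_j, v_{c+1-j}\}$ carries one top arc and one bottom arc and so forms a cycle, while for odd $c$ the single central vertex is isolated and counts as a (trivial) path. Applying Theorem~\ref{Comb Formula}: for even $c$ we get $C=c/2$ and $P=0$; for odd $c$ we get $C=(c-1)/2$ and $P=1$. In both cases $2C+P=c$, so a $\frac{c}{c}$ meander has index $c$.

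Next I would identify what the move $C(c)$ actually removes. It is triggered exactly when $a_1=b_1=c$, so the first top block and first bottom block occupy the very same vertices $v_1,\dots,v_c$; every arc incident to these vertices stays inside this set, so they form a connected component of $M(\g)$ that is itself a $\frac{c}{c}$ meander, disjoint from the remaining vertices $v_{c+1},\dots,v_n$. Because the counts $C$ and $P$ of a meander are the sums of the corresponding counts over its connected components, deleting this component lowers $2C+P$ by precisely $c$, by the previous paragraph.

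Finally I would assemble a telescoping argument along the signature $M(\g)=M_0\mapsto M_1\mapsto\cdots\mapsto M_N=\emptyset$. The quantity $2C+P$ is constant across each Bl, R, P, and F move, decreases by $c_i$ at the move $C(c_i)$, and equals $0$ at the terminal empty meander $M_N$. Summing these changes yields $\ind(\g)=\sum_{i=1}^h c_i$, as desired. The one genuinely load-bearing step is the claim that $C(c)$ detaches an entire $\frac{c}{c}$ component; once that is in hand, the additivity of $C$ and $P$ over connected components together with the index-invariance of the other four moves makes the conclusion immediate.
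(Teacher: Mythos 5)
Your proof is correct and is essentially the argument the paper intends: the paper states Theorem~\ref{Index and Homotopy} without a separate proof, treating it as an immediate consequence of Lemma~\ref{winding down} (index invariance of the moves Bl, R, P, F) and Theorem~\ref{Comb Formula} ($\ind = 2C+P$), which is precisely your telescoping along the signature with each $C(c_i)$ contributing $c_i$. The only flaw is terminological: for $c\geq 3$ the vertices removed by $C(c)$ form a disjoint union of $\lfloor c/2\rfloor$ two-cycles (plus an isolated vertex when $c$ is odd), i.e.\ a union of connected components rather than a single connected component as you state, but since your count $2C+P=c$ is computed over that entire union and $C$, $P$ are additive over components, the argument is unaffected.
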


In what follows we will find it useful to 
define another meander associated with a seaweed.

\begin{definition}
Given a seaweed $\g$ with signature $S$, the {component meander} $CM(\g)$ associated with $\g$ is the meander with the same signature as $\g$ except that the component deletions are all of size one.
\end{definition}

\begin{ex}
\label{compmeander}
Consider $\g$ of type $\frac{10|2|4}{16}$. The signature of $\g$ is $FRPC(2)C(4)$, and $\g$ has homotopy type $H(2,4)$. The component meander of $\g$ has signature $FRPC(1)C(1)$. By reversing the winding-down moves (see Figure \ref{cmwu}, where the path which results from the component of size four is red and the path which results from the component of size two is blue), we construct the component meander of $\mathfrak{g}$. 

\begin{figure}[H]
$$\begin{tikzpicture}[scale=0.5]
    \def\Node{\node [circle, fill, inner sep=1.5pt]}
    \node at (0,0){$\emptyset$};
    \node at (1,0.2){$\overset{C(1)}{\mapsto}$};
    \Node [red] at (2,0){};
    \node at (3,0.2){$\overset{C(1)}{\mapsto}$};
    \Node [blue] at (4,0){};
    \Node [red] at (5,0){};
    \node at (6,0.2){$\overset{P}{\mapsto}$};
    \Node (x) [red] at (7,0){};
    \Node [blue] at (8,0){};
    \Node (y) [red] at (9,0){};
    \draw [line width=0.45mm, red] (x) to[bend left=60] (y);
    
    \node at (10,0.2){$\overset{R}{\mapsto}$};
    \Node (1) [red] at (11,0){};
    \Node (A) [blue] at (12,0){};
    \Node (3) [red] at (13,0){};
    \Node (B) [blue] at (14,0){};
    \Node (2) [red] at (15,0){};
    \draw [line width=0.45mm, red] (1) to[bend left=60] (2);
    \draw [line width=0.45mm, red] (1) to[bend right=60] (3);
    \draw [line width=0.45mm, blue] (A) to[bend left=60] (B);
    \node at (16,0.2){$\overset{F}{\mapsto}$};  
    \Node (1) [red] at (17,0){};
    \Node (A) [blue] at (18,0){};
    \Node (3) [red] at (19,0){};
    \Node (B) [blue] at (20,0){};
    \Node (2) [red] at (21,0){};  
    \draw [line width=0.45mm, red] (1) to[bend right=60] (2);
    \draw [line width=0.45mm, red] (1) to[bend left=60] (3);
    \draw [line width=0.45mm, blue] (A) to[bend right=60] (B);
\end{tikzpicture}$$
\caption{Winding-up of the component meander for $\g$ of type $\frac{10|2|4}{16}$}
\label{cmwu}
\end{figure}
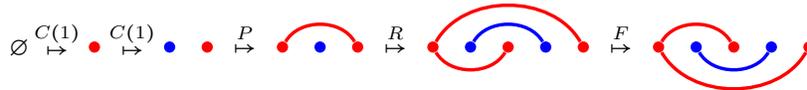

See Figure \ref{A} \textup(left\textup) which illustrates $M(\g)$ with the component of size two in blue and the component of size four in red. Figure \ref{A} \textup(right\textup) illustrates the $CM(\g)$ with the resulting paths from the components of size four and two in red and blue, respectively.
\begin{figure}[H]
$$\begin{tikzpicture}[scale=0.3]
    \def\Node{\node [circle, fill, inner sep=1.5pt]}
    \Node (1) at (1,0){};
    \Node (2) at (2,0){};
    \Node (3) at (3,0){};
    \Node (4) at (4,0){};
    \Node (5) at (5,0){};
    \Node (6) at (6,0){};
    \Node (7) at (7,0){};
    \Node (8) at (8,0){};
    \Node (9) at (9,0){};
    \Node (10) at (10,0){};
    \Node (11) at (11,0){};
    \Node (12) at (12,0){};
    \Node (13) at (13,0){};
    \Node (14) at (14,0){};
    \Node (15) at (15,0){};
    \Node (16) at (16,0){};
    
    \draw [red, line width=0.45mm] (1) to[bend left=60] (10);
    \draw [red, line width=0.45mm] (2) to[bend left=60] (9);
    \draw [red, line width=0.45mm] (3) to[bend left=60] (8);
    \draw [red, line width=0.45mm] (4) to[bend left=60] (7);
    \draw [blue, line width=0.45mm] (5) to[bend left=60] (6);
    \draw [blue, line width=0.45mm] (11) to[bend left=60] (12);
    \draw [red, line width=0.45mm] (13) to[bend left=60] (16);
    \draw [red, line width=0.45mm] (14) to[bend left=60] (15);
    \draw [red, line width=0.45mm] (1) to[bend right=60] (16);
    \draw [red, line width=0.45mm] (2) to[bend right=60] (15);
    \draw [red, line width=0.45mm] (3) to[bend right=60] (14);
    \draw [red, line width=0.45mm] (4) to[bend right=60] (13);
    \draw [blue, line width=0.45mm] (5) to[bend right=60] (12);
    \draw [blue, line width=0.45mm] (6) to[bend right=60] (11);
    \draw [red, line width=0.45mm] (7) to[bend right=60] (10);
    \draw [red, line width=0.45mm] (8) to[bend right=60] (9);

    \Node (A) at (20,0){};
    \Node (B) at (21,0){};
    \Node (C) at (22,0){};
    \Node (D) at (23,0){};
    \Node (E) at (24,0){};
    
    \draw [red, line width=0.45mm] (A) to[bend left=60] (C);
    \draw [blue, line width=0.45mm] (B) to[bend right=60] (D);
    \draw [red, line width=0.45mm] (A) to[bend right=60](E);
\end{tikzpicture}$$
\vspace{-2em}
\caption{$M(\g)$ and $CM(\g)$, where $\g$ of type $\frac{10|2|4}{16}$}
\label{A}
\end{figure}
\end{ex}

The vertices of $CM(\g)$ are $v_{A_1},\cdots,v_{A_t}$, where $A_i$ is the set of indices for the adjacent vertices that were merged into one vertex in $CM(\g)$ from $M(\g)$. The size of the subscript for $v_{A_i}$ is equal to $c_j$ for its corresponding component in the homotopy type of $\g$.

\begin{ex}
    Consider $\g$ of Example \ref{compmeander}. The vertex labels for $CM(\g)$ are $v_{\{1,2,3,4\}}$, $v_{\{5,6\}}$, $v_{\{7,8,9,10\}}$, $v_{\{11,12\}}$, and $v_{\{13,14,15,16\}}.$
\end{ex}
\end{subsection}

\begin{subsection}{Distinguished subsets of a seaweed algebra}

In this section, we highlight several important subsets of a seaweed. These subsets are defined by configurations of positions cut out of the seaweed by the components of the meander associated with the seaweed. 

\begin{definition}
\label{res}
Let $\g$ be a seaweed such that $M(\g)$ has homotopy type $H(c_1,\cdots,c_h)$. Let $\mathscr{I}_{c_i}$ represent the index set of the component $c_i$ in $M(\g)$. In other words, visualize the meander in the matrix form of $\g$ \textup(see Example \ref{restriction}\textup); $\mathscr{I}_{c_i}$ consists of each index $(j,k)$ ``covered" by an edge in the component $c_i$ of $M(\g)$. Denote by $\g|_{c_i}$ the {\textit{configuration of positions}} in the component $c_i$; that is the set of all matrices generated by $e_{j,k}$ such that $(j,k)\in\mathscr{I}_{c_i}$.
\end{definition}

\begin{ex}
\label{restriction}
Let $\g$ be the seaweed from our running Example \ref{compmeander}, and let $M(\g)$ be its associated meander. See Figure \ref{MeanderinSeaweed2} \textup(left\textup). By Lemma \ref{winding down}, the homotopy type of $\g$ is $H(2,4)$. As before, we can visualize $M(\g)$ inside of $\g$ \textup(see Figure \ref{MeanderinSeaweed2} \textup(right\textup)\textup).  The restriction of $\g$ to its individual components is the span of the matrices $e_{i,j}$, where $(i,j)$ is an index covered by the specified component in the visualization of the meander within the matrix form of the seaweed.  A seaweed might have multiple components of the same size. Further, the restriction of a seaweed to one of its components often has no additional algebraic structure; it may simply be a subspace of $\g$.
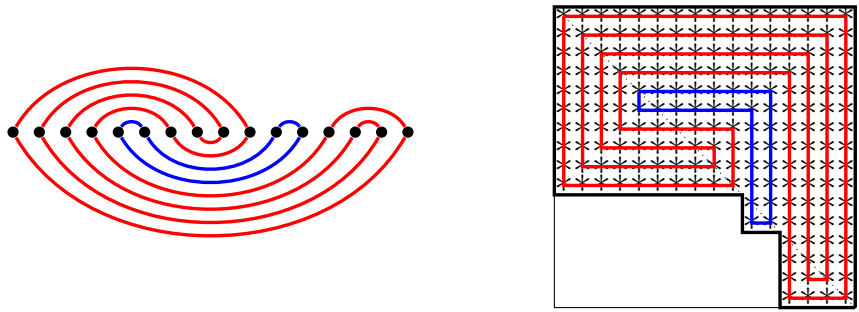
\begin{figure}[H]
$$\begin{tikzpicture}[scale=0.35]
    \def\Node{\node [circle, fill, inner sep=1.5pt]}
    \Node (1) at (0,8){};
    \Node (2) at (1,8){};
    \Node (3) at (2,8){};
    \Node (4) at (3,8){};
    \Node (5) at (4,8){};
    \Node (6) at (5,8){};
    \Node (7) at (6,8){};
    \Node (8) at (7,8){};
    \Node (9) at (8,8){};
    \Node (10) at (9,8){};
    \Node (11) at (10,8){};
    \Node (12) at (11,8){};
    \Node (13) at (12,8){};
    \Node (14) at (13,8){};
    \Node (15) at (14,8){};
    \Node (16) at (15,8){};
    
    \draw [red, line width=0.45mm] (1) to[bend left=60] (10);
    \draw [red, line width=0.45mm] (2) to[bend left=60] (9);
    \draw [red, line width=0.45mm] (3) to[bend left=60] (8);
    \draw [red, line width=0.45mm] (4) to[bend left=60] (7);
    \draw [blue, line width=0.45mm] (5) to[bend left=60] (6);
    \draw [blue, line width=0.45mm] (11) to[bend left=60] (12);
    \draw [red, line width=0.45mm] (13) to[bend left=60] (16);
    \draw [red, line width=0.45mm] (14) to[bend left=60] (15);
    \draw [red, line width=0.45mm] (1) to[bend right=60] (16);
    \draw [red, line width=0.45mm] (2) to[bend right=60] (15);
    \draw [red, line width=0.45mm] (3) to[bend right=60] (14);
    \draw [red, line width=0.45mm] (4) to[bend right=60] (13);
    \draw [blue, line width=0.45mm] (5) to[bend right=60] (12);
    \draw [blue, line width=0.45mm] (6) to[bend right=60] (11);
    \draw [red, line width=0.45mm] (7) to[bend right=60] (10);
    \draw [red, line width=0.45mm] (8) to[bend right=60] (9);
    
    \node at (0,1){};
\end{tikzpicture}
\hspace{5em}
\begin{tikzpicture}[scale=0.25]
\draw [line width=0.45mm] (16,0)--(16,16)--(0,16)--(0,6)--(10,6)--(10,4)--(12,4)--(12,0)--(16,0);
\draw (0,0)--(0,16)--(16,16)--(16,0)--(0,0);
\node at (0.5,15.3){\Large *};
\node at (0.5,14.3){\Large *};
\node at (0.5,13.3){\Large *};
\node at (0.5,12.3){\Large *};
\node at (0.5,11.3){\Large *};
\node at (0.5,10.3){\Large *};
\node at (0.5,9.3){\Large *};
\node at (0.5,8.3){\Large *};
\node at (0.5,7.3){\Large *};
\node at (0.5,6.3){\Large *};
\node at (1.5,15.3){\Large *};
\node at (1.5,14.3){\Large *};
\node at (1.5,13.3){\Large *};
\node at (1.5,12.3){\Large *};
\node at (1.5,11.3){\Large *};
\node at (1.5,10.3){\Large *};
\node at (1.5,9.3){\Large *};
\node at (1.5,8.3){\Large *};
\node at (1.5,7.3){\Large *};
\node at (1.5,6.3){\Large *};
\node at (2.5,15.3){\Large *};
\node at (2.5,14.3){\Large *};
\node at (2.5,13.3){\Large *};
\node at (2.5,12.3){\Large *};
\node at (2.5,11.3){\Large *};
\node at (2.5,10.3){\Large *};
\node at (2.5,9.3){\Large *};
\node at (2.5,8.3){\Large *};
\node at (2.5,7.3){\Large *};
\node at (2.5,6.3){\Large *};
\node at (3.5,15.3){\Large *};
\node at (3.5,14.3){\Large *};
\node at (3.5,13.3){\Large *};
\node at (3.5,12.3){\Large *};
\node at (3.5,11.3){\Large *};
\node at (3.5,10.3){\Large *};
\node at (3.5,9.3){\Large *};
\node at (3.5,8.3){\Large *};
\node at (3.5,7.3){\Large *};
\node at (3.5,6.3){\Large *};
\node at (4.5,15.3){\Large *};
\node at (4.5,14.3){\Large *};
\node at (4.5,13.3){\Large *};
\node at (4.5,12.3){\Large *};
\node at (4.5,11.3){\Large *};
\node at (4.5,10.3){\Large *};
\node at (4.5,9.3){\Large *};
\node at (4.5,8.3){\Large *};
\node at (4.5,7.3){\Large *};
\node at (4.5,6.3){\Large *};
\node at (5.5,15.3){\Large *};
\node at (5.5,14.3){\Large *};
\node at (5.5,13.3){\Large *};
\node at (5.5,12.3){\Large *};
\node at (5.5,11.3){\Large *};
\node at (5.5,10.3){\Large *};
\node at (5.5,9.3){\Large *};
\node at (5.5,8.3){\Large *};
\node at (5.5,7.3){\Large *};
\node at (5.5,6.3){\Large *};
\node at (6.5,15.3){\Large *};
\node at (6.5,14.3){\Large *};
\node at (6.5,13.3){\Large *};
\node at (6.5,12.3){\Large *};
\node at (6.5,11.3){\Large *};
\node at (6.5,10.3){\Large *};
\node at (6.5,9.3){\Large *};
\node at (6.5,8.3){\Large *};
\node at (6.5,7.3){\Large *};
\node at (6.5,6.3){\Large *};
\node at (7.5,15.3){\Large *};
\node at (7.5,14.3){\Large *};
\node at (7.5,13.3){\Large *};
\node at (7.5,12.3){\Large *};
\node at (7.5,11.3){\Large *};
\node at (7.5,10.3){\Large *};
\node at (7.5,9.3){\Large *};
\node at (7.5,8.3){\Large *};
\node at (7.5,7.3){\Large *};
\node at (7.5,6.3){\Large *};
\node at (8.5,15.3){\Large *};
\node at (8.5,14.3){\Large *};
\node at (8.5,13.3){\Large *};
\node at (8.5,12.3){\Large *};
\node at (8.5,11.3){\Large *};
\node at (8.5,10.3){\Large *};
\node at (8.5,9.3){\Large *};
\node at (8.5,8.3){\Large *};
\node at (8.5,7.3){\Large *};
\node at (8.5,6.3){\Large *};
\node at (9.5,15.3){\Large *};
\node at (9.5,14.3){\Large *};
\node at (9.5,13.3){\Large *};
\node at (9.5,12.3){\Large *};
\node at (9.5,11.3){\Large *};
\node at (9.5,10.3){\Large *};
\node at (9.5,9.3){\Large *};
\node at (9.5,8.3){\Large *};
\node at (9.5,7.3){\Large *};
\node at (9.5,6.3){\Large *};
\node at (10.5,15.3){\Large *};
\node at (10.5,14.3){\Large *};
\node at (10.5,13.3){\Large *};
\node at (10.5,12.3){\Large *};
\node at (10.5,11.3){\Large *};
\node at (10.5,10.3){\Large *};
\node at (10.5,9.3){\Large *};
\node at (10.5,8.3){\Large *};
\node at (10.5,7.3){\Large *};
\node at (10.5,6.3){\Large *};
\node at (10.5,5.3){\Large *};
\node at (10.5,4.3){\Large *};
\node at (11.5,15.3){\Large *};
\node at (11.5,14.3){\Large *};
\node at (11.5,13.3){\Large *};
\node at (11.5,12.3){\Large *};
\node at (11.5,11.3){\Large *};
\node at (11.5,10.3){\Large *};
\node at (11.5,9.3){\Large *};
\node at (11.5,8.3){\Large *};
\node at (11.5,7.3){\Large *};
\node at (11.5,6.3){\Large *};
\node at (11.5,5.3){\Large *};
\node at (11.5,4.3){\Large *};
\node at (12.5,15.3){\Large *};
\node at (12.5,14.3){\Large *};
\node at (12.5,13.3){\Large *};
\node at (12.5,12.3){\Large *};
\node at (12.5,11.3){\Large *};
\node at (12.5,10.3){\Large *};
\node at (12.5,9.3){\Large *};
\node at (12.5,8.3){\Large *};
\node at (12.5,7.3){\Large *};
\node at (12.5,6.3){\Large *};
\node at (12.5,5.3){\Large *};
\node at (12.5,4.3){\Large *};
\node at (12.5,3.3){\Large *};
\node at (12.5,2.3){\Large *};
\node at (12.5,1.3){\Large *};
\node at (12.5,0.3){\Large *};
\node at (13.5,15.3){\Large *};
\node at (13.5,14.3){\Large *};
\node at (13.5,13.3){\Large *};
\node at (13.5,12.3){\Large *};
\node at (13.5,11.3){\Large *};
\node at (13.5,10.3){\Large *};
\node at (13.5,9.3){\Large *};
\node at (13.5,8.3){\Large *};
\node at (13.5,7.3){\Large *};
\node at (13.5,6.3){\Large *};
\node at (13.5,5.3){\Large *};
\node at (13.5,4.3){\Large *};
\node at (13.5,3.3){\Large *};
\node at (13.5,2.3){\Large *};
\node at (13.5,1.3){\Large *};
\node at (13.5,0.3){\Large *};
\node at (14.5,15.3){\Large *};
\node at (14.5,14.3){\Large *};
\node at (14.5,13.3){\Large *};
\node at (14.5,12.3){\Large *};
\node at (14.5,11.3){\Large *};
\node at (14.5,10.3){\Large *};
\node at (14.5,9.3){\Large *};
\node at (14.5,8.3){\Large *};
\node at (14.5,7.3){\Large *};
\node at (14.5,6.3){\Large *};
\node at (14.5,5.3){\Large *};
\node at (14.5,4.3){\Large *};
\node at (14.5,3.3){\Large *};
\node at (14.5,2.3){\Large *};
\node at (14.5,1.3){\Large *};
\node at (14.5,0.3){\Large *};
\node at (15.5,15.3){\Large *};
\node at (15.5,14.3){\Large *};
\node at (15.5,13.3){\Large *};
\node at (15.5,12.3){\Large *};
\node at (15.5,11.3){\Large *};
\node at (15.5,10.3){\Large *};
\node at (15.5,9.3){\Large *};
\node at (15.5,8.3){\Large *};
\node at (15.5,7.3){\Large *};
\node at (15.5,6.3){\Large *};
\node at (15.5,5.3){\Large *};
\node at (15.5,4.3){\Large *};
\node at (15.5,3.3){\Large *};
\node at (15.5,2.3){\Large *};
\node at (15.5,1.3){\Large *};
\node at (15.5,0.3){\Large *};

\draw [dotted] (0,16)--(16,0);
\draw [red, line width=0.45mm] (0.5,15.5)--(15.5,15.5)--(15.5,0.5)--(12.5,0.5)--(12.5,12.5)--(3.5,12.5)--(3.5,9.5)--(9.5,9.5)--(9.5,6.5)--(0.5,6.5)--(0.5,15.5);
\draw [red, line width=0.45mm] (1.5,14.5)--(14.5,14.5)--(14.5,1.5)--(13.5,1.5)--(13.5,13.5)--(2.5,13.5)--(2.5,8.5)--(8.5,8.5)--(8.5,7.5)--(1.5,7.5)--(1.5,14.5);
\draw [blue, line width=0.45mm] (4.5,11.5)--(4.5,10.5)--(10.5,10.5)--(10.5,4.5)--(11.5,4.5)--(11.5,11.5)--(4.5,11.5);
\end{tikzpicture}$$
\caption{Meander of type $\frac{10|2|4}{16}$ \textup(left\textup), visualized in the seaweed \textup(right\textup)}
\label{MeanderinSeaweed2}
\end{figure}

\end{ex}

The following Theorem \ref{components} is a trivial consequence of Definition \ref{res}.

\begin{theorem}\label{components}
If $\g$ is a seaweed with homotopy type $H(c_1,\cdots,c_h)$, then $\g=\sum_{i=1}^h \g|_{c_i}.$
\end{theorem}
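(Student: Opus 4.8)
The plan is to establish the claimed equality of vector spaces by a double containment, where the only substantive direction is that the configurations $\g|_{c_1},\dots,\g|_{c_h}$ jointly span $\g$. Recall from Definition \ref{res} that $\g|_{c_i}$ is the span of the elementary matrices $e_{j,k}$ as $(j,k)$ ranges over $\mathscr{I}_{c_i}$, the positions covered by the component $c_i$ when $M(\g)$ is drawn inside the matrix form of $\g$. Since every position covered by an edge of $M(\g)$ is, by the very construction of that visualization, an admissible (non-forced-zero) location of $\g$, each $\g|_{c_i}$ sits inside $\g$, and hence $\sum_{i=1}^h \g|_{c_i}\subseteq \g$. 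It therefore remains to prove the reverse containment, i.e. that every basis element $e_{j,k}$ of $\g$ lies in some $\g|_{c_i}$; equivalently, that $\bigcup_{i=1}^h \mathscr{I}_{c_i}$ is the full set of admissible positions of $\g$.

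For this I would combine two facts. The first is that the homotopy-type components $c_1,\dots,c_h$ partition the edges and vertices of $M(\g)$: this is exactly the content of the coloring produced by winding the component meander $CM(\g)$ up and transporting it back to $M(\g)$ (as in Example \ref{compmeander}), so that $\bigcup_i \mathscr{I}_{c_i}$ equals the set of all positions covered by all of $M(\g)$. The second, and the heart of the argument, is a covering lemma: the hooks of the edges of $M(\g)$ cover precisely the admissible positions of $\g$. I would prove this by sorting the admissible positions according to their relation to the diagonal. A strictly lower position $(j,k)$ with $j>k$ is admissible if and only if $j$ and $k$ lie in a common top block (the bottom flag imposes no condition below the diagonal, while the top flag forces lower entries to remain inside a diagonal block); dually, a strictly upper position is admissible if and only if $j,k$ lie in a common bottom block. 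It then suffices to check block-locally that the nested top edges of a single top block of size $a$ cover every strictly lower position of that block, and symmetrically for the bottom edges.

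This block-local tiling is the one place where a short computation is unavoidable, and it is the step I expect to be the main obstacle. Writing a position of an $a$-block in local coordinates $(j',k')$ with $a\ge j'>k'\ge 1$, the edge joining $s$ to $a+1-s$ has a hook whose vertical segment occupies local column $s$ and whose horizontal segment occupies local row $a+1-s$; one then checks that $(j',k')$ is caught by the vertical segment of the edge $s=k'$ when $j'+k'\le a+1$, and by the horizontal segment of the edge $s=a+1-j'$ when $j'+k'\ge a+1$, so that every strictly lower position of the block is indeed covered. The diagonal needs a separate remark: the endpoints of every hook are diagonal cells, so $(i,i)$ is covered whenever the vertex $v_i$ meets an edge, while for a vertex that is isolated in $M(\g)$ (the center of an odd block on both top and bottom) its diagonal entry is assigned to the trivial path component carrying $v_i$. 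Assembling these observations, each admissible position is accounted for by exactly one component on the strict triangles and is handled on the diagonal, whence $\bigcup_i \mathscr{I}_{c_i}$ exhausts the admissible positions and $\g=\sum_{i=1}^h \g|_{c_i}$. Since overlaps occur only along the diagonal, the statement is genuinely an equality of the (non-direct) sum, matching the theorem as stated.
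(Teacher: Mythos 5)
Your proof is correct, and it takes essentially the paper's own approach made explicit: the paper offers no argument, declaring Theorem \ref{components} ``a trivial consequence of Definition \ref{res}'', and your block-local covering computation (edge $s=k'$ catching $(j',k')$ through its vertical segment when $j'+k'\le a+1$, edge $s=a+1-j'$ through its horizontal segment when $j'+k'\ge a+1$, with both edges existing under the stated hypotheses) is precisely the verification, taken for granted by the authors, that the meander's hooks exhaust the admissible positions. The one point worth flagging is your isolated-vertex convention: assigning $(i,i)$ to the trivial path component requires reading Definition \ref{res} so that the vertices of a component, not only its edges, cover positions — but this is clearly the intended reading, since under the literal ``covered by an edge'' phrasing the theorem itself would fail by a one-dimensional discrepancy whenever some $v_i$ is the common center of odd top and bottom blocks.
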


We now highlight two other important subsets of $\g$ called, respectively, the \textit{core} of $\g$ and \textit{peak set} of $\g$.

\begin{definition}
\label{core}
Let $\g$ be a seaweed with homotopy type $H(c_1,\cdots,c_h)$ and component meander $CM(\g)$. Consider one component $c_i$. Define the sets 

\vspace{-1em}

$$
V_{c_i}=\{A_j\;|\;v_{A_j}\text{is a vertex in $CM(\g)$ on the path of $c_i$}\},$$

\vspace{-1em}

$$\textgoth{C}_{c_i}=\{A_I\times A_I\;|\;A_I\in V_{c_i}\}.$$ 

The set $\textgoth{C}_{c_i}$ is the {core} of $\g|_{c_i}$ -- the set of $c_i\times c_i$ blocks on the diagonal of $\g$ contained in $\g|_{c_i}$. Fix a vertex $v_{A_I}$ on the path of $c_i$ in $CM(\g)$. Partition $V_{c_i}$ into two sets: 

\begin{itemize}
    \item []$\mathscr{A}_{c_i}=\{A_j\;|\;\text{ the path from $v_{A_I}$ to $v_{A_j}$ has odd length}\},$ and
    \item[] $\mathscr{B}_{c_i}=\{A_j\;|\;\text{ the path from $v_{A_I}$ to $v_{A_j}$ has even or zero length}\}.$
\end{itemize}

\noindent
Note that the choice of partitioning by distance from $V_{A_I}$ is arbitrary.  Now orient $CM(\g)$ counter-clockwise \textup(i.e., top edges are oriented from right to left, bottom edges are oriented left to right\textup). Let $E_{CM(\g)}$ be the set of edges in the oriented $CM(\g)$. Define the {peak set} of $\g|_{c_i}$ as 

$$\textgoth{P}_{c_i}=\{A_I\times A_J\;|\;A_I,A_J\in V_{c_i}\text{ with }(A_I,A_J)\in E_{CM(\g)}\}.$$

\noindent
We define the {core of $\g$} and the {peak set of $\g$} as the union of the core and peak sets, respectively, of the components in the homotopy type. In other words,

$$\textgoth{C}_\g=\bigcup_{i=1}^h\textgoth{C}_{c_i}\hspace{2em}\text{ and }\hspace{2em}\textgoth{P}_\g=\bigcup_{i=1}^h\textgoth{P}_{c_i}.$$
\end{definition}

\begin{ex}
\label{Components}
Consider once again $\g$ from Example \ref{compmeander}. Table \ref{comptab} lists $V_{c_i}$, $\textgoth{C}_{c_i}$, $\mathscr{A}_{c_i}$, $\mathscr{B}_{c_i}$, and $\textgoth{P}_{c_i}$ for $c_1=2$ and $c_2=4$. In the second and third columns of Table \ref{comptab}, $\g|_{c_i}$ is shaded to better highlight which configuration of $\g|_{c_i}$ is being identified. Further, $\textgoth{C}_{c_i}$ and $\textgoth{P}_{c_i}$ are represented as matrices with an asterisk to represent every index $(i,j)$ which could appear in $\bigcup_{C\in\textgoth{C}_{c_i}}C$ or $\bigcup_{P\in\textgoth{P}_{c_i}}P$. Each individual $c_i\times c_i$ block is a set of indices in the corresponding core or peak set.
\vspace{-0.5em}
\begin{table}[H]
\begin{center}
\scalebox{0.8}{
\begin{tabular}{|| c || c | c ||}
\hline\hline 
& $c_1=2$ & $c_2=4$ \\
\hline 
$V_{c_i}$ & $\{\{5,6\},\;\{11,12\}\}$ & $\{\{1,2,3,4\},\;\{7,8,9,10\},\;\{13,14,15,16\}\}$\\
\hline 
$\textgoth{C}_{c_i}$ && \\
& 
$\begin{tikzpicture}[scale=0.25]
    \def\Node{\node [circle, fill, inner sep=1.5pt]}
    \draw [line width=0.45mm] (0,0)--(0,16)--(16,16)--(16,0)--(0,0);
    \draw (0,16)--(0,6)--(10,6)--(10,4)--(12,4)--(12,0);
    \filldraw[line width=0.25mm, draw=black, fill=gray!10] (4,12)--(12,12)--(12,4)--(10,4)--(10,10)--(4,10)--(4,12);
    \draw [dotted] (0,16)--(16,0);
    
    \draw (4,12)--(6,12)--(6,10)--(4,10)--(4,12);
    \draw (10,6)--(12,6)--(12,4)--(10,4)--(10,6);
    \node at (4.5,11.2){\large *};
    \node at (4.5,10.2){\large *};
    \node at (5.5,11.2){\large *};
    \node at (5.5,10.2){\large *};
    
    \node at (10.5,5.2){\large *};
    \node at (10.5,4.2){\large *};
    \node at (11.5,5.2){\large *};
    \node at (11.5,4.2){\large *};
\end{tikzpicture}$ &
$\begin{tikzpicture}[scale=0.25]
    \def\Node{\node [circle, fill, inner sep=1.5pt]}
    \draw [line width=0.45mm] (0,0)--(0,16)--(16,16)--(16,0)--(0,0);
    \draw (0,16)--(0,6)--(10,6)--(10,4)--(12,4)--(12,0);
    \filldraw[line width=0.25mm, draw=black, fill=gray!10] (10,6)--(0,6)--(0,16)--(16,16)--(16,0)--(12,0)--(12,12)--(4,12)--(4,10)--(10,10)--(10,6);
    \draw [dotted] (0,16)--(16,0);
    
    \draw (0,16)--(4,16)--(4,12)--(0,12);
    \draw (6,10)--(10,10)--(10,6)--(6,6)--(6,10);
    \draw (12,4)--(16,4);
    \node at (0.5,15.2){\large *};
    \node at (0.5,14.2){\large *};
    \node at (0.5,13.2){\large *};
    \node at (0.5,12.2){\large *};
    \node at (1.5,15.2){\large *};
    \node at (1.5,14.2){\large *};
    \node at (1.5,13.2){\large *};
    \node at (1.5,12.2){\large *};
    \node at (2.5,15.2){\large *};
    \node at (2.5,14.2){\large *};
    \node at (2.5,13.2){\large *};
    \node at (2.5,12.2){\large *};
    \node at (3.5,15.2){\large *};
    \node at (3.5,14.2){\large *};
    \node at (3.5,13.2){\large *};
    \node at (3.5,12.2){\large *};
    
    \node at (6.5,9.2){\large *};
    \node at (6.5,8.2){\large *};
    \node at (6.5,7.2){\large *};
    \node at (6.5,6.2){\large *};
    \node at (7.5,9.2){\large *};
    \node at (7.5,8.2){\large *};
    \node at (7.5,7.2){\large *};
    \node at (7.5,6.2){\large *};
    \node at (8.5,9.2){\large *};
    \node at (8.5,8.2){\large *};
    \node at (8.5,7.2){\large *};
    \node at (8.5,6.2){\large *};
    \node at (9.5,9.2){\large *};
    \node at (9.5,8.2){\large *};
    \node at (9.5,7.2){\large *};
    \node at (9.5,6.2){\large *};
    
    \node at (12.5,3.2){\large *};
    \node at (12.5,2.2){\large *};
    \node at (12.5,1.2){\large *};
    \node at (12.5,0.2){\large *};
    \node at (13.5,3.2){\large *};
    \node at (13.5,2.2){\large *};
    \node at (13.5,1.2){\large *};
    \node at (13.5,0.2){\large *};
    \node at (14.5,3.2){\large *};
    \node at (14.5,2.2){\large *};
    \node at (14.5,1.2){\large *};
    \node at (14.5,0.2){\large *};
    \node at (15.5,3.2){\large *};
    \node at (15.5,2.2){\large *};
    \node at (15.5,1.2){\large *};
    \node at (15.5,0.2){\large *};
\end{tikzpicture}$\\
\hline 
$\mathscr{A}_{c_i}$ & $\{\{11,12\}\}$ & $\{\{7,8,9,10\},\;\{13,14,15,16\}\}$ \\
\hline 
$\mathscr{B}_{c_i}$ & $\{\{5,6\}\}$ & $\{\{1,2,3,4\}\}$ \\
\hline 
$\textgoth{P}_{c_i}$ &&\\
&$\begin{tikzpicture}[scale=0.25]
    \def\Node{\node [circle, fill, inner sep=1.5pt]}
    \draw [line width=0.45mm] (0,0)--(0,16)--(16,16)--(16,0)--(0,0);
    \draw (0,16)--(0,6)--(10,6)--(10,4)--(12,4)--(12,0);
    \filldraw[line width=0.25mm, draw=black, fill=gray!10] (4,12)--(12,12)--(12,4)--(10,4)--(10,10)--(4,10)--(4,12);
    \draw [dotted] (0,16)--(16,0);
    
    \draw (10,12)--(12,12)--(12,10)--(10,10)--(10,12);
    
    \node at (10.5,11.2){\large *};
    \node at (10.5,10.2){\large *};
    \node at (11.5,11.2){\large *};
    \node at (11.5,10.2){\large *};
\end{tikzpicture}$ &
$\begin{tikzpicture}[scale=0.25]
    \def\Node{\node [circle, fill, inner sep=1.5pt]}
    \draw [line width=0.45mm] (0,0)--(0,16)--(16,16)--(16,0)--(0,0);
    \draw (0,16)--(0,6)--(10,6)--(10,4)--(12,4)--(12,0);
    \filldraw[line width=0.25mm, draw=black, fill=gray!10] (10,6)--(0,6)--(0,16)--(16,16)--(16,0)--(12,0)--(12,12)--(4,12)--(4,10)--(10,10)--(10,6);
    \draw [dotted] (0,16)--(16,0);
    
    \draw (12,16)--(12,12)--(16,12);
    \draw (0,10)--(4,10)--(4,6)--(0,6);
    \node at (0.5,9.2){\large *};
    \node at (0.5,8.2){\large *};
    \node at (0.5,7.2){\large *};
    \node at (0.5,6.2){\large *};
    \node at (1.5,9.2){\large *};
    \node at (1.5,8.2){\large *};
    \node at (1.5,7.2){\large *};
    \node at (1.5,6.2){\large *};
    \node at (2.5,9.2){\large *};
    \node at (2.5,8.2){\large *};
    \node at (2.5,7.2){\large *};
    \node at (2.5,6.2){\large *};
    \node at (3.5,9.2){\large *};
    \node at (3.5,8.2){\large *};
    \node at (3.5,7.2){\large *};
    \node at (3.5,6.2){\large *};
    
    \node at (12.5,15.2){\large *};
    \node at (12.5,14.2){\large *};
    \node at (12.5,13.2){\large *};
    \node at (12.5,12.2){\large *};
    \node at (13.5,15.2){\large *};
    \node at (13.5,14.2){\large *};
    \node at (13.5,13.2){\large *};
    \node at (13.5,12.2){\large *};
    \node at (14.5,15.2){\large *};
    \node at (14.5,14.2){\large *};
    \node at (14.5,13.2){\large *};
    \node at (14.5,12.2){\large *};
    \node at (15.5,15.2){\large *};
    \node at (15.5,14.2){\large *};
    \node at (15.5,13.2){\large *};
    \node at (15.5,12.2){\large *};
\end{tikzpicture}$\\
\hline\hline 
\end{tabular}
}
\caption{$V_{c_i}$, $\textgoth{C}_{c_i}$, $\mathscr{A}_{c_i}$, $\mathscr{B}_{c_i}$, and $\textgoth{P}_{c_i}$ in $\g$ of type $\frac{10|2|4}{16}$}
\label{comptab}
\end{center}
\end{table}
\end{ex}
\end{subsection}

\section{Regular functionals on $\gl(n)$}
\label{New Functional}

In this section, we construct a regular functional $F$ on a seaweed $\g$ with homotopy type $H(c_1,\cdots,c_h)$.  We do this by developing a broad analytic framework (see Section \ref{Function Theory}) which relies on the choices of regular functionals $F_{c_i}\in\gl(c_i)^*$.   The construction of the functional $F$ involves embedding copies of $F_{c_i}$ into the core of $\mathfrak{g}$ in such a way that the constructed functional $F$ satisfies 

$$\dim\ker(B_F)=\sum_{i=1}^h\dim\ker(B_{F_{c_i}}).$$ 

The regularity of the constructed $F$ is assured by Theorem \ref{Functional Construction}, the proof for which is an induction on the winding-down moves of Lemma \ref{winding down}.  The induction makes heavy use of a {\textit{relations matrix}} (see Section \ref{Function Theory}), a bookkeeping device which encodes, among other things, the degrees of freedom in the system of the equations which define $\ker(B_F)$.

Associated with a relations matrix $B$ is a non-unique minimal set $P$ of matrix positions $(i,j)$.  The entries in the  remaining positions of $B$ are explicitly determined as linear combinations of the entries in the positions in $P$. Each assignment, $\Vec{b}=(b_{i,j} ~ \vert ~ (i,j) \in P)$, of complex numbers to the positions in $P$ yields an element of $\ker (B_F)$, so $\dim\ker(B_F)=\vert P \vert$, and the resulting kernel elements span $\ker (B_F)$.


In Section \ref{A Framework} we develop a framework for the construction of a regular functional on $\mathfrak{gl}(n)$.  The explicit functional is built in Section \ref{An Explicit Regular Functionals on gl(n)}.  We close with Section \ref{Three More Regular Functionals}, where several more explicit regular functionals on $\mathfrak{gl}(n)$ are established.







\begin{subsection}{A relations matrix of a matrix vector space}
\label{Function Theory}

Let $\g$ be a seaweed of type $\frac{a_1|\cdots|a_m}{b_1|\cdots|b_t}$. Every $F\in\g^*$ is defined in terms of the functionals $e_{i,j}^*$. We may therefore write $F$ in the form $F=\sum_{(i,j)\in\mathscr{I}_F}c_{i,j}e_{i,j}^*$, with $c_{i,j}\in\C$ and $\mathscr{I}_F\subseteq 
\{ 1, \ldots, n\} \times \{ 1, \ldots, n\}$ the {\textit{index set}} of $F$. For any matrix $B$, denote by $B^t$ the \textit{{transpose}} of $B$. Similarly, if $F=\sum_{(i,j)\in\mathscr{I}_F}c_{i,j}e_{i,j}^*$, define by $F^t$ the transpose of $F$ (i.e., $F^t=\sum_{(i,j)\in\mathscr{I}_F}c_{i,j}e_{j,i}^*$ and $\mathscr{I}_{F^t}=\{(j,i)\;|\;(i,j)\in\mathscr{I}_F\}$). We call $F$ (and similarly $\mathscr{I}_F$) {\textit{symmetric}} with respect to the main diagonal if $F=F^t$ (i.e., $\mathscr{I}_F=\mathscr{I}_{F^t}$). Using the same terminology, we call $\g$ symmetric if $\g^t:=\{X^t\;|\;X\in\g\}=\g$. This happens if and only if $\overline{a}=\overline{b}$, or equivalently if and only if $\g=\bigoplus_{i=1}^m\gl(a_i)$. Denote by $\mathscr{I}_\g$ the set of all admissible locations in $\g$ (i.e., $\g$ is the linear span of $\{e_{i,j}\;|\;(i,j)\in\mathscr{I}_\g\}$). If $F\in\g^*$ then we assume $\mathscr{I}_F\subseteq\mathscr{I}_\g$. We will use the superscript $\widehat{t}$ to represent transposition across the antidiagonal (i.e., if $F$ is defined on $\gl(n)$, then $F^{\widehat{t}}=\sum_{(i,j)\in\mathscr{I}_F}e^*_{n+1-j,n+1-i}$, etc.), and we have analogous definitions with respect to the antidiagonal. We will also use the superscript $R$ to represent rotation of a matrix twice (i.e., $B^R=(A_n)^{-1}B(A_n)$, where $A_n=\sum_{i=1}^n e_{i,n+1-i}$), and we have all the analogous definitions.

We now introduce a \textit{relations matrix} of a space of matrices, which is formally defined in Definition \ref{relations matrix}. This is an abstract bookkeeping device which encodes the dimension of a space through the degrees of freedom and an explicit basis for the space, demonstrating how entries of a matrix in a given vector space are related to each other. Therefore, a relations matrix is defined (up to a relabeling of the variables $b_i$ encoding the degrees of freedom) by a choice of basis for a space, and so is defined up to conjugation by $G\in GL(n;\C)$. Our purpose in constructing relations matrices for spaces is to infer the number of degrees of freedom of a space from it, so actual form does not matter. See example \ref{Rep Matrix}.
\begin{definition}
\label{relations matrix}
Given a subspace $\mf{q}\subseteq\gl(n)$ with $\dim\mf{q}=m$, fix a basis $\{q_1,\cdots,q_m\}$ of $\mf{q}$. Define a linear transformation $f:\C^m\rightarrow\mf{q}$ by $f(e_i)=q_i$ for each $i$. Given variables $b_1,\cdots,b_m$, the matrix form $$B:=f(b_1,\cdots,b_m)=\sum_{i=1}^mb_iq_i$$ is a {relations matrix} of $\mf{q}$. By substitution of the variables $b_i$ with elements of $\C$, the matrix $B$ satisfies the following statement: $\mf{q}=\{B\;|\;b_i\in\C\}$.
\end{definition}

\begin{ex}
\label{Rep Matrix}
Let $\mathfrak{q}\subseteq\gl(2)$ be the space of matrices $\left(\begin{array}{cc}x_1&x_2\\x_3&x_4\end{array}\right)$ subject to the constraints $x_1=x_2+x_4$ and $x_3=x_2$. A basis for the space is $$\left\{\left(\begin{array}{cc}1&1\\1&0\end{array}\right),\;\left(\begin{array}{cc}1&0\\0&1\end{array}\right)\right\}.$$ The matrix $$B=b_1q_1+b_2q_2=b_1\left(\begin{array}{cc}1&1\\1&0\end{array}\right)+b_2\left(\begin{array}{cc}1&0\\0&1\end{array}\right)=\left(\begin{array}{cc}b_1+b_2&b_1\\b_1&b_2\end{array}\right)$$ is a relations matrix of $\mf{q}$. 
\end{ex}

To facilitate the construction of a relations matrix of $\ker(B_F)$, we make use of the following technical lemmas. The first, Lemma \ref{Symmetry Lemma}, is used to shorten the necessary calculations in constructing a relations matrix by any existent symmetry in the seaweed and functional.


\begin{lemma}
\label{Symmetry Lemma}
Let $F\in\g^*$ for a seaweed $\g$ such that $F$ and $\g$ are symmetric with respect to the main diagonal \textup(or the antidiagonal\textup). Let $B=[b_{i,j}]$ be a relations matrix of $\ker(B_F)$. Let $\mathscr{B}=\{b_{i,j}\}$ be the set of free variables in $B$ -- i.e., if $\mathscr{I}$ is the set of indices in $\mathscr{B}$, then for each $(i,j)\in \mathscr{I}_\g$, there exist complex $c_{\alpha,\beta}$ such that $b_{i,j}=\sum_{(\alpha,\beta)\in\mathscr{I}} c_{\alpha,\beta}b_{\alpha,\beta}$. For all $(i,j)\in\mathscr{I}_\g$, if $b_{i,j}=\sum_{(\alpha,\beta)\in\mathscr{I}} c_{\alpha,\beta}b_{\alpha,\beta}$ with $c_{\alpha,\beta}\in\C$, then $b_{j,i}=\sum_{(\alpha,\beta)\in\mathscr{I}} c_{\alpha,\beta}b_{\beta,\alpha}$ \textup(respectively, $b_{n+1-j,n+1-i}=\sum_{(\alpha,\beta)\in\mathscr{I}}c_{\alpha_\beta}b_{n+1-\beta,n+1-\alpha}$\textup).
\end{lemma}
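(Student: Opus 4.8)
The plan is to deduce the symbolic statement about the entries of the relations matrix from the geometric fact that the subspace $\ker(B_F)$ is invariant under transposition $X \mapsto X^t$ (and, in the parenthetical case, under antitransposition $X \mapsto X^{\widehat t}$). I treat the main-diagonal case; the antidiagonal case is identical after replacing $t$ by $\widehat{t}$ throughout. First I would record what the two symmetry hypotheses buy us. Writing $F = \sum_{(i,j) \in \mathscr{I}_F} c_{i,j} e_{i,j}^*$, one has $F(M^t) = F^t(M)$ for every $M \in \gl(n)$ directly from the definitions; since $F = F^t$, this gives $F(M^t) = F(M)$. The hypothesis $\g = \g^t$ says that transposition is a linear involution of $\g$.

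The key computation is that transposition negates the Kirillov form on $\g$. Because transposition is an anti-automorphism of matrix multiplication, for $x,y \in \g$ we have $[x^t, y^t] = (yx)^t - (xy)^t = -[x,y]^t$, and hence $B_F(x^t, y^t) = F(-[x,y]^t) = -F([x,y]) = -B_F(x,y)$, using $F(M^t) = F(M)$. Now if $x \in \ker(B_F)$ and $z \in \g$ is arbitrary, then $z = y^t$ for $y := z^t \in \g$, so $B_F(x^t, z) = -B_F(x,y) = 0$; thus $x^t \in \ker(B_F)$. As transposition is an involution, $\ker(B_F)^t = \ker(B_F)$.

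Finally I would transfer this invariance to the relations matrix. By Definition \ref{relations matrix}, the entries of $B = [b_{i,j}]$ are linear forms in the free variables $\{b_{\alpha,\beta} : (\alpha,\beta) \in \mathscr{I}\}$, and the specialization map sending these variables to arbitrary complex values is a bijection onto $\ker(B_F)$. Fix $(i,j)$ together with its relation $b_{i,j} = \sum_{(\alpha,\beta) \in \mathscr{I}} c_{\alpha,\beta} b_{\alpha,\beta}$. For any specialization, the resulting matrix $X$ lies in $\ker(B_F)$, so by the invariance above $X^t \in \ker(B_F)$ as well; consequently $X^t$ satisfies the same relation at position $(i,j)$, i.e. $(X^t)_{i,j} = \sum c_{\alpha,\beta}(X^t)_{\alpha,\beta}$, which reads $X_{j,i} = \sum c_{\alpha,\beta} X_{\beta,\alpha}$. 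Since this holds for every specialization and both sides are linear in the (algebraically independent) free variables, it is an identity of the entries of $B$, namely $b_{j,i} = \sum_{(\alpha,\beta) \in \mathscr{I}} c_{\alpha,\beta} b_{\beta,\alpha}$, as claimed. For the antidiagonal variant I would only note that $X \mapsto X^{\widehat t}$ is likewise an anti-automorphism and sends position $(i,j)$ to $(n+1-j, n+1-i)$, so the same chain of reasoning produces $b_{n+1-j, n+1-i} = \sum c_{\alpha,\beta} b_{n+1-\beta, n+1-\alpha}$.

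I expect the genuine subtlety to lie in this last transfer step rather than in the form computation: one must be explicit that invariance of $\ker(B_F)$ as a subspace forces the symbolic transpose of each defining relation to be a valid relation again, which rests on the relations matrix being a faithful generic parametrization, so that an equality holding on every specialization is an equality of linear forms. The negation identity $B_F(x^t,y^t) = -B_F(x,y)$ and the verification that $F$-symmetry yields $F(M^t) = F(M)$ are both routine.
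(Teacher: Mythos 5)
Your proof is correct and takes essentially the same route as the paper: both arguments show that $\ker(B_F)$ is stable under transposition (using $\g^t=\g$ and $F^t=F$) and then apply the defining relations of the relations matrix to the transposed element to obtain $b_{j,i}=\sum_{(\alpha,\beta)\in\mathscr{I}}c_{\alpha,\beta}b_{\beta,\alpha}$. The only difference is that the paper asserts the invariance step as ``evident'' ($B^t\in\ker(B_{F^t})$, with $F^t$ defined on $\g^t$), whereas you supply the underlying computation $B_F(x^t,y^t)=-B_F(x,y)$ via the anti-automorphism property of transposition.
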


\begin{proof} 
We establish the theorem assuming symmetry across the main diagonal (the antidiagonal proof is similar.) For each $(i,j)\in\mathscr{I}_\g$, there exist $c_{\alpha,\beta}\in\C$ such that 

$$b_{i,j}=\sum_{(\alpha,\beta)\in\mathscr{I}}c_{\alpha,\beta}b_{\alpha,\beta}$$ 

\noindent
by the definition of $\mathscr{B}$. For each $B\in\ker(B_F)$, consider $B^t=[b_{i,j}']$ and note that $b_{i,j}'=b_{j,i}$. Evidently, $B^t\in\ker(B_{F^t})$, where $F^t$ is defined on $\g^t$. However, $\g^t=\g$ and $F^t=F$ by assumption. Therefore, $B^t\in\ker(B_F)$, and $$b_{j,i}=b_{i,j}'=\sum_{(\alpha,\beta)\in\mathscr{I}}c_{\alpha,\beta}b_{\alpha,\beta}'=\sum_{(\alpha,\beta)\in\mathscr{I}}c_{\alpha,\beta}b_{\beta,\alpha}. $$ 
\end{proof}


We have the following easy corollary to Lemma \ref{Symmetry Lemma}.

\begin{lemma}
\label{zero symmetry}
If $F\in\g^*$, and both $F$ and $\g$ are symmetric with respect to the main diagonal \textup(or the antidiagonal\textup), then for any relations matrix $B$ of $\ker(B_F)$, $b_{i,j}=0$ if and only if $b_{j,i}=0$ \textup(respectively, $b_{n+1-j,n+1-i}=0$\textup).
\end{lemma}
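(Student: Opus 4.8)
The plan is to read the statement directly off Lemma \ref{Symmetry Lemma} once the entries of the relations matrix are interpreted correctly. The key observation is that in any relations matrix $B$ of $\ker(B_F)$ (Definition \ref{relations matrix}), each entry $b_{i,j}$ is a linear form in the free variables $\{b_{\alpha,\beta}\mid(\alpha,\beta)\in\mathscr{I}\}$, and these free variables are by construction the independent parameters of the space. Hence the assertion ``$b_{i,j}=0$'' is to be read as the statement that this linear form vanishes identically, equivalently that every coefficient in its expansion is zero.

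First I would fix $(i,j)\in\mathscr{I}_\g$ and expand
$$b_{i,j}=\sum_{(\alpha,\beta)\in\mathscr{I}}c_{\alpha,\beta}b_{\alpha,\beta},\qquad c_{\alpha,\beta}\in\C.$$
Lemma \ref{Symmetry Lemma} then supplies the transposed expansion
$$b_{j,i}=\sum_{(\alpha,\beta)\in\mathscr{I}}c_{\alpha,\beta}b_{\beta,\alpha},$$
which carries over \emph{exactly} the same scalar coefficients $c_{\alpha,\beta}$, merely reindexing the free variables via $b_{\alpha,\beta}\mapsto b_{\beta,\alpha}$.

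For the forward implication, suppose $b_{i,j}=0$. Then every $c_{\alpha,\beta}$ vanishes, so the transposed expansion for $b_{j,i}$ is a sum of zeros and therefore $b_{j,i}=0$; no appeal to linear independence of the $b_{\beta,\alpha}$ is needed here, precisely because all coefficients are already zero. For the converse, I would apply Lemma \ref{Symmetry Lemma} to the position $(j,i)$ in place of $(i,j)$: writing $b_{j,i}=\sum_{(\alpha,\beta)\in\mathscr{I}}d_{\alpha,\beta}b_{\alpha,\beta}$ yields $b_{i,j}=\sum_{(\alpha,\beta)\in\mathscr{I}}d_{\alpha,\beta}b_{\beta,\alpha}$, so $b_{j,i}=0$ forces every $d_{\alpha,\beta}=0$ and hence $b_{i,j}=0$. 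Since transposition across the main diagonal is an involution, the two implications together give the stated equivalence.

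The antidiagonal case is handled identically, using the parenthetical clause of Lemma \ref{Symmetry Lemma}: the same argument with $(j,i)$ replaced throughout by $(n+1-j,\,n+1-i)$ shows $b_{i,j}=0$ if and only if $b_{n+1-j,n+1-i}=0$. The only point requiring care — and the nearest thing to an obstacle — is the bookkeeping interpretation that ``$b_{i,j}=0$'' refers to the vanishing of a linear form (all coefficients zero) rather than a single numerical evaluation; once this is fixed, the result is immediate from the coefficient-preserving conclusion of Lemma \ref{Symmetry Lemma}.
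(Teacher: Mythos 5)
Your proof is correct and takes essentially the same approach as the paper: the paper records Lemma \ref{zero symmetry} as an immediate corollary of Lemma \ref{Symmetry Lemma} and gives no further argument, and your write-up supplies exactly the intended details. In particular, reading ``$b_{i,j}=0$'' as identical vanishing of the linear form in the (independent) free variables, and then applying the coefficient-preserving conclusion of Lemma \ref{Symmetry Lemma} in each direction, is precisely the step the authors leave to the reader.
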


To prove that a matrix $B$ is in $\ker(B_F)$ for $F\in\g^*$ amounts to showing that the entries $b_{i,j}$ in $B$ satisfy a specific system of equations. This system is developed in Lemma \ref{Syst of Eqs}.

\begin{lemma}
\label{Syst of Eqs}
Let $\g$ be a seaweed, and let $F=\sum_{(i,j)\in\mathscr{I}_F}c_{i,j}e_{i,j}^*\in\g^*$ with $c_{i,j}\in\C$. The space $\ker(B_F)$ is spanned by all matrices $B=[b_{i,j}]$ whose entries $b_{i,j}$ form a solution to the two sets of equations:
\begin{enumerate}
    \item $\sum_{(s,j)\in\mathscr{I}_F}c_{s,j}b_{s,i}=\sum_{(i,s)\in\mathscr{I}_F}c_{i,s}b_{j,s},\hspace{3em}\text{ for all }(i,j)\in\mathscr{I}_\g;$ 
    \item $b_{i,j}=0,\hspace{14.75em}\text{ for all }(i,j)\not\in\mathscr{I}_\g.$
\end{enumerate}
\end{lemma}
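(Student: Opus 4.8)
The plan is to compute the kernel of the Kirillov form $B_F$ directly from its definition and translate the condition $B \in \ker(B_F)$ into the stated system. Recall $B_F(X,Y) = F([X,Y])$, and $\ker(B_F) = \{B \in \g \mid B_F(B,Y) = 0 \text{ for all } Y \in \g\}$. Since the pairing is linear in $Y$, it suffices to test against the basis elements $Y = e_{i,j}$ with $(i,j) \in \mathscr{I}_\g$. So the first step is to fix $B = [b_{i,j}] \in \gl(n)$ and evaluate $B_F(B, e_{i,j}) = F([B, e_{i,j}])$ for each admissible $(i,j)$, setting each to zero.

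Next I would carry out the commutator and functional computations. Using $[B, e_{i,j}] = B e_{i,j} - e_{i,j} B$, and the standard fact that $e_{i,j}$ picks out column $j$ times row $i$, one has $(B e_{i,j})_{s,t} = b_{s,i}\delta_{j,t}$ and $(e_{i,j} B)_{s,t} = \delta_{i,s} b_{j,t}$. Applying $F = \sum_{(s,t) \in \mathscr{I}_F} c_{s,t} e_{s,t}^*$ then extracts the $(s,t)$-entries weighted by $c_{s,t}$: the term $B e_{i,j}$ contributes $\sum_{(s,j) \in \mathscr{I}_F} c_{s,j} b_{s,i}$ (the entries in column $j$ that lie in $\mathscr{I}_F$), while the term $e_{i,j} B$ contributes $\sum_{(i,t) \in \mathscr{I}_F} c_{i,t} b_{j,t}$ (the entries in row $i$ that lie in $\mathscr{I}_F$). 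Setting $F([B,e_{i,j}]) = 0$ and rearranging gives exactly equation (1): $\sum_{(s,j) \in \mathscr{I}_F} c_{s,j} b_{s,i} = \sum_{(i,s) \in \mathscr{I}_F} c_{i,s} b_{j,s}$ for every $(i,j) \in \mathscr{I}_\g$ (after relabeling the row-index summation variable as $s$).

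The remaining ingredient is equation (2), which is simply the constraint that $B$ must actually lie in $\g$, not merely in $\gl(n)$: since $\g$ is the linear span of $\{e_{i,j} \mid (i,j) \in \mathscr{I}_\g\}$, membership $B \in \g$ forces $b_{i,j} = 0$ for all $(i,j) \notin \mathscr{I}_\g$. I would note that testing $B_F(B, Y) = 0$ only against $Y \in \g$ (equivalently the basis $e_{i,j}$, $(i,j)\in\mathscr{I}_\g$) is both necessary and sufficient because $B_F$ is a form on $\g$; this justifies restricting the quantifier in (1) to $(i,j) \in \mathscr{I}_\g$.

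The main obstacle, though it is more bookkeeping than conceptual, is handling the indexing carefully: one must be careful that the sums in (1) range over the \emph{column} $j$ and the \emph{row} $i$ dictated by the fixed test element $e_{i,j}$, and that the constraint $(s,j) \in \mathscr{I}_F$ versus $(i,s) \in \mathscr{I}_F$ correctly reflects which entries of $F$ survive the trace-like pairing. A clean way to organize this is to write $F([B,e_{i,j}]) = \sum_{(s,t)\in\mathscr{I}_F} c_{s,t}\,[B,e_{i,j}]_{s,t}$ and substitute the two matrix-entry formulas above, letting the Kronecker deltas collapse each sum to a single index; this avoids any sign or transposition errors and makes the symmetry between the two sides of (1) transparent.
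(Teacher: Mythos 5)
Your proposal is correct and follows essentially the same route as the paper's own proof: test $B_F(B,\cdot)$ against the basis elements $e_{i,j}$ with $(i,j)\in\mathscr{I}_\g$, compute $[B,e_{i,j}]$ entrywise (the paper writes out the column-$j$/row-$i$ matrices explicitly where you use Kronecker deltas, a purely notational difference), apply $F$, and observe that condition (2) is just membership $B\in\g$. No gaps; nothing further to add.
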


\begin{proof}
Let $B=[b_{i,j}]\in\ker(B_F)$. The second set of equations follow trivially. To show $B\in\ker(B_F)$, it is necessary and sufficient to require $B_F(B,e_{i,j})=0$, for all $(i,j)\in\mathscr{I}_\g$. Consider the image of $e_{i,j}$ under $B_F(B,\cdot)$. To start, note that
\begin{center}
\scalebox{0.75}{
\begin{tabular}{c}
$[B,e_{i,j}]=\left(\begin{array}{ccccccccc}0&0&0&\cdots&0&b_{1,i}&0&\cdots&0
\\
0&0&0&\cdots&0&b_{2,i}&0&\cdots&0\\
0&0&0&\cdots&0&b_{3,i}&0&\cdots&0\\
&&&&\vdots &&&&\\
0&0&0&\cdots&0&b_{n-1,i}&0&\cdots&0\\
0&0&0&\cdots&0&b_{n,i}&0&\cdots&0\end{array}\right)-\left(\begin{array}{ccccccc}0&0&0&\cdots&0&0&0\\
0&0&0&\cdots&0&0&0\\
&&&\vdots &&&\\
0&0&0&\cdots&0&0&0\\
b_{j,1}&b_{j,2}&b_{j,3}&\cdots &b_{j,n-2}&b_{j,n-1}&b_{j,n}\\
0&0&0&\cdots&0&0&0\\
&&&\vdots &&&\\
0&0&0&\cdots&0&0&0
\end{array}\right),$
\end{tabular}
}
\end{center}
where the above matrices are $n\times n$ matrices with a potentially non-zero column $j$ and non-zero row $i$, respectively.  It follows that
\begin{equation} 
\label{syst1}
F([B,e_{i,j}])=\sum_{(s,j)\in\mathscr{I}_F}c_{s,j}b_{s,i}-\sum_{(i,s)\in\mathscr{I}_F}c_{i,s}b_{j,s}.
\end{equation}
Upon evaluating (\ref{syst1}) at zero, the first set of equations follow. 
\end{proof}
\end{subsection}

\begin{subsection}{A framework for building regular functionals on seaweed algebras}
\label{A Framework}

To describe how we will construct a functional on a seaweed $\g$, first assume that $F_c$ represents a functional (not necessarily regular) on $\gl(c)$ for any $c>0$. The functionals $F_{c_i}$ will be our building blocks for any seaweed of homotopy type $H(c_1,\cdots,c_h)$. Several explicit examples of regular functionals are provided in Sections \ref{An Explicit Regular Functionals on gl(n)} and \ref{Three More Regular Functionals}. The indices in $\mathscr{I}_{F_c}$ will be fixed as a subset of $c\times c$. 

\begin{definition}
Given a seaweed $\g$, a functional $F\in\g^*$, and $a\in\N$, define the {shift of $F$ by $a$} as the new functional 
\begin{equation}
    \label{shift}
    F^a:=\sum_{(i,j)\in\mathscr{I}_F}e_{i+a,j+a}^*.
\end{equation}
Note: The right-hand side of \textup(\ref{shift}\textup) is defined only when the indices are admissible indices in the seaweed.
\end{definition}

\noindent
Given $\g_1\subseteq\gl(n_1)$ and $\g_2\subseteq\gl(n_2)$ with $F_i\in\g_i^*$, we define the functional $F_1\oplus F_2$ in $(\g_1\oplus\g_2)^*$ by $F_1+F_2^{n_1}$.

\begin{lemma}
\label{direct sum}
Let $\g$ be a seaweed and assume that there exist $\g_i\subseteq\gl(n_i)$ such that $\g=\bigoplus_{i=1}^k\g_i$. Let $F_i\in\g_i^*$, for all $i$ and define $F=\bigoplus_{i=1}^k F_i$. A matrix $B$ is a relations matrix of $\ker(B_F)$ if and only if $B=\bigoplus_{i=1}^k B_i,$ where $B_i$ is a relations matrix of $\ker(B_{F_i})$ for each $i$. It follows that $\dim\ker(B_F)=\sum_{i=1}^k\dim\ker(B_{F_i}).$
\end{lemma}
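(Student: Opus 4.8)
The plan is to reduce everything to the explicit description of $\ker(B_F)$ furnished by Lemma \ref{Syst of Eqs} and to show that, because $\g$ is block diagonal, the defining linear system decouples into $k$ independent subsystems, one per summand. Since $\g=\bigoplus_{i=1}^k\g_i$ realizes each $\g_i$ as a diagonal block occupying rows and columns $N_{i-1}+1,\dots,N_i$ (with $N_0=0$ and $N_i=n_1+\cdots+n_i$), both index sets are confined to these blocks: $(p,q)\in\mathscr{I}_\g$ forces $p$ and $q$ to lie in a common block, and from the definition of $F=\bigoplus_i F_i$ via shifts one has $\mathscr{I}_F=\bigsqcup_{i=1}^k\{(s+N_{i-1},t+N_{i-1}):(s,t)\in\mathscr{I}_{F_i}\}\subseteq\mathscr{I}_\g$.

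First I would invoke Lemma \ref{Syst of Eqs} for the pair $(\g,F)$. Its second family of equations, $b_{p,q}=0$ for $(p,q)\notin\mathscr{I}_\g$, immediately forces every matrix in $\ker(B_F)$ to be block diagonal. I would then examine the first family, $\sum_{(s,q)\in\mathscr{I}_F}c_{s,q}b_{s,p}=\sum_{(p,s)\in\mathscr{I}_F}c_{p,s}b_{q,s}$ for $(p,q)\in\mathscr{I}_\g$, and observe that for a fixed $(p,q)$ lying in block $\ell$, every summation index $s$ that contributes is forced into block $\ell$ as well (because $(s,q),(p,s)\in\mathscr{I}_F$ while $p,q$ lie in block $\ell$), so only the entries $b_{s,p},b_{q,s}$ of the $\ell$-th diagonal block appear. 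Hence the whole system separates into $k$ subsystems on disjoint variable sets, and the subsystem indexed by the entries of block $\ell$ is, after undoing the shift by $N_{\ell-1}$, verbatim the system that Lemma \ref{Syst of Eqs} produces for the pair $(\g_\ell,F_\ell)$.

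This yields the subspace identity $\ker(B_F)=\bigoplus_{i=1}^k\ker(B_{F_i})$, with the $i$-th summand embedded in the $i$-th diagonal block and the summands independent because their supports are disjoint. For the relations-matrix formulation I would invoke Definition \ref{relations matrix} together with the convention, stressed in the text, that a relations matrix is determined only up to a change of basis (so that only the degrees of freedom matter). Choosing a basis of $\ker(B_F)$ adapted to the decomposition — the union over $i$ of bases of the summands $\ker(B_{F_i})$ — produces a block-diagonal relations matrix $B=\bigoplus_{i=1}^k B_i$ in which each $B_i=f_i(b^{(i)}_1,\dots,b^{(i)}_{m_i})$ is a relations matrix of $\ker(B_{F_i})$ on its own set of free variables; conversely any such block-diagonal $B$ manifestly parametrizes $\bigoplus_i\ker(B_{F_i})=\ker(B_F)$. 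Counting free variables, which is independent of the chosen basis, gives $\dim\ker(B_F)=\sum_{i=1}^k m_i=\sum_{i=1}^k\dim\ker(B_{F_i})$.

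The only delicate point — and the step I would treat most carefully — is the decoupling in the second paragraph: one must verify that the index-confinement of both $\mathscr{I}_\g$ and $\mathscr{I}_F$ genuinely prevents cross-block coupling in the first family of equations, and that the residual block-$\ell$ equations coincide, up to the shift by $N_{\ell-1}$, with those of $(\g_\ell,F_\ell)$. Everything else is bookkeeping: tracking the shifts in the definition of $\bigoplus_i F_i$ and keeping the variable sets of the individual relations matrices disjoint.
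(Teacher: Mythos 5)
Your proposal is correct and follows essentially the same route as the paper: establish $\ker(B_F)=\bigoplus_{i}\ker(B_{F_i})$ and then assemble relations matrices blockwise on disjoint variable sets, with the dimension count coming from the number of free variables. If anything, your write-up is more complete than the paper's, which reduces to $k=2$ by induction and asserts the kernel decomposition ``by construction,'' whereas you justify that decomposition explicitly by showing the system of Lemma \ref{Syst of Eqs} decouples block by block.
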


\begin{proof}
By induction, it suffices to prove the claim for $\g=\g_1\oplus\g_2$ with $\g_i\subseteq\gl(n_i)$. Let $F_i\in\g_i^*$ for each $i$, and define $F=F_1\oplus F_2$. By construction, $$\ker(B_F)=\ker(B_{F_1})\oplus\ker(B_{F_2}),$$ and therefore $\dim\ker(B_F)=\dim\ker(B_{F_1})+\dim\ker(B_{F_2})$.

For the reverse direction, assume $B_i$ a relations matrix of $\ker(B_{F_i})$. By definition, there exist linear transformations $f_i:\C^{m_i}\rightarrow\ker(B_{F_i})$, where $m_i$ is the dimension of $\ker(B_{F_i})$, appropriately defined so that $B_i=f(b_1^i,\cdots,b_{m_i}^i)$. Define $f:\C^{m_1+m_2}\rightarrow\ker(B_F)$ by $$f(x_1,\cdots,x_m)=f_1(x_1,\cdots,x_{m_1})\oplus f_2(x_{m_1+1},\cdots,x_m).$$ The matrix $B=f(b_1,\cdots,b_m)=f_1(b_1,\cdots,b_{m_1})\oplus f_2(b_{m_1+1},\cdots,b_m)=B_1\oplus B_2$ is a relations matrix of $\ker(B_F)$.  The dimension result follows.  \end{proof}

\begin{theorem}
\label{ds regulars}
If $\g\subseteq\gl(n)$ is such that $\g=\bigoplus_{i=1}^k\g_i$ for $\g_i\subseteq\gl(n_i)$, and $F\in\g^*$, then $F$ is regular if and only if $F=\bigoplus_{i=1}^k F_i$ with $F_i$ regular on $\g_i$ for each $i$.
\end{theorem}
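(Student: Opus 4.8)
The plan is to reduce everything to the kernel-dimension formula of Lemma~\ref{direct sum} together with the observation that \emph{every} functional on a block-diagonal seaweed already splits along the blocks. First I would record the structural point that, since $\g=\bigoplus_{i=1}^k\g_i$ is a direct sum of block-diagonal subalgebras, its admissible index set $\mathscr{I}_\g$ is the disjoint union of the (appropriately shifted) admissible index sets of the $\g_i$; there are no admissible positions lying off the diagonal blocks. Consequently any $F=\sum_{(i,j)\in\mathscr{I}_\g}c_{i,j}e_{i,j}^*\in\g^*$ automatically decomposes as $F=\bigoplus_{i=1}^k F_i$, where $F_i\in\g_i^*$ is the restriction of $F$ to the $i$-th block. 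This is precisely what upgrades Lemma~\ref{direct sum}, which is stated only for functionals already presented in split form, into a statement about an \emph{arbitrary} $F\in\g^*$.

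Second I would establish that the index is additive over the direct sum, namely $\ind\g=\sum_{i=1}^k\ind\g_i$. The inequality $\ind\g\le\sum_i\ind\g_i$ follows by choosing a regular $G_i\in\g_i^*$ for each $i$, forming $G=\bigoplus_i G_i$, and applying Lemma~\ref{direct sum} to obtain $\dim\ker(B_G)=\sum_i\dim\ker(B_{G_i})=\sum_i\ind\g_i$. The reverse inequality $\ind\g\ge\sum_i\ind\g_i$ follows from the first paragraph: every $F\in\g^*$ splits, so Lemma~\ref{direct sum} gives $\dim\ker(B_F)=\sum_i\dim\ker(B_{F_i})\ge\sum_i\ind\g_i$, and minimizing over $F$ yields the bound.

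Finally I would deduce the equivalence. Fix $F\in\g^*$ and write $F=\bigoplus_i F_i$ as above. By Lemma~\ref{direct sum} and index additivity, $F$ is regular exactly when
\[
\sum_{i=1}^k\dim\ker(B_{F_i})=\dim\ker(B_F)=\ind\g=\sum_{i=1}^k\ind\g_i.
\]
Since $\dim\ker(B_{F_i})\ge\ind\g_i$ for every $i$, the nonnegative quantities $\dim\ker(B_{F_i})-\ind\g_i$ sum to zero, forcing each to vanish; that is, $\dim\ker(B_{F_i})=\ind\g_i$ for all $i$, so every $F_i$ is regular. Conversely, if each $F_i$ is regular the same identity gives $\dim\ker(B_F)=\sum_i\ind\g_i=\ind\g$, so $F$ is regular. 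The one genuinely delicate point — and the one I would take care to justify — is the automatic splitting of an arbitrary $F\in\g^*$, because without it Lemma~\ref{direct sum} would not control the minimum defining $\ind\g$; once that is in hand, index additivity is an immediate corollary and the equivalence is a routine "sum of nonnegatives vanishes" argument rather than a separate obstacle.
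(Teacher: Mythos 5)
Your proposal is correct, and it rests on the same two ingredients as the paper's own proof: Lemma \ref{direct sum} (additivity of kernel dimensions over block-diagonal sums) and the fact that an arbitrary $F\in\g^*$ automatically splits as $\bigoplus_{i=1}^k F_i$ because $\mathscr{I}_\g$ is the disjoint union of the blocks' admissible index sets. The difference is organizational rather than substantive. The paper argues both directions by contradiction with a swap: if $F$ is regular but some $F_j$ is not, replacing $F_j$ by a regular $F_j'$ produces a functional on $\g$ with strictly smaller kernel; conversely, if each $F_i$ is regular but $F$ is not, splitting a regular $F'$ blockwise exhibits a block on which $F_i$ fails to have minimal kernel. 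You instead isolate index additivity, $\ind\g=\sum_{i=1}^k\ind\g_i$, as an intermediate statement and then conclude from the observation that the nonnegative quantities $\dim\ker(B_{F_i})-\ind\g_i$ sum to zero only if each vanishes. Your route buys two things: it records index additivity as a reusable fact the paper never states explicitly, and it makes explicit the automatic-splitting step that the paper uses silently in both directions (when it writes ``Fix $F_i\in\g_i^*$ such that $F=\bigoplus_{i=1}^k F_i$'' for the given $F$, and again when it splits the regular $F'$). Neither point is a gap in the paper's argument, but your write-up is the more self-contained of the two.
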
 

\begin{proof}
Fix $F_i\in\g_i^*$ such that $F=\bigoplus_{i=1}^k F_i$. 

Assume that $F$ is regular and, towards a contradiction, that there exists $j$ such that $F_j$ is not regular on $\g_j$. Fix $F_j'$ regular on $\g_j$ and define $F'=\bigoplus_{i=1}^{j-1}F_i\oplus F_j'\bigoplus_{i=j+1}^kF_i.$ By definition, $\dim\ker(B_{F_j'})<\dim\ker(B_{F_j})$, and by Lemma \ref{direct sum}

\begin{align*}
\dim\ker(B_{F'})&=\sum_{\underset{i\neq j}{i=1}}^k\dim\ker(B_{F_i})+\dim\ker(B_{F_j'})\\
&<\sum_{\underset{i\neq j}{i=1}}^k\dim\ker(B_{F_i})+\dim\ker(B_{F_j})\\
&=\dim\ker(B_F).    
\end{align*}

\noindent
This contradicts the regularity of $F$.

Now, assume that $F_i$ is regular for all $i$. Again, if $F$ is not regular fix a regular $F'\in\g^*$. Let $F_i'\in\g_i^*$ be such that $F'=\bigoplus_{i=1}^k F_i'$. We have 

$$\sum_{i=1}^k\dim\ker(B_{F_i})=\dim\ker(B_F)>\dim\ker(B_{F'})=\sum_{i=1}^k\dim\ker(B_{F_i'}).$$

\noindent 
Let $j$ be the first index such that $\dim\ker(B_{F_i})>\dim\ker(B_{F_i'})$. Then $F_i$ is not regular on $\g_i$.
\end{proof}

\begin{lemma}
\label{New functions by Symmetry}
Let $\g$ be a seaweed and let $F\in\g^*$. If $\g=\g^t$, then $$\dim\ker(B_F)=\dim\ker(B_{F^t}).$$ Similarly, if $\g=\g^{\widehat{t}}$, then $$\dim\ker(B_F)=\dim\ker(B_{F^{\widehat{t}}}).$$ It follows that $F$ is regular if and only if $F^t$ \textup(respectively, $F^{\widehat{t}}$ \textup) is regular on $\g$.
\end{lemma}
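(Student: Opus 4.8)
The plan is to show that transposition gives a bijection between $\ker(B_F)$ and $\ker(B_{F^t})$ whenever $\g = \g^t$, which immediately yields the dimension equality and hence preserves regularity. The key observation is the identity relating the Kirillov forms under transposition: for any $X, Y \in \gl(n)$ one has $[X^t, Y^t] = -[X,Y]^t$, since $[X^t,Y^t] = X^tY^t - Y^tX^t = (YX)^t - (XY)^t = -(XY-YX)^t = -[X,Y]^t$. Consequently, for $F \in \g^*$ and its transpose functional $F^t$ (recall $F^t = \sum c_{i,j}e_{j,i}^*$, so that $F^t(Z) = F(Z^t)$ for all $Z$), we compute $B_{F^t}(X^t, Y^t) = F^t([X^t,Y^t]) = F\bigl(([X^t,Y^t])^t\bigr) = F\bigl((-[X,Y]^t)^t\bigr) = -F([X,Y]) = -B_F(X,Y)$.

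First I would make precise that the map $\phi: X \mapsto X^t$ is a linear isomorphism of $\g$ onto $\g^t = \g$; this is where the hypothesis $\g = \g^t$ is used, guaranteeing $\phi$ restricts to an automorphism of the underlying vector space $\g$ (it is an anti-automorphism of the Lie bracket, but only the linear vector-space structure matters for kernel dimensions). Then, using the computation above, I would argue that $X \in \ker(B_F)$ if and only if $B_F(X,Y) = 0$ for all $Y \in \g$, which by the identity holds if and only if $B_{F^t}(X^t, Y^t) = 0$ for all $Y \in \g$; since $Y \mapsto Y^t$ ranges over all of $\g$ as $Y$ does (again using $\g = \g^t$), this is equivalent to $X^t \in \ker(B_{F^t})$. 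Thus $\phi$ maps $\ker(B_F)$ bijectively onto $\ker(B_{F^t})$, and being a linear isomorphism it preserves dimension, giving $\dim\ker(B_F) = \dim\ker(B_{F^t})$.

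For the antidiagonal case with $\g = \g^{\widehat{t}}$, I would run the identical argument with $\phi$ replaced by the antidiagonal transposition $X \mapsto X^{\widehat{t}}$. Here one needs the analogous bracket identity $[X^{\widehat{t}}, Y^{\widehat{t}}] = -[X,Y]^{\widehat{t}}$, which follows because antidiagonal transposition is the composition of ordinary transposition with conjugation by the fixed permutation matrix $A_n = \sum_i e_{i,n+1-i}$ (so $X^{\widehat{t}} = A_n X^t A_n^{-1}$, and conjugation is a Lie algebra automorphism while transposition is an anti-automorphism, their composite being again an anti-automorphism). The rest of the argument transfers verbatim.

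Finally, the regularity statement is immediate: since $\g = \g^t$ implies $(\g^*)$ is stabilized by the transpose operation on functionals, and since $\ind\g = \min_{G \in \g^*}\dim\ker(B_G)$ is an invariant of $\g$, the functional $F$ realizes this minimum if and only if $F^t$ does, because they have equal kernel dimensions by the above; the same reasoning applies in the antidiagonal case. I do not expect a serious obstacle here --- the only point requiring care is bookkeeping the sign and the direction of the bijection (ensuring that $Y \mapsto Y^t$ genuinely surjects onto $\g$, which is exactly the symmetry hypothesis), but the overall sign $-1$ on the form is harmless since it does not affect the kernel.
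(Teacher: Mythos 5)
Your proof is correct, and it supplies the details for a statement the paper itself leaves unproven: the lemma is asserted without proof, with the underlying idea appearing only in the proof of Lemma \ref{Symmetry Lemma}, where transposition is said to ``evidently'' carry $\ker(B_F)$ into $\ker(B_{F^t})$. Your argument -- the anti-automorphism identity $[X^t,Y^t]=-[X,Y]^t$, the resulting relation $B_{F^t}(X^t,Y^t)=-B_F(X,Y)$, and the observation that $X\mapsto X^t$ is a linear bijection of $\g$ precisely because $\g=\g^t$ -- is exactly the justification that makes this evident, and your treatment of the antidiagonal case via $X^{\widehat{t}}=A_nX^tA_n^{-1}$ and of regularity via the minimality definition of the index is likewise sound.
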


Now, using the component meander associated with a seaweed $\g$, we describe a method for building a functional $F\in\g^*$ using functionals $F_{c_i}\in\gl(c_i)^*$ over the components $c_i$ of $\g$'s homotopy type $H(c_1,\cdots,c_h)$.

\begin{definition}
\label{Functional Construction def}
Let $\g$ be a seaweed with homotopy type $H(c_1,\cdots,c_h)$. Let $\mathscr{A}_{c_i}$ and $\mathscr{B}_{c_i}$ be defined as in Definition \ref{core}. Let $F_c^R=\sum_{(i,j)\in\mathscr{I}_{F_c}}e_{c+1-i,c+1-j}^*$ (i.e., $F_c^R$ is a rotation of the indices in $F_c$). Define sets 
\begin{center}
\scalebox{0.85}{
\begin{tabular}{c}
$\mathscr{D}^a_{c_i}=\bigcup_{P\in\textgoth{P}}\{(I'-s,J'+s)\;|\;s\in[0,c_i-1],\;\;I'=\max A_I,\;\;J'=\min A_J,\text{ for }P=A_I\times A_J\}$\\
\end{tabular}
}
\end{center}
and 
\begin{center}
\scalebox{0.85}{
\begin{tabular}{c}
$\mathscr{D}_{c_i}=\bigcup_{P\in\textgoth{P}}\{(I'+s,J'+s)\;|\;s\in[0,c_i-1],\;\;I'=\min A_I,\;\;J'=\min A_J,\text{ for }P=A_I\times A_J\}$.\\
\end{tabular}
}
\end{center}

The sets $\mathscr{D}_{c_i}^a$ and $\mathscr{D}_{c_i}$ are the entries on the antidiagonal and main diagonal (respectively) of each $c_i\times c_i$ square $A_I\times A_J$ in $\textgoth{P}_{c_i}$. Given a functional $F_{c_i}\in\gl(c_i)^*$, define functionals $\overline{F}^a_{c_i}$ and $\oF_{c_i}$ in $\g^*$ as follows:
$$\overline{F}^a_{c_i}:=\sum_{A\in\mathscr{A}_{c_i}}(F_{c_i}^R)^{\min(A)-1}+\sum_{A\in\mathscr{B}_{c_i}}(F_{c_i})^{\min(A)-1}+\sum_{(i,j)\in\mathscr{D}^a_{c_i}}e_{i,j}^*,$$ $$\oF_{c_i}:=\sum_{A\in\mathscr{A}_{c_i}\cup\mathscr{B}_{c_i}}(F_{c_i})^{\min(A)-1}+\sum_{(i,j)\in\mathscr{D}_{c_i}}e_{i,j}^*.$$ Define two functionals $\overline{F}^a,\oF\in\g^*$ by $$\overline{F}^a:=\sum_{i=1}^h\overline{F}^a_{c_i},\hspace{2em}\text{ and }\hspace{2em}\oF:=\sum_{i=1}^h\oF_{c_i}.$$
\end{definition}

The proof of Theorem \ref{Functional Construction} deals explicitly with $\oF$ and $\oF^a$ as defined, but no aspect of the proof requires the consistent choice of functionals in the peak blocks (i.e. antidiagonal as in $\oF^a$ or main diagonal as in $\oF$). It follows that these methods may be mixed between and within components of $\g$ (see Example \ref{mixmatch}).

\begin{ex}
\label{example functional}
Let $\g$ be the seaweed of our running Example \ref{compmeander}. Recall that $\g$ has type $\frac{10|4|2}{16}$ and homotopy type $H(2,4)$. Let $\overline{F}^a$ and $\oF$ be constructed using $F_2\in\gl(2)^*$ and $F_4\in\gl(4)^*$ of Theorem \ref{My Theorem 1}.

We illustrate the sets $\mathscr{I}_{\overline{F}^a}$ and $\mathscr{I}_{\oF}$ by placing a black dot in each entry $(i,j)\in\mathscr{I}_{\overline{F}}$ and $(i,j)\in\mathscr{I}_{\overline{F}^a}$ in the matrix form of $\g$ in Figure \ref{solution} \textup(left and right, respectively\textup). The configuration of positions $\g|_4$ is left shaded in grey to emphasize the embedding of the functionals $F_2$ and $F_4$ into the core and how the peak dots affect this choice. The functionals $\overline{F}_4$ and $\oF^a_4$ are the sum of $e_{i,j}^*$ where $(i,j)$ is in the shaded region, while  the functionals $\overline{F}_2$ and $\oF^a_2$ are the sum of $e_{i,j}^*$ over the indices $(i,j)$ outside the shaded region.
\begin{figure}[H]
$$\begin{tikzpicture}[scale=0.2]
    \def\Node{\node [circle, fill, inner sep=1.3pt]}
    \draw [line width=0.45mm] (0,0)--(0,16)--(16,16)--(16,0)--(0,0);
    \draw (0,16)--(0,6)--(10,6)--(10,4)--(12,4)--(12,0);
    \filldraw[line width=0.25mm, draw=black, fill=gray!10] (10,6)--(0,6)--(0,16)--(16,16)--(16,0)--(12,0)--(12,12)--(4,12)--(4,10)--(10,10)--(10,6);
    \draw [dotted] (0,16)--(16,0);
    
    \draw(6,12)--(6,10);
    \draw (10,6)--(12,6);
    \draw (0,16)--(4,16)--(4,12)--(0,12);
    \draw (6,10)--(10,10)--(10,6)--(6,6)--(6,10);
    \draw (12,4)--(16,4);
    \Node at (4.5,11.5){};
    \Node at (4.5,10.5){};
    \Node at (5.5,11.5){};
    
    \Node at (10.5,4.5){};
    \Node at (11.5,5.5){};
    \Node at (11.5,4.5){};
    
    \Node at (10.5,10.5){};
    \Node at (11.5,11.5){};
    
    \Node at (0.5,15.5){};
    \Node at (0.5,14.5){};
    \Node at (0.5,13.5){};
    \Node at (0.5,12.5){};
    \Node at (1.5,15.5){};
    \Node at (1.5,14.5){};
    \Node at (1.5,13.5){};
    \Node at (2.5,15.5){};
    \Node at (2.5,14.5){};
    \Node at (3.5,15.5){};
    
    \Node at (6.5,6.5){};
    \Node at (7.5,7.5){};
    \Node at (7.5,6.5){};
    \Node at (8.5,8.5){};
    \Node at (8.5,7.5){};
    \Node at (8.5,6.5){};
    \Node at (9.5,9.5){};
    \Node at (9.5,8.5){};
    \Node at (9.5,7.5){};
    \Node at (9.5,6.5){};
    
    \Node at (12.5,0.5){};
    \Node at (13.5,1.5){};
    \Node at (13.5,0.5){};
    \Node at (14.5,2.5){};
    \Node at (14.5,1.5){};
    \Node at (14.5,0.5){};
    \Node at (15.5,3.5){};
    \Node at (15.5,2.5){};
    \Node at (15.5,1.5){};
    \Node at (15.5,0.5){};
    
    \Node at (15.5,15.5){};
    \Node at (14.5,14.5){};
    \Node at (13.5,13.5){};
    \Node at (12.5,12.5){};
    
    \Node at (0.5,6.5){};
    \Node at (1.5,7.5){};
    \Node at (2.5,8.5){};
    \Node at (3.5,9.5){};
\end{tikzpicture}
\hspace{4em}
\begin{tikzpicture}[scale=0.2]
    \def\Node{\node [circle, fill, inner sep=1.3pt]}
    \draw [line width=0.45mm] (0,0)--(0,16)--(16,16)--(16,0)--(0,0);
    \draw (0,16)--(0,6)--(10,6)--(10,4)--(12,4)--(12,0);
    \filldraw[line width=0.25mm, draw=black, fill=gray!10] (10,6)--(0,6)--(0,16)--(16,16)--(16,0)--(12,0)--(12,12)--(4,12)--(4,10)--(10,10)--(10,6);
    \draw [dotted] (0,16)--(16,0);
    
    \draw(6,12)--(6,10);
    \draw (10,6)--(12,6);
    \draw (0,16)--(4,16)--(4,12)--(0,12);
    \draw (6,10)--(10,10)--(10,6)--(6,6)--(6,10);
    \draw (12,4)--(16,4);
    \Node at (4.5,11.5){};
    \Node at (4.5,10.5){};
    \Node at (5.5,11.5){};
    
    \Node at (10.5,4.5){};
    \Node at (10.5,5.5){};
    \Node at (11.5,5.5){};
    
    \Node at (10.5,11.5){};
    \Node at (11.5,10.5){};
    
    \Node at (0.5,15.5){};
    \Node at (0.5,14.5){};
    \Node at (0.5,13.5){};
    \Node at (0.5,12.5){};
    \Node at (1.5,15.5){};
    \Node at (1.5,14.5){};
    \Node at (1.5,13.5){};
    \Node at (2.5,15.5){};
    \Node at (2.5,14.5){};
    \Node at (3.5,15.5){};
    
    \Node at (6.5,6.5){};
    \Node at (6.5,7.5){};
    \Node at (6.5,8.5){};
    \Node at (6.5,9.5){};
    \Node at (7.5,7.5){};
    \Node at (7.5,8.5){};
    \Node at (7.5,9.5){};
    \Node at (8.5,8.5){};
    \Node at (8.5,9.5){};
    \Node at (9.5,9.5){};
    
    \Node at (12.5,0.5){};
    \Node at (12.5,1.5){};
    \Node at (12.5,2.5){};
    \Node at (12.5,3.5){};
    \Node at (13.5,1.5){};
    \Node at (13.5,2.5){};
    \Node at (13.5,3.5){};
    \Node at (14.5,2.5){};
    \Node at (14.5,3.5){};
    \Node at (15.5,3.5){};
    
    \Node at (15.5,12.5){};
    \Node at (14.5,13.5){};
    \Node at (13.5,14.5){};
    \Node at (12.5,15.5){};
    
    \Node at (0.5,9.5){};
    \Node at (1.5,8.5){};
    \Node at (2.5,7.5){};
    \Node at (3.5,6.5){};
\end{tikzpicture}$$
\caption{Constructed functionals $\oF^a$ and $\oF$ on $\g$ of type $\frac{10|4|2}{16}$}
\label{solution}
\end{figure}
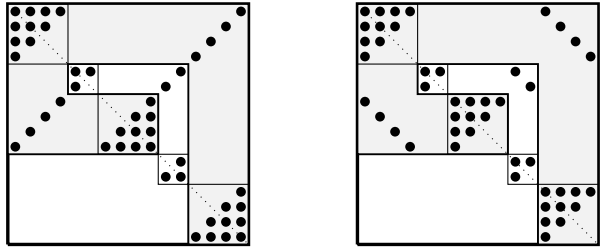
\end{ex}

\begin{theorem}
\label{Functional Construction}
Let $\g$ be a seaweed with homotopy type $H(c_1,\cdots,c_h)$, and let $F_{c_i}\in\gl(c_i)^*$ for each $i$. The functionals $\overline{F},\oF^a\in\g^*$ of Definition \ref{Functional Construction def} are such that 
\begin{equation} 
\label{dimensions}
\dim\ker(B_{\oF^a})=\dim\ker(B_{\overline{F}})=\sum_{i=1}^t\dim\ker(B_{F_{c_i}}).
\end{equation}
\end{theorem}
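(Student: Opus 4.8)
The plan is to prove the dimension identity (\ref{dimensions}) by induction on the length of the signature of $\g$, i.e.\ on the number of winding-down moves of Lemma \ref{winding down} needed to reduce $M(\g)$ to the empty meander. Regularity of $\overline{F}$ and $\overline{F}^a$ is then immediate when each $F_{c_i}$ is chosen regular, since $\dim\ker(B_{F_{c_i}})=\ind\gl(c_i)=c_i$ by Theorem \ref{Index Cor}, whence $\sum_{i=1}^h\dim\ker(B_{F_{c_i}})=\sum_{i=1}^h c_i=\ind\g$ by Theorem \ref{Index and Homotopy}. Throughout I would work with the relations matrix $B=[b_{i,j}]$ of $\ker(B_{\overline{F}})$ furnished by Lemma \ref{Syst of Eqs}: its free variables count $\dim\ker(B_{\overline{F}})$, so the entire argument reduces to tracking how that free-variable count behaves under a single winding-down move.

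For the base case I would take $\g$ equal to its own homotopy type, $\frac{c_1|\cdots|c_h}{c_1|\cdots|c_h}$. Here $\overline{a}=\overline{b}$, so $\g=\bigoplus_{i=1}^h\gl(c_i)$; the construction of Definition \ref{Functional Construction def} degenerates, as each component path in $CM(\g)$ has length zero, so the peak sets are empty, each core block is an entire $\gl(c_i)$-summand, and $\overline{F}=\bigoplus_{i=1}^h F_{c_i}$. The identity is then exactly Lemma \ref{direct sum}. The Component Deletion move $C(c)$ is handled in the same spirit: when $a_1=b_1=c$ the leading block splits $\g$ as an algebra direct sum $\gl(c)\oplus\g'$, the constructed functional restricts as $F_c\oplus\overline{F}_{\g'}$, and Lemma \ref{direct sum} peels off the summand $\dim\ker(B_{F_c})$, exactly matching the loss of one component from the homotopy type.

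The inductive heart consists of the three contraction moves (Block Elimination, Rotation Contraction, Pure Contraction) together with the Flip. For the Flip I would use the transpose correspondence $B\mapsto B^t$ underlying Lemma \ref{Symmetry Lemma}: a Flip realizes $\g'=\g^t$, and transposition carries $\ker(B_{\overline{F}_\g})$ isomorphically onto $\ker(B_{\overline{F}_{\g'}})$ once one checks that the construction commutes with the top/bottom swap, so this case needs no new dimension count. For each contraction move, producing $\g'$ of strictly shorter signature and the \emph{same} homotopy type (hence the same building blocks $F_{c_i}$), the essential task is to compare the systems of equations of Lemma \ref{Syst of Eqs} for $\g$ and for $\g'$. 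The positions of $\mathscr{I}_\g\setminus\mathscr{I}_{\g'}$ form a controlled band adjacent to the leading block; using the forced zeros (the second set of equations) together with the first set evaluated against the core and peak entries of $\overline{F}$, I would show that every variable $b_{i,j}$ indexed in this band is either forced to zero or determined as a linear combination of the surviving variables. This establishes a bijection between the free variables of the relations matrix of $\g$ and those of $\g'$, giving $\dim\ker(B_{\overline{F}_\g})=\dim\ker(B_{\overline{F}_{\g'}})$, and the induction closes.

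The main obstacle I anticipate is precisely this collapse of the band in the Rotation Contraction and Pure Contraction cases. The delicate point is verifying that the peak-block entries $\mathscr{D}_{c_i}$ (respectively $\mathscr{D}^a_{c_i}$), inserted on the main diagonal (respectively antidiagonal) of the off-diagonal blocks linking consecutive core blocks along the path in $CM(\g)$, supply nondegenerate coefficients in exactly the right equations to pivot away the extra band variables without disturbing the $\g'$-relations. One must check this for all index shifts $\min(A)-1$ simultaneously and for both the $\overline{F}$ and $\overline{F}^a$ constructions, thereby obtaining the common value $\dim\ker(B_{\overline{F}})=\dim\ker(B_{\overline{F}^a})$. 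Here the symmetry tools of Lemma \ref{Symmetry Lemma} and Lemma \ref{zero symmetry} would halve the casework by exploiting the diagonal/antidiagonal symmetry of the core blocks, and the remark following Definition \ref{Functional Construction def} — that peak choices may be made independently within and between components — is what permits a single band analysis to cover both functionals at once.
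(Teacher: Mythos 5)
Your proposal follows the same skeleton as the paper's proof: induction on the winding-down moves of Lemma \ref{winding down}, with Component Deletion dispatched by the direct-sum Lemma \ref{direct sum}, Flip by transposition of the relations matrix, and the contraction moves as the inductive heart (the paper, like you, reduces Rotation and Pure Contraction to Block Elimination by a relabeling of indices). The problem is that at the contraction step --- the only place where the theorem has real content --- you record what must be shown rather than showing it: you assert that every band variable in $\mathscr{I}_\g\setminus\mathscr{I}_{\g'}$ is ``either forced to zero or determined as a linear combination of the surviving variables,'' and you yourself flag the verification of this as ``the main obstacle I anticipate.'' No mechanism for the collapse is supplied, and the needed mechanism is not a one-pass pivot. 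Indeed, an inductive hypothesis consisting only of a free-variable count (or even a bijection of free variables) is too weak to run the step: when one reverses a Block Elimination, the equations of Lemma \ref{Syst of Eqs} that govern the newly created entries involve the values of the previous kernel at \emph{peak} positions of the smaller seaweed, and those values are not free variables --- to evaluate them one must know the structural form of the smaller kernel, not merely its dimension.

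Concretely, the paper closes the induction with three ingredients, none of which appear in your plan. First, evaluating $B_{F'}([B,\cdot])$ on the basis elements of the new block yields equation (\ref{maps}), so the newly created core block is an exact copy (for $\oF$; a rotated copy for $\oF^a$) of an existing core block and contributes no new free variables. Second, combining the equations arising from (\ref{--}) and (\ref{++}) produces (\ref{peaks}), whose nonzero coefficient $-2$ is exactly the ``nondegeneracy'' you worried about; this expresses each new peak block's entries in terms of entries of peak blocks lying further out along the component, setting up a recursion across nested peak blocks rather than an elimination against surviving variables. Third, that recursion needs a terminal case: the outermost peak block, i.e.\ a seaweed of type $\frac{2n+m}{n|m|n}$, which the paper settles by a direct six-case computation showing that the peak entries vanish and that a relations matrix of the kernel is $B_1\oplus B_2\oplus B_1$ for $\oF$ (respectively $B_1\oplus B_2\oplus B_1^R$ for $\oF^a$), where $B_1$ and $B_2$ are relations matrices of $\ker(B_{F_n})$ and $\ker(B_{G_m})$. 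In effect the induction that works proves the stronger structural statement recorded as Theorem \ref{kernel} --- the kernel is supported on the core blocks, in rotated copies --- and feeds that structure, not just a dimension, into the next step. Until you supply the peak-block recursion and the $\frac{2n+m}{n|m|n}$ base computation (or an equivalent), your proposal remains a correct strategy statement with its key step unproven.
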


Assuming Theorem \ref{Functional Construction} for the moment, we have the following immediate Corollary.

\begin{theorem}
The functionals $\oF$ and $\oF^a$ in Definition \ref{Functional Construction def} are regular if and only if $F_{c_i}$ is regular on $\gl(c_i)$, for each $i$.
\end{theorem}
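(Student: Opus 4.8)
The plan is to read the result off directly from Theorem \ref{Functional Construction} by pairing its dimension count against the index formula of Theorem \ref{Index and Homotopy}. First I would invoke the definition of the index as a minimum over $\g^*$: every functional satisfies $\dim\ker(B_F)\ge\ind\g$, and $F$ is regular precisely when equality holds. Applied to each building block, Theorem \ref{Index Cor} gives $\ind\gl(c_i)=c_i$, so $\dim\ker(B_{F_{c_i}})\ge c_i$ for every $i$, with $F_{c_i}$ regular on $\gl(c_i)$ exactly when this inequality is an equality.

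Next I would combine the two global dimension identities. By Theorem \ref{Functional Construction},
$$\dim\ker(B_{\oF^a})=\dim\ker(B_{\oF})=\sum_{i=1}^h\dim\ker(B_{F_{c_i}}),$$
while Theorem \ref{Index and Homotopy} gives $\ind\g=\sum_{i=1}^h c_i$. Subtracting term by term,
$$\dim\ker(B_{\oF})-\ind\g=\sum_{i=1}^h\bigl(\dim\ker(B_{F_{c_i}})-c_i\bigr),$$
and the key observation is that each summand on the right is nonnegative by the lower bound established above. Hence the left-hand side vanishes — i.e. $\oF$ (and identically $\oF^a$) is regular — if and only if every summand vanishes, which is exactly the condition that each $F_{c_i}$ be regular on $\gl(c_i)$. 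Both directions of the biconditional fall out of this single identity.

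I expect essentially no obstacle here: the corollary is a purely formal consequence of Theorem \ref{Functional Construction} together with the additivity of the index over the homotopy components. The only point requiring a moment's care is the elementary "equality in a sum of nonnegative terms forces equality in each term" step, which is immediate once the lower bound $\dim\ker(B_{F_{c_i}})\ge c_i$ is in place. All of the genuine content — that the embedded functional exactly realizes $\sum_i\dim\ker(B_{F_{c_i}})$ with no loss of degrees of freedom — is already carried by Theorem \ref{Functional Construction}, whose proof runs by induction on the winding-down moves of Lemma \ref{winding down}.
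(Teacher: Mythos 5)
Your proposal is correct and is precisely the argument the paper has in mind: the paper states this result as an ``immediate Corollary'' of Theorem \ref{Functional Construction}, the immediacy coming from pairing the dimension identity with $\ind\g=\sum_{i=1}^h c_i$ (Theorem \ref{Index and Homotopy}) and $\ind\gl(c_i)=c_i$ (Theorem \ref{Index Cor}), exactly as you do. Your explicit ``sum of nonnegative deficits vanishes iff each deficit vanishes'' step is the right way to make both directions of the biconditional rigorous, and nothing further is needed.
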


\begin{proof}[Proof of Theorem \ref{Functional Construction}]
The proof is an induction on the winding-down moves of Lemma \ref{winding down}.  For the entirety of this proof, let $\g$ be a seaweed with signature $S$ and $F\in\g^*$ be constructed according to Definition \ref{Functional Construction def}. Let $\g'$ be the seaweed with signature $C(c)S$. The functional constructed by Definition \ref{Functional Construction def} on $\g'$ is $F_c\oplus F$, and the dimension result of equation (\ref{dimensions}) follows trivially. Further, if $B$ is a relations matrix of $\ker(B_F)$ and $B_c$ is a relations matrix of $\ker(B_{F_c})$, then $B_c\oplus B$ is a relations matrix of $\ker(B_{F_c\oplus F})$. 

Theorem \ref{Functional Construction} follows trivially if $\g'$ is of signature $FS$. The functional constructed on $\g'$ by Definition \ref{Functional Construction def} is $F^t$, and $B^t$ is a relations matrix of $\ker(B_{F^t})$. 

Now, assume that $\g'$ has signature $BlS$ and that $\g$ is of type $\frac{a_1|\cdots|a_m}{b_1|\cdots|b_t}$ (the Rotation Contraction move and the Pure Contraction move only require an appropriate relabeling of indices). Without loss of generality, assume that the block $a_1$ in the meander $M(\g)$ associated with $\g$ is part of a single component of size $a_1$ -- the argument for multiple components is a finite number of arguments identical to the following argument. Given $F'$ constructed by Definition \ref{Functional Construction def}, the functional $F$ must be $\underset{i,j>a_1}{\sum_{(i,j)\in\mathscr{I}_{F'}}}e_{i,j}^*$. Let $B'$ be a relations matrix of $F'$ and consider the following division of $B'$ into four quadrants, whose indices $(i,j)$ are relabeled as indicated.

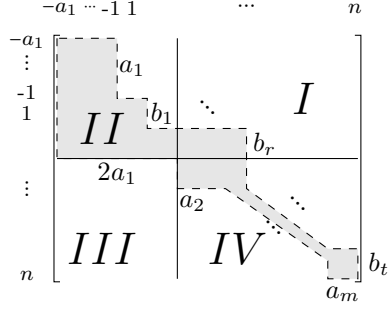
\begin{figure}[H]
$$\begin{tikzpicture}[scale=0.8]
\draw (-1.95,2.05)--(-2.05,2.05)--(-2.05,-2.05)--(-1.95,-2.05);
\draw (2.95,2.05)--(3.05,2.05)--(3.05,-2.05)--(2.95,-2.05);
\filldraw[draw=black, fill=gray!20, dashed] (-2,2)--(-2,0)--(0,0)--(0,-0.5)--(0.8,-0.5)--(2.5,-1.7)--(2.5,-2)--(3,-2)--(3,-1.5)--(2.5,-1.5)--(1.15,-0.5)--(1.15,0.5)--(-0.5,0.5)--(-0.5,1)--(-1,1)--(-1,2)--(-2,2);
\node at (-1,-0.25){$2a_1$};
\node at (0.25,-0.75){$a_2$};
\node at (1.6,-1.15){$\ddots$};
\node at (2.75,-2.25){$a_m$};
\node at (-0.75,1.5){$a_1$};
\node at (-0.25,0.75){$b_1$};
\node at (0.5,0.85){$\ddots$};
\node at (1.45,0.25){$b_r$};
\node at (2,-0.75){$\ddots$};
\node at (3.35,-1.75){$b_t$};
\node at (2.1,1){\LARGE $I$};
\node at (-1.25,0.5){\LARGE $II$};
\node at (-1.25,-1.5){\LARGE $III$};
\node at (1,-1.5){\LARGE $IV$};

\draw (-2,0)--(3,0);
\draw (0,-2)--(0,2);

\node at (-2.5,1.95){\footnotesize $-a_1$};
\node at (-2.5,1.6){\footnotesize $\vdots$};
\node at (-2.5,1.1){\footnotesize -1};
\node at (-2.5,0.75){\footnotesize 1};
\node at (-2.5,-0.5){\footnotesize $\vdots$};
\node at (-2.5,-1.95){\footnotesize $n$};

\node at (-1.95,2.5){\footnotesize{$-a_1$}};
\node at (-1.45,2.5){\tiny{$\cdots$}};
\node at (-1.1,2.5){\footnotesize{-1}};
\node at (-0.75,2.5){\footnotesize{1}};
\node at (1.15,2.5){\footnotesize{$\cdots$}};
\node at (2.95,2.5){\footnotesize{$n$}};
\end{tikzpicture}$$
\caption{The four quadrants of $B'$}
\label{quadrants}
\end{figure}

Assume $F'=\oF$. Now, fix $i,j\in[1,n]$ and consider the images of the basis elements $e_{-i,j}$, $e_{i,j}$, and $e_{i,-j}$ (basis elements in quadrant $II$). We get the following three expressions under the map $B_{F'}([B,\cdot])$.
\begin{equation}
    \label{--}
    e_{-i,-j}\mapsto\left(\sum_{(s,-j)\in\mathscr{I}_{F'}}b_{s,-i}-\sum_{(-i,s)\in\mathscr{I}_{F'}}b_{-j,s}\right)
\end{equation}
\begin{equation}
    \label{+-}
    e_{i,-j}\mapsto\left(\sum_{(s,-j)\in\mathscr{I}_{F'}}b_{s,i}-\sum_{(i,s)\in\mathscr{I}_{F'}}b_{-j,s}\right)
\end{equation}
\begin{equation} 
    \label{++}
    e_{a_1+1-i,a_1+1-j}\mapsto\left(\sum_{(s,a_1+1-j)\in\mathscr{I}_{F'}}b_{s,a_1+1-i}-\sum_{(a_1+1-i,s)\in\mathscr{I}_{F'}}b_{a_1+1-j,s}\right)
\end{equation}
Consider the equations provided by setting the right hand side of (\ref{+-}) equal to zero. Note that $(s,-j)\in\mathscr{I}_{F'}$ if and only if $s=a_1+1-j$ or $s<0$. If $s<0$, then $(s,i)\not\in\mathscr{I}_{\g'}$, so $b_{s,i}=0$. Similarly, $(i,s)\in\mathscr{I}_{F'}$ if and only if $s=i-a_1-1$ or $s>0$. If $s>0$, then $b_{-j,s}=0$ as $(-j,s)\not\in\mathscr{I}_{\g'}$. Therefore, the system of equations resulting from (\ref{+-}) reduces to 
\begin{equation}
    \label{maps}
    b_{a_1+1-j,i}=b_{-j,i-a_1-1}.
\end{equation} 
That is, the top $a_1\times a_1$ block of $B'$ is equal to the second $a_1\times a_1$ block of $B'$. An identical argument on the basis elements $e_{i,-j}$ mapped under $B_{\oF^a}([B,\cdot])$ shows that $b_{-i,-j}=b_{i,j}$, for all $i,j\leq n$, meaning the top $a_1\times a_1$ block of $B'$ is the rotation of the second $a_1\times a_1$ block of $B'$. To show that the application of a Block Elimination move to the functional does not change the dimension of the kernel, it suffices to show that elements in the peak block $[1,a_1]\times[-1,-a_1]$ indicated in Figure \ref{quadrants} are zero. We will show that the elements in the peaks must be defined in terms of previous peak blocks (if any). Therefore, by recursion, it will suffice to consider a seaweed of the form $\frac{2n+m}{n|a_1|\cdots|a_k|n}$, with $\sum a_i=m$ (i.e., the outer most peak block created in a component of size $n$ in $\g$). The recursion on the peak blocks is justified by evaluating the right hand side of (\ref{--}) and (\ref{++}) at zero and summing. Notice that $(s,-j)\in\mathscr{I}_{F'}$ if and only if $s=a_1+1-j$ or $s<0$ (i.e. $(s,-j)$ is one of the copied indices), and $(-i,s)\in\mathscr{I}_{F'}$ implies $s<0$. Therefore, combining (\ref{maps}) with the equation given by evaluating (\ref{--}) at zero is equivalent to the system

$$\underset{s<0}{\sum_{(s,-j)\in\mathscr{I}_{F'}}}b_{s,-i}+b_{a_1+1-j,-i}=\underset{s<0}{\sum_{(-i,s)\in\mathscr{I}_{F'}}}b_{-j,s}$$

\begin{equation}
\label{1}
    \Leftrightarrow \underset{s>0}{\sum_{(s,a_1+1-j)\in\mathscr{I}_{F'}}}b_{s,a_1+1-i}+b_{a_1+1-j,-i}=\underset{s>0}{\sum_{(a_1+1-i,s)\in\mathscr{I}_{F'}}}b_{a_1+1-j,s}.
\end{equation}

\noindent
When evaluated at zero, the right hand side of (\ref{++}) is equivalent to the system:
\begin{equation} 
\label{2}
\underset{s>0}{\sum_{(s,a_1+1-j)\in\mathscr{I}_{F'}}}b_{s,a_1+1-i}=\underset{0<s\leq a_1}{\sum_{(a_1+1-i,s)\in\mathscr{I}_{F'}}}b_{a_1+1-j,s}+\underset{a_1<s}{\sum_{(a_1+1-i,s)\in\mathscr{I}_{F'}}}b_{a_1+1-j,s}+b_{a_1+1-j,-i}.
\end{equation}

\noindent
Combining equations (\ref{1}) and (\ref{2}), so that the appropriate summations cancel, we have the following equation:
\begin{equation}
\label{peaks}
    -2b_{a_1+1-j,-i}=\underset{a_1<s}{\sum_{(a_1+1-i,s)\in\mathscr{I}_{F'}}}b_{a_1+1-j,s}.
\end{equation}

The same argument for $\oF^a$ yields an equation similar to equation (\ref{peaks}). Without loss of generality, consider the seaweed $\g$ of type $\frac{2n+m}{n|m|n}$. Let $G_m\in\gl(m)^*$. Define $\oF=(F_n\oplus G_m\oplus F_n)+\sum_{i=1}^ne_{n+m+i,i}^*$ and $\oF^a=(F_c\oplus G_m\oplus F_c^R)+\sum_{i=1}^ne_{2n+m+1-i,i}^*.$ The indices in these functionals are pictured in Figure \ref{basis step}.

\begin{figure}[H]
$$\begin{tikzpicture}[scale=0.4]
    \def\Node{\node [circle, fill, inner sep=1.5pt]}
    \draw (0,0)--(10,0)--(10,10)--(0,10)--(0,0);
    \draw [line width=0.45mm, fill=gray!20] (0,0)--(10,0)--(10,4)--(6,4)--(6,6)--(4,6)--(4,10)--(0,10)--(0,0);
    \draw (0,6)--(4,6)--(4,0);
    \draw (0,4)--(6,4)--(6,0);
    \node [label=left:{$n$}] at (0,8){};
    \node [label=left:{$n$}] at (0,2){};
    \node [label=below:{$n$}] at (2,0){};
    \node [label=below:{$n$}] at (8,0){};
    \node [label=left:{$m$}] at (0,5){};
    \node [label=below:{$m$}] at (5,0){};
    \node at (2,8){$\mathscr{I}_{F_n}$};
    \node at (8,2){$\mathscr{I}_{F_n}$};
    \node at (5,5){$\mathscr{I}_{G_m}$};
    \Node at (0.5,3.5){};
    \Node at (1.5,2.5){};
    \node at (2.5,1.75){\rotatebox{-10}{$\ddots$}};
    \Node at (3.5,0.5){};
\end{tikzpicture}
\hspace{2em}
\begin{tikzpicture}[scale=0.4]
    \def\Node{\node [circle, fill, inner sep=1.5pt]}
    \draw (0,0)--(10,0)--(10,10)--(0,10)--(0,0);
    \draw [line width=0.45mm, fill=gray!20] (0,0)--(10,0)--(10,4)--(6,4)--(6,6)--(4,6)--(4,10)--(0,10)--(0,0);
    \draw (0,6)--(4,6)--(4,0);
    \draw (0,4)--(6,4)--(6,0);
    \node [label=left:{$n$}] at (0,8){};
    \node [label=left:{$n$}] at (0,2){};
    \node [label=below:{$n$}] at (2,0){};
    \node [label=below:{$n$}] at (8,0){};
    \node [label=left:{$m$}] at (0,5){};
    \node [label=below:{$m$}] at (5,0){};
    \node at (2,8){$\mathscr{I}_{F_n}$};
    \node at (8,2){$\mathscr{I}_{F_n^R}$};
    \node at (5,5){$\mathscr{I}_{G_m}$};
    \Node at (0.5,0.5){};
    \Node at (1.5,1.5){};
    \node at (2.35,2.5){\rotatebox{80}{$\ddots$}};
    \Node at (3.5,3.5){};
\end{tikzpicture}$$
\caption{Indices in $\mathscr{I}_{F'}$ \textup(left\textup) and $\mathscr{I}_{\oF^a}$ \textup(right\textup)}
\label{basis step}
\end{figure}

\noindent 
For $\oF$, consider the images of the basis elements under $B_{F'}(B,\cdot)$. We have the following cases:
\begin{enumerate}
    \item $e_{i,j}\mapsto \sum_{(s,j)\in\mathscr{I}_{F_n}}b_{s,i}+b_{n+m+j,i}-\sum_{(i,s)\in\mathscr{I}_{F_n}}b_{j,s}$, \\
    for $i,j\in[1,n]$,
    \item $e_{i,j}\mapsto\sum_{(s,j)\in\mathscr{I}_{F_n}}b_{s,i}+b_{n+m+j,i}-\sum_{(i-n,s-n)\in\mathscr{I}_{G_m}}b_{j,s}$, \\
    for $i\in[n+1,m]$, $j\in[1,n]$,
    \item $e_{i,j}\mapsto \sum_{(s-n,j-n)\in\mathscr{I}_{G_m}}b_{s,i}-\sum_{(i-n,s-n)\in\mathscr{I}_{G_m}}b_{j,s}$, \\
    for $i,j\in [n+1,m]$,
    \item $e_{i,j}\mapsto\sum_{(s,j)\in\mathscr{I}_{F_n}}b_{s,i}+b_{n+m+j,i}-\sum_{(i-n,m,s-n-m)\in\mathscr{I}_{F_n}}b_{j,s}-b_{j,i-n-m}$, \\
    for $i\in[n+m+1,2n+m]$, $j\in [1,n]$,
    \item $e_{i,j}\mapsto\sum_{(s-n,j-n)\in\mathscr{I}_{G_m}}b_{s,i}-\sum_{(i-n,m,s-n-m)\in\mathscr{I}_{F_n}}b_{j,s}-b_{j,i-n-m}$, \\
    for $i\in[n+m+1,2n+m]$, $j\in [n+1,m]$,
    \item $e_{i,j}\mapsto\sum_{(s-n-m,j-n-m)\in\mathscr{I}_{F_n}}b_{s,i}-\sum_{(i-n,m,s-n-m)\in\mathscr{I}_{F_n}}b_{j,s}-b_{j,i-n-m}$, \\
    for $i,j\in[n+m+1,2n+m]$.
\end{enumerate}
The expressions in Case 1, when evaluated at zero, combine with the expressions in Case 6 evaluated at zero to show that $b_{j,i}=0$, for all $j\in[n+m+1,2n+m]$, $i\in[1,n]$. The equations generated by evaluating the Case 4 expressions at zero yield that $b_{i,j}=b_{i+n+m,j+n+m}$, for all $i,j\in[1,n]$. The expressions in Case 1, knowing that $b_{n+m+j,i}=0$, solve to a relations matrix $B_1$ of $\ker(B_{F_n})$. 

In Case 2, both summations are zero as the indices are not in $\mathscr{I}_{\g}$, and so the equation which results from evaluating the right hand side at zero simplifies to $b_{n+m+j,i}=0$, for all $j\in[1,n]$ and $i\in[n+1,m]$. The same argument on the expressions in Case 5 generates the $m\times n$ rectangle in the first column of Figure \ref{basis step} \textup(left\textup) must also be a zero matrix. Finally, upon evaluating the expressions in Case 3 at zero, we obtain a relations matrix $B_2$ of $\ker(B_{G_m})$. 

The final evaluation of the system of equations given by mapping the basis elements to zero under $B_{\oF}([B,\cdot])$ will be that a relations matrix $B$ of $\ker(B_{\oF})$ is 

$$B=B_1\oplus B_2\oplus B_1.$$ 

\noindent 
Hence, $\dim\ker(B_{F'})=\dim\ker(B_{F_n})+\dim\ker(B_{G_m})$. A similar argument on $\oF^a$ shows that a relations matrix $B'$ for $\ker(B_{\oF^a})$ is of the form

$$B'=B_1\oplus B_2\oplus B_1^R,$$ 

\noindent 
and the dimension claim holds. 
\end{proof}

We have the following immediate Corollary from the proof of Theorem \ref{Functional Construction}.

\begin{theorem}
\label{kernel}
Let $\g$ be a seaweed with homotopy type $H(c_1,\cdots,c_h)$, and let $F_{c_i}\in\gl(c_i)^*$ for each $i$. Any functional $F$ constructed via the methods of Definition \ref{Functional Construction def} is such that, if $C_{c_{i_1}}\oplus C_{c_{i_2}}\oplus\cdots\oplus C_{c_{i_k}}$ is the core of $\g$ \textup(necessarily, $i_j\in\{1,\cdots,h\}$, for all $j$\textup), then a relations matrix of the space $\ker(B_F)$ is 

$$B_{i_1}^r\oplus\cdots\oplus B_{i_k}^r,$$

\noindent 
where the superscript, $r$, represents the appropriate rotation of the matrix where necessary.
\end{theorem}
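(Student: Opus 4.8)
The plan is to notice that the proof of Theorem \ref{Functional Construction} does more than count $\dim\ker(B_F)$: at every stage of the winding-down induction it exhibits an explicit relations matrix of $\ker(B_F)$ in block form (culminating in the base-case forms $B_1\oplus B_2\oplus B_1$ and $B_1\oplus B_2\oplus B_1^R$). So I would prove this corollary by re-running that same induction on the signature, but carrying the full block structure rather than only the number of free variables. The inductive invariant to maintain is that a relations matrix of $\ker(B_F)$ is a direct sum of one block per core block $C_{c_{i_j}}\in\textgoth{C}_\g$, each such block being a relations matrix $B_{i_j}^r$ of $\ker(B_{F_{c_{i_j}}})$ carrying the appropriate dihedral rotation $r\in\{B,\,B^t,\,B^R,\,B^{\widehat t}\}$, and with all blocks belonging to a common component $c_i$ sharing the \emph{same} free variables.

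First I would treat the two moves that create core blocks from nothing. A Component Deletion $C(c)$ is, by the opening paragraph of the proof of Theorem \ref{Functional Construction}, passage from $F$ to $F_c\oplus F$, whence $B$ becomes $B_c\oplus B$ with $B_c$ a fresh relations matrix of $\ker(B_{F_c})$ carrying new free variables; this $B_c$ is exactly the first core block of the new component, so the invariant persists. The Flip move sends $F$ to $F^t$ and hence $B$ to $B^t$, which transposes each summand while keeping every diagonal core block on the diagonal; each $B_{i_j}^r$ is replaced by $(B_{i_j}^r)^t$, still one of the allowed rotations, and the invariant is again preserved.

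Next I would handle the contraction moves (Block Elimination, and, after the relabeling already noted in the proof, Rotation Contraction and Pure Contraction), which grow an existing component by adjoining a new diagonal core block. Here the essential work is already done inside the main proof: equation (\ref{maps}), together with its rotated analogue for $\oF^a$, shows that the newly created top $a_1\times a_1$ block of $B'$ is forced to equal, or to be the rotation of, the adjacent block from the same component, so the new core block \emph{reuses} that component's free variables rather than introducing new ones; and equation (\ref{peaks}) shows that the peak entries coupling the new block to the remainder vanish. Hence $B'$ gains exactly one new summand $B_{i_j}^r$, seated on the freshly created diagonal core block and identified up to rotation with the component's earlier summand. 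This simultaneously preserves the direct-sum form and reproduces the dimension count of Theorem \ref{Functional Construction}, since the repeated summands of a single component contribute their free variables only once.

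The main obstacle is the consistent bookkeeping of the superscript $r$. Through an arbitrary interleaving of Flips and contractions one must track which element of $\{B,\,B^t,\,B^R,\,B^{\widehat t}\}$ each summand currently carries and verify that it agrees with the orientation in which the winding-up deposits the corresponding core block; the transpose and rotation conventions introduced at the start of Section~\ref{Function Theory}, together with Lemma \ref{Symmetry Lemma}, supply the needed identities, but this must be checked case by case. The subtler point is to keep straight which summands belong to the same component, so that they are genuinely identified (sharing variables) rather than counted independently --- precisely the content of the copy relation (\ref{maps}) and the peak vanishing (\ref{peaks}) --- since this is what prevents the dimension from inflating. As a relations matrix is defined only up to conjugation by $G\in GL(n;\C)$, no further normalization is required once these identifications are in place.
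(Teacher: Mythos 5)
Your proposal is correct and is essentially the paper's own argument: the paper presents Theorem \ref{kernel} as an immediate corollary of the proof of Theorem \ref{Functional Construction}, precisely because that induction already exhibits the relations matrix in block form at every stage (Component Deletion giving $B_c\oplus B$, Flip giving $B^t$, and the contraction moves using equations (\ref{maps}) and (\ref{peaks}) to identify the new core block with its neighbor, up to rotation, and to kill the peak entries). Your explicit bookkeeping of the rotation superscript and of which summands share free variables is exactly the content the paper leaves implicit.
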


\begin{ex}
\label{mixmatch}
By direct computation \textup(as demonstrated in the proof of Theorem \ref{My Theorem 1}\textup), $F_2$ and $F_4$ are regular on $\gl(2)$ and $\gl(4)$ respectively, and they have the respective relations matrices:

$$B_4=\left(\begin{array}{cccc}
b_1+b_2+b_3+b_4&b_1+b_2+b_3&b_1+b_2&b_1\\
b_1+b_2+b_3&b_1+b_2+b_4&b_1+b_3&b_2\\
b_1+b_2&b_1+b_3&b_2+b_4&b_3\\
b_1&b_2&b_3&b_4
\end{array}\right),\hspace{1em}
B_2=\left(\begin{array}{cc}b_5+b_6&b_5\\b_5&b_6\end{array}\right).$$ 

\noindent 
Let $F$ be the functional whose indices are illustrated in Figure \ref{mixmatch fig}.
\begin{figure}[H]
$$\begin{tikzpicture}[scale=0.2]
    \def\Node{\node [circle, fill, inner sep=1.3pt]}
    \draw [line width=0.45mm] (0,0)--(0,16)--(16,16)--(16,0)--(0,0);
    \draw (0,16)--(0,6)--(10,6)--(10,4)--(12,4)--(12,0);
    \filldraw[line width=0.25mm, draw=black, fill=gray!10] (10,6)--(0,6)--(0,16)--(16,16)--(16,0)--(12,0)--(12,12)--(4,12)--(4,10)--(10,10)--(10,6);
    \draw [dotted] (0,16)--(16,0);
    
    \draw(6,12)--(6,10);
    \draw (10,6)--(12,6);
    \draw (0,16)--(4,16)--(4,12)--(0,12);
    \draw (6,10)--(10,10)--(10,6)--(6,6)--(6,10);
    \draw (12,4)--(16,4);
    \Node at (4.5,11.5){};
    \Node at (4.5,10.5){};
    \Node at (5.5,11.5){};
    
    \Node at (10.5,4.5){};
    \Node at (11.5,5.5){};
    \Node at (11.5,4.5){};
    
    \Node at (10.5,10.5){};
    \Node at (11.5,11.5){};
    
    \Node at (0.5,15.5){};
    \Node at (0.5,14.5){};
    \Node at (0.5,13.5){};
    \Node at (0.5,12.5){};
    \Node at (1.5,15.5){};
    \Node at (1.5,14.5){};
    \Node at (1.5,13.5){};
    \Node at (2.5,15.5){};
    \Node at (2.5,14.5){};
    \Node at (3.5,15.5){};
    
    \Node at (6.5,6.5){};
    \Node at (6.5,7.5){};
    \Node at (6.5,8.5){};
    \Node at (6.5,9.5){};
    \Node at (7.5,7.5){};
    \Node at (7.5,8.5){};
    \Node at (7.5,9.5){};
    \Node at (8.5,8.5){};
    \Node at (8.5,9.5){};
    \Node at (9.5,9.5){};
    
    \Node at (12.5,0.5){};
    \Node at (13.5,1.5){};
    \Node at (13.5,0.5){};
    \Node at (14.5,2.5){};
    \Node at (14.5,1.5){};
    \Node at (14.5,0.5){};
    \Node at (15.5,3.5){};
    \Node at (15.5,2.5){};
    \Node at (15.5,1.5){};
    \Node at (15.5,0.5){};
    
    \Node at (15.5,15.5){};
    \Node at (14.5,14.5){};
    \Node at (13.5,13.5){};
    \Node at (12.5,12.5){};
    
    \Node at (0.5,9.5){};
    \Node at (1.5,8.5){};
    \Node at (2.5,7.5){};
    \Node at (3.5,6.5){};
\end{tikzpicture}$$
\caption{Indices in $\mathscr{I}_F$}
\label{mixmatch fig}
\end{figure}
A relations matrix of $\ker(B_F)$ is $B_4\oplus B_2\oplus B_4\oplus B_2^R\oplus B_4^R$.
\end{ex}
\end{subsection}

\begin{subsection}{An explicit regular functional on $\gl(n)$}
\label{An Explicit Regular Functionals on gl(n)}

The purpose of this section is to provide an explicit regular functional $F_n$ on $\gl(n)$. We will leverage this construction together with Definition \ref{Functional Construction def} to construct regular fuctionals on any seaweed subalgebra $\g\subseteq\gl(n)$ with homotopy type $H(c_1,\cdots,c_h)$ by embedding the functionals $F_{c_i}$ appropriately. 


\begin{theorem}
\label{My Theorem 1}
The functional $F_n=\sum_{i=1}^{n}\sum_{j=1}^{n+1-i}e_{i,j}^*$ is regular on $\gl(n)$.
\end{theorem}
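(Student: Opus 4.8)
The plan is to compute $\ker(B_{F_n})$ explicitly and show it has dimension exactly $n$. Since $\gl(n)$ has index $n$ by Theorem \ref{Index Cor}, every $F\in\gl(n)^*$ satisfies $\dim\ker(B_F)\ge n$; hence it suffices to prove the \emph{upper} bound $\dim\ker(B_{F_n})\le n$. First I would record two structural features of $F_n$: its index set is $\mathscr{I}_{F_n}=\{(i,j):i+j\le n+1\}$, the positions on or above the antidiagonal, all carrying coefficient $c_{i,j}=1$; and $F_n$ is symmetric with respect to both the main diagonal and the antidiagonal, since $i+j\le n+1$ is preserved by $(i,j)\mapsto(j,i)$ and by $(i,j)\mapsto(n+1-j,n+1-i)$. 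The latter lets me invoke Lemma \ref{Symmetry Lemma} and Lemma \ref{zero symmetry} to cut the bookkeeping roughly in half.

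Next I would apply Lemma \ref{Syst of Eqs}. Because every position of $\gl(n)$ is admissible, the second family of equations is vacuous, and a matrix $B=[b_{i,j}]$ lies in $\ker(B_{F_n})$ precisely when
\[
\sum_{s=1}^{\,n+1-j} b_{s,i} \;=\; \sum_{s=1}^{\,n+1-i} b_{j,s} \qquad (1\le i,j\le n).
\]
The goal is to solve this system and produce an explicit relations matrix (in the sense of Definition \ref{relations matrix}) whose free variables are exactly the $n$ entries of the last row $b_{n,1},\dots,b_{n,n}$; the instances $B_4$ and $B_2$ displayed in Example \ref{mixmatch} are the $n=4$ and $n=2$ cases and serve as a template for the general closed form.

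To solve the system I would difference it. Subtracting the equation for $(i,j+1)$ from that for $(i,j)$ isolates a single entry, giving $b_{n+1-j,i}=\sum_{s=1}^{n+1-i}(b_{j,s}-b_{j+1,s})$, and symmetrically $b_{j,n+1-i}=\sum_{s=1}^{n+1-j}(b_{s,i}-b_{s,i+1})$; together with the two boundary cases $i=1$ and $j=1$ of the undifferenced system, these relations let me express every $b_{i,j}$ as a $\Z$-linear combination of $b_{n,1},\dots,b_{n,n}$, propagating values upward from the last row. I would then verify that the resulting matrix indeed satisfies the full system (so its span lies in $\ker(B_{F_n})$) and that its $n$ variables are genuinely free, whence the solution space injects into $\C^n$ and $\dim\ker(B_{F_n})\le n$. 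Combined with the lower bound this yields $\dim\ker(B_{F_n})=n$, so $F_n$ is regular.

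The hard part will be organizing the elimination. The system is not triangular in any naive ordering, because each equation couples the index $i$ (resp.\ $j$) with its antidiagonal reflection $n+1-i$ (resp.\ $n+1-j$), so the difference relations feed conjugate rows and columns into one another in a circular fashion. The fix is to process reflected pairs $\{i,\,n+1-i\}$ together and to use the boundary equations $i=1,\ j=1$ to break the circularity, with the antidiagonal symmetry from Lemma \ref{zero symmetry} keeping the two halves consistent. The one subtlety to check carefully is that after all reductions exactly $n$ parameters survive --- no more and no fewer; the lower bound $\dim\ker(B_{F_n})\ge n$ from Theorem \ref{Index Cor} is a useful guardrail here, since it certifies that the elimination cannot have secretly collapsed the last row to fewer than $n$ independent variables.
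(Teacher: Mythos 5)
Your overall strategy coincides with the paper's own proof: reduce to the linear system of Lemma \ref{Syst of Eqs} (whose second family is indeed vacuous on $\gl(n)$), difference consecutive equations to isolate single entries, propagate from the boundary rows inward through the reflected pairs $\{j,\,n+1-j\}$, and let the lower bound $\dim\ker(B_{F_n})\ge n$ from Theorem \ref{Index Cor} convert the upper bound into equality. Your two differenced relations are correct, and your outside-in pairing of reflected rows is exactly how the paper organizes its induction, so the elimination itself does go through (the paper first reaches $2n-1$ parameters --- last row and last column --- and then equates them; your plan of expressing everything directly in terms of the last row is a legitimate streamlining of the same computation).

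There is, however, one genuine error: $F_n$ is \emph{not} symmetric with respect to the antidiagonal. Antidiagonal transposition sends $(i,j)$ to $(n+1-j,\,n+1-i)$, whose index sum is $2n+2-(i+j)$; hence it maps $\mathscr{I}_{F_n}=\{(i,j)\;:\;i+j\le n+1\}$ onto $\{(i,j)\;:\;i+j\ge n+1\}$, the region on or \emph{below} the antidiagonal, so $F_n^{\widehat{t}}\neq F_n$ for $n\ge 2$. Only the main-diagonal reflection preserves the condition $i+j\le n+1$. Since Lemma \ref{Symmetry Lemma} and Lemma \ref{zero symmetry} require both $\g$ and $F$ to be symmetric with respect to the relevant diagonal, your two appeals to antidiagonal symmetry --- to halve the bookkeeping and to ``keep the two halves consistent'' in the elimination --- are not licensed. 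The gap is repairable rather than fatal: the main-diagonal symmetry, which you correctly establish, is all the paper uses, both to halve the domain and, crucially, to prove $b_{n,s}=b_{s,n}$ (its Claim 2), which is what certifies that only $n$ parameters survive; alternatively, your row-only elimination can be run on the full domain with no symmetry at all, since the differenced relations determine each row from its reflected partner together with the last row. But as written, the consistency step of your plan rests on a symmetry that $F_n$ does not possess.
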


The indices in $\mathscr{I}_{F_n}$ are illustrated in Figure \ref{F_n inds} as the grey region and solid lines.
\begin{figure}[H]
$$\begin{tikzpicture}[scale=0.6]
    \filldraw[draw=black, fill=gray!20, line width=0.45mm] (-0.5,-0.5)--(4.5,4.5)--(-0.5,4.5)--(-0.5,-0.5);
    
    \draw [thick] (-0.6,-0.6) to[bend left=10] (-0.6,4.6);
    \draw [thick] (4.6,-0.6) to[bend right=10] (4.6,4.6);    
\end{tikzpicture}$$
\caption{Indices in $\mathscr{I}_{F_n}$}
\label{F_n inds}
\end{figure}

\begin{proof}  See Appendix A.  
\end{proof}
\end{subsection}

\begin{subsection}{Additional regular functionals on $\gl(n)$}
\label{Three More Regular Functionals}

We define the {\textit{size}} of a functional $F$ to be equal to $|\mathscr{I}_F|$. Computationally, smaller is better. We provide seven additional regular functionals 
$H_n$, $H_n'$, $K_n$, $K_n'$, $G_n$, $G_n'$, and $F_n'$ all of which are based on $F_n$. Their relative sizes are:

$$
|\mathscr{I}_{F_n'}|\leq |\mathscr{I}_{G_n}|=|\mathscr{I}_{G_n'}| \leq |\mathscr{I}_{H_n}| = |\mathscr{I}_{H_n'}| \leq |\mathscr{I}_{K_n}|=|\mathscr{I}_{K_n'}|\leq |\mathscr{I}_{F_n}|.
$$

The smallest of these functionals, $F_n'$, is smaller than $F_n$ by $2n-1$ terms (i.e., $|\mathscr{I}_{F_n}|=|\mathscr{I}_{F_n'}|+2n-1$). The proofs of these seven functionals are closely related and all require the regularity of $F_n$ as an inductive hypothesis. These proofs are omitted from this paper, but detailed in \textbf{[10]}.

\begin{theorem}
\label{My Theorem 2}
The functional $G_n=e_{1,1}^*+\sum_{i=2}^{n-1}\sum_{j=2}^{n+1-i}e_{i,j}^*=e_{1,1}^*\oplus F_{n-2}$ is regular on $\gl(n)$ for $n\geq 4$.
\end{theorem}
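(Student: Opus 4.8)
The plan is to prove regularity by showing directly that $\dim\ker(B_{G_n})=n$, which by Theorem \ref{Index Cor} equals $\ind\gl(n)$ and so is equivalent to $G_n$ being regular. Since $G_n=\sum_{(i,j)\in\mathscr{I}_{G_n}}e_{i,j}^*$ has all coefficients equal to $1$, I would first invoke Lemma \ref{Syst of Eqs} to describe $\ker(B_{G_n})$ as the solution space of the $n^2$ equations $\sum_{s:(s,j)\in\mathscr{I}_{G_n}}b_{s,i}=\sum_{s:(i,s)\in\mathscr{I}_{G_n}}b_{j,s}$, one for each $(i,j)\in\{1,\dots,n\}^2$ (the admissibility condition is vacuous on all of $\gl(n)$). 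Because $\mathscr{I}_{G_n}$ is symmetric across the main diagonal, Lemma \ref{zero symmetry} then lets me deduce the column analysis from the row analysis and halve the bookkeeping.

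The observation driving everything is that $\mathscr{I}_{G_n}$ is supported on the central block $M\times M$, with $M=\{2,\dots,n-1\}$, together with the single position $(1,1)$: row $n$ and column $n$ of $G_n$ are empty, and row $1$ and column $1$ meet $\mathscr{I}_{G_n}$ only at $(1,1)$. Tracking where each unknown $b_{i,j}$ can occur, I would show that the system decouples into three independent subsystems — one in the entries of column $1$ and row $1$, one in the central entries $b_{i,j}$ with $i,j\in M$, and one in the entries of row $n$ and column $n$. This decoupling is the crux: a column-$i$ entry appears on the left of the equation $(i_0,j_0)$ only when $i_0=i$, and a row-$j$ entry appears on the right only when $j_0=j$, and combining this with the support of $G_n$ confines each unknown to equations of its own block.

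I would then evaluate the three subsystems. For the boundary, the equations $(1,j)$ reduce to the nested relations $\sum_{s=2}^{n+1-j}b_{s,1}=b_{j,1}$ for $2\le j\le n-1$ together with $b_{n,1}=0$; a triangular elimination forces $b_{s,1}=0$ for all $s\ge 2$ while leaving $b_{1,1}$ unconstrained, and Lemma \ref{zero symmetry} handles row $1$. Likewise the equations $(n,j)$ force every entry of column $n$ to vanish except the free entry $b_{n,n}$, with row $n$ following by symmetry. Thus the boundary contributes exactly the two free parameters $b_{1,1}$ and $b_{n,n}$. For the central block, the reindexing $i\mapsto i-1$, $j\mapsto j-1$ carries the equations $(i,j)$ with $i,j\in M$ precisely onto the defining system of $\ker(B_{F_{n-2}})$ on $\gl(n-2)$; since $F_{n-2}$ is regular by Theorem \ref{My Theorem 1} (and $n-2\ge 2$ because $n\ge 4$), this block contributes $\dim\ker(B_{F_{n-2}})=n-2$. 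Summing the three independent contributions yields $\dim\ker(B_{G_n})=1+(n-2)+1=n$, and the resulting relations matrix of $\ker(B_{G_n})$ is $b_{1,1}e_{1,1}\oplus B_{n-2}\oplus b_{n,n}e_{n,n}$, with $B_{n-2}$ the shifted relations matrix of $\ker(B_{F_{n-2}})$ filling the central block.

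The main obstacle is conceptual rather than computational: one must resist trying to conclude regularity directly from the direct-sum Theorem \ref{ds regulars}. That theorem only gives regularity of $e_{1,1}^*\oplus F_{n-2}$ on the proper subalgebra $\gl(1)\oplus\gl(n-2)\subseteq\gl(n)$, where the kernel has dimension $n-1$; the genuine content of the statement is that enlarging the ambient algebra to all of $\gl(n)$ adds exactly one further dimension — the free corner $b_{n,n}$ produced by the empty row and column $n$ — so that regularity survives. The remaining work is routine once the decoupling is established: verifying that the triangular boundary system has only the trivial solution, and that the shifted central equations match the $F_{n-2}$ system exactly.
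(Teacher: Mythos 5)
You face an immediate handicap here: the paper itself contains no proof of this theorem to compare against --- the proofs of all seven functionals in Section \ref{Three More Regular Functionals} are omitted and deferred to the thesis \textbf{[10]}, with only the remark that they are inductions using the regularity of $F_n$. So your direct decoupling argument must stand on its own, and most of it does. The kernel equations of Lemma \ref{Syst of Eqs} for $G_n$ really do split exactly as you claim, because row $1$ and column $1$ of $\mathscr{I}_{G_n}$ meet it only at $(1,1)$ while row $n$ and column $n$ are empty: the column-$1$ entries of $B$ occur only in the equations indexed $(1,j)$, the row-$1$ entries only in those indexed $(i,1)$, the central entries only in those indexed by $M\times M$, and the central subsystem is the shifted $F_{n-2}$ system, contributing $n-2$ by Theorem \ref{My Theorem 1}; the corners $b_{1,1}$ and $b_{n,n}$ are free, and row and column $n$ are forced to vanish.

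The gap is the boundary step, and it is not a cosmetic one. The column-$1$ subsystem
\begin{equation*}
b_{j,1}=\sum_{s=2}^{n+1-j}b_{s,1}\quad(2\le j\le n-1),\qquad b_{n,1}=0,
\end{equation*}
is not triangular --- each equation couples a long block of unknowns --- and it does not always force $b_{s,1}=0$. Subtracting consecutive equations gives $b_{j,1}-b_{j+1,1}=b_{n+1-j,1}$, and solving this recursion shows the general candidate solution is the $6$-periodic pattern $(t,0,-t,-t,0,t,t,0,\dots)$ down the column, which is compatible with the full system precisely when $n-2\equiv 1\pmod 6$, i.e. $n\equiv 3\pmod 6$. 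Concretely, for $n=9$ the assignment
\begin{equation*}
(b_{2,1},b_{3,1},\dots,b_{8,1})=(1,0,-1,-1,0,1,1),\qquad b_{9,1}=0,
\end{equation*}
satisfies every equation (for instance $j=2$: $1+0-1-1+0+1+1=1=b_{2,1}$; $j=5$: $1+0-1-1=-1=b_{5,1}$). Because your decoupling is airtight, the matrix supported on column $1$ with these entries lies in $\ker(B_{G_9})$; adding its transpose (which lies in the kernel by Lemma \ref{Symmetry Lemma}, as $G_n$ and $\gl(n)$ are symmetric), the free corners $b_{1,1}$ and $b_{9,9}$, and the $7$-dimensional central block gives $\dim\ker(B_{G_9})\ge 11>9=\ind\gl(9)$. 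So no elimination scheme can rescue the step: for $n\equiv 3\pmod 6$, $n\ge 9$, your (correct) reduction in fact shows that $G_n$ is \emph{not} regular, putting the statement as written in conflict with this elementary computation; for all other $n\ge 4$ the boundary system does have only the trivial solution and your count $1+(n-2)+1=n$ goes through. At minimum, your proof needs this mod-$6$ case analysis made explicit, and the discrepancy with the stated theorem (whose proof exists only in \textbf{[10]}) needs to be flagged rather than absorbed into the word ``triangular''.
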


As a visual aid, the indices of the functional of $G_n$ of Theorem \ref{My Theorem 2} is the functional such that $\mathscr{I}_{G_n}$ is the set of indices illustrated by the grey region in Figure \ref{Kn} \textup(left\textup). As immediate corollaries to the proof of Theorem \ref{My Theorem 2}, we have two more regular functionals on $\gl(n)$.

\begin{theorem}
\label{Functional 3}
The functional $H_n=\sum_{i=1}^{n-1}\sum_{j=1}^{n-i}e_{i,j}^*=F_{n-1}$ is regular on $\gl(n)$.
\end{theorem}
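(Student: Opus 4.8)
The plan is to prove regularity by a direct dimension count: since $\ind\gl(n)=n$ by Theorem \ref{Index Cor}, it suffices to show $\dim\ker(B_{H_n})=n$. First I would record that $H_n=F_{n-1}$ has index set $\mathscr{I}_{H_n}=\{(i,j)\mid i,j\ge 1,\ i+j\le n\}$, a ``staircase'' confined to the top-left $(n-1)\times(n-1)$ corner of $\gl(n)$; in particular no position in row $n$ or column $n$ lies in the support. Because $\gl(n)$ has no forced-zero positions, the second family of equations in Lemma \ref{Syst of Eqs} is vacuous, so $\ker(B_{H_n})$ is cut out entirely by the first family, and for a matrix $B=[b_{i,j}]$ the relevant equation at position $(i,j)$ is $\sum_{(s,j)\in\mathscr{I}_{H_n}}b_{s,i}=\sum_{(i,s)\in\mathscr{I}_{H_n}}b_{j,s}$.

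Next I would split the index $(i,j)$ ranging over $[1,n]^2$ into four regimes and read off each equation. In regime $(A)$, with $i,j\le n-1$, the equation becomes $\sum_{s=1}^{n-j}b_{s,i}=\sum_{s=1}^{n-i}b_{j,s}$; since $i,j\ge 1$ forces every summation index to satisfy $s\le n-1$, these equations involve only the entries of the top-left block $B'$, and they coincide verbatim with the equations defining $\ker(B_{F_{n-1}})$ for $F_{n-1}$ viewed on $\gl(n-1)$. By Theorem \ref{My Theorem 1} applied at size $n-1$, the functional $F_{n-1}$ is regular on $\gl(n-1)$, so $B'$ ranges over a space of dimension $\ind\gl(n-1)=n-1$. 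In regime $(B)$, with $i=n$ and $j\le n-1$, the right-hand side is empty and the equations read $\sum_{s=1}^{n-j}b_{s,n}=0$ for $j=1,\dots,n-1$; letting $k=n-j$ run through $1,\dots,n-1$ and subtracting consecutive relations forces $b_{1,n}=\cdots=b_{n-1,n}=0$, with $b_{n,n}$ never appearing. Regime $(C)$, with $j=n$, is the transposed situation and forces $b_{n,1}=\cdots=b_{n,n-1}=0$. Finally, regime $(D)$, $i=j=n$, gives only the trivial identity $0=0$, and $b_{n,n}$ occurs in no equation at all, hence is free.

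Assembling these, a matrix lies in $\ker(B_{H_n})$ exactly when its top-left block is an arbitrary element of $\ker(B_{F_{n-1}})$, its last row and column off the corner vanish, and $b_{n,n}$ is arbitrary; therefore $\dim\ker(B_{H_n})=(n-1)+1=n$, which is regularity. The step that must be verified most carefully — and the only place the index bookkeeping can go wrong — is the decoupling in regime $(A)$: one must confirm that because the support of $H_n$ stops at the anti-diagonal $i+j=n$, every summation index $s$ there satisfies $s\le n-1$, so no regime-$(A)$ equation ever reaches into row or column $n$. Once that independence is established, the three contributions genuinely add, and the count is immediate. This is precisely the ``corollary to the proof of Theorem \ref{My Theorem 2}'' alluded to above, with the single free corner $b_{n,n}$ playing the role of the distinguished entry in the $G_n$ argument.
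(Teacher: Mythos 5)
Your proof is correct. The paper itself never writes out a proof of this theorem: it presents $H_n$ as an ``immediate corollary'' of the proof of Theorem~\ref{My Theorem 2} (which is also omitted, with details deferred to the thesis \textbf{[10]}), adding only the remark that all seven of these proofs use the regularity of $F_n$ as an inductive hypothesis. Your argument supplies the missing details, and by a more direct route: instead of passing through $G_n$, you apply Theorem~\ref{My Theorem 1} at size $n-1$ and show that the system from Lemma~\ref{Syst of Eqs} decouples. The decisive point, which you correctly isolate and verify, is that since the support of $H_n$ satisfies $i+j\le n$, no summation index in the equations indexed by $(i,j)$ with $i,j\le n-1$ ever reaches row or column $n$, so those equations reproduce verbatim the system for $F_{n-1}$ on $\gl(n-1)$; the equations with $i=n$ (resp.\ $j=n$) telescope to force the last column (resp.\ row) off the corner to vanish; and $b_{n,n}$ occurs in no equation, hence is free. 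This gives $\dim\ker(B_{H_n})=(n-1)+1=n=\ind\gl(n)$, which is regularity, and it also confirms the paper's parenthetical claim that a relations matrix for such a functional is a direct sum of a relations matrix of $\ker(B_{F_{n-1}})$ with a free $1\times 1$ block $(b)$. A small bonus of your route: it does not inherit the $n\ge 4$ restriction appearing in Theorem~\ref{My Theorem 2}, so the statement is proved uniformly for all $n$.
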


As a visual aid, the indices in $\mathscr{I}_{H_n}$ are illustrated in Figure \ref{Kn} (center).

\begin{theorem}
\label{Functional 4}
The functional $K_n=e_{1,1}^*+\sum_{i=2}^n\sum_{j=2}^{n+2-i}e_{i,j}=G_{n+1}$ is regular on $\gl(n)$.
\end{theorem}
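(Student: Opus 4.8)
The plan is to transfer the regularity of $G_{n+1}$ on $\gl(n+1)$, already supplied by Theorem \ref{My Theorem 2}, down to $\gl(n)$; I assume throughout that $n\ge 3$, so that $n+1\ge 4$ and $G_{n+1}$ falls under that theorem. First I would record the purely combinatorial identity $\mathscr{I}_{K_n}=\mathscr{I}_{G_{n+1}}$: both sets equal $\{(1,1)\}\cup\{(i,j):2\le i,\ 2\le j,\ i+j\le n+2\}$, and since $i,j\ge 2$ together with $i+j\le n+2$ forces $i,j\le n$, this common index set lies entirely in $[1,n]\times[1,n]$. In particular $G_{n+1}$ (namely $e_{1,1}^*\oplus F_{n-1}$) involves no position in row or column $n+1$. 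Because $\ind\gl(n)=n$ and $\ind\gl(n+1)=n+1$ by Theorem \ref{Index Cor}, the regularity of $G_{n+1}$ gives $\dim\ker(B_{G_{n+1}})=n+1$, so it remains only to prove $\dim\ker(B_{K_n})=n$.

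The heart of the argument is to compare the two kernel systems produced by Lemma \ref{Syst of Eqs}. I would write out the defining system of $\ker(B_{G_{n+1}})$ on $\gl(n+1)$ and observe that, because the support of $G_{n+1}$ avoids row and column $n+1$, the system splits into four mutually independent blocks of variables: the interior entries $b_{k,l}$ with $k,l\le n$; the last column $b_{s,n+1}$ with $s\le n$; the last row $b_{n+1,s}$ with $s\le n$; and the single corner $b_{n+1,n+1}$. The key point is that the equations indexed by pairs $(i,j)$ with $i,j\le n$ involve \emph{only} interior variables and coincide verbatim with the defining system of $\ker(B_{K_n})$ on $\gl(n)$ (the index sets and coefficients being identical); the equations indexed by $(n+1,j)$ involve only the last column, those indexed by $(i,n+1)$ involve only the last row, and $b_{n+1,n+1}$ appears in no equation and is therefore free. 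Consequently $\dim\ker(B_{G_{n+1}})=\dim\ker(B_{K_n})+d_{\mathrm{col}}+d_{\mathrm{row}}+1$, where $d_{\mathrm{col}}$ and $d_{\mathrm{row}}$ are the dimensions of the last-column and last-row solution spaces.

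It then suffices to check $d_{\mathrm{col}}=d_{\mathrm{row}}=0$. For the last column, writing $x_s:=b_{s,n+1}$, the equation indexed by $(n+1,1)$ reads $x_1=0$, while for $2\le j\le n$ the equation indexed by $(n+1,j)$ reads $\sum_{s=2}^{\,n+2-j}x_s=0$. Reading these off from $j=n$ downward (first $x_2=0$, then $x_2+x_3=0$, and so on) forces every $x_s=0$ by a one-line triangular back-substitution; the last row is handled identically by interchanging the roles of $i$ and $j$. Hence $d_{\mathrm{col}}=d_{\mathrm{row}}=0$, the displayed equality collapses to $n+1=\dim\ker(B_{K_n})+1$, and so $\dim\ker(B_{K_n})=n=\ind\gl(n)$, proving that $K_n$ is regular.

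The one genuinely structural step — and the place I expect the argument to need the most care — is the decoupling in the second paragraph: one must verify directly from Lemma \ref{Syst of Eqs} that no equation mixes an interior variable $b_{k,l}$ (with $k,l\le n$) with a border variable, which is precisely where the hypothesis that $G_{n+1}$ omits row and column $n+1$ is used. Once that separation is secured, everything else reduces to the elementary back-substitution above together with the already-known value $\dim\ker(B_{G_{n+1}})=n+1$.
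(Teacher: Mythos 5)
Your proof is correct, but it takes a genuinely different route from the one the paper indicates. The paper never proves Theorem \ref{Functional 4} in-text: it presents $H_n$ and $K_n$ as ``immediate corollaries to the proof of Theorem \ref{My Theorem 2}'' -- i.e., corollaries of the linear-algebra induction carried out there, which uses the regularity of $F_n$ as an inductive hypothesis -- and defers all details to the thesis \textbf{[10]}. You instead use only the \emph{statement} of Theorem \ref{My Theorem 2} as a black box: since $\mathscr{I}_{K_n}=\mathscr{I}_{G_{n+1}}\subseteq[1,n]\times[1,n]$, the kernel system of Lemma \ref{Syst of Eqs} for $G_{n+1}$ on $\gl(n+1)$ decouples into the interior system (verbatim the system for $K_n$ on $\gl(n)$), the last-column system, the last-row system, and a free corner entry, giving $n+1=\dim\ker(B_{K_n})+d_{\mathrm{col}}+d_{\mathrm{row}}+1$. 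Your verification of the decoupling is sound, and the triangular back-substitution correctly yields $d_{\mathrm{col}}=d_{\mathrm{row}}=0$; in fact that computation is logically redundant, since $\dim\ker(B_{K_n})\geq\ind\gl(n)=n$ (Theorem \ref{Index Cor}) already forces $d_{\mathrm{col}}=d_{\mathrm{row}}=0$ in your displayed identity, so the decoupling alone finishes the proof. Note also that your standing hypothesis $n\geq 3$ is not merely convenient but necessary: for $n=2$ the functional $K_2=e_{1,1}^*+e_{2,2}^*$ is the trace, whose Kirillov form vanishes identically, so the statement fails there -- consistent with the restriction $n\geq 4$ in Theorem \ref{My Theorem 2}, though the paper states Theorem \ref{Functional 4} without any restriction. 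As for what each approach buys: the paper's route of re-running the induction produces explicit relations matrices for $\ker(B_{K_n})$, which the embedding constructions elsewhere in the paper consume; your route is shorter and self-contained, and it still recovers that structure indirectly, since it identifies $\ker(B_{G_{n+1}})$ as $\ker(B_{K_n})$ plus a one-dimensional corner, so a relations matrix for $K_n$ can be read off from one for $G_{n+1}$ by deleting the last row and column.
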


As a visual aid, the indices in $\mathscr{I}_{K_n}$ are illustrated in Figure \ref{Kn} \textup(right\textup).
\begin{figure}[H]
$$\begin{tikzpicture}[scale=0.54]
    \filldraw[draw=black, fill=gray!20, line width=0.45mm] (0,0)--(4,4)--(0,4)--(0,0);
    \filldraw[draw=black, fill=gray!20, line width=0.45mm] (-0.1,4.1)--(-0.1,4.5)--(-0.5,4.5)--(-0.5,4.1)--(-0.1,4.1);
    
    \draw [thick] (-0.6,-0.6) to[bend left=10] (-0.6,4.6);
    \draw [thick] (4.6,-0.6) to[bend right=10] (4.6,4.6);
    
    \node at (-0.3,3.7){0};
    \node at (-0.3,3){0};
    \node at (-0.3,2.3){0};
    \node at (-0.3,1.65){$\vdots$};
    \node at (-0.3,1.1){0};
    \node at (-0.3,0.4){0};
    \node at (-0.3,-0.3){0};
    
    \node at (4.3,3.7){0};
    \node at (4.3,3){0};
    \node at (4.3,2.3){0};
    \node at (4.3,1.65){$\vdots$};
    \node at (4.3,1.1){0};
    \node at (4.3,0.4){0};
    \node at (4.3,-0.3){0};
    
    \node at (0.3,4.3){0};
    \node at (1,4.3){0};
    \node at (1.7,4.3){0};
    \node at (2.35,4.3){$\cdots$};
    \node at (2.9,4.3){0};
    \node at (3.6,4.3){0};
    \node at (4.3,4.3){0};
    
    \node at (0.3,-0.3){0};
    \node at (1,-0.3){0};
    \node at (1.7,-0.3){0};
    \node at (2.35,-0.3){$\cdots$};
    \node at (2.9,-0.3){0};
    \node at (3.6,-0.3){0};
    \node at (4.3,-0.3){0};
    
    \node at (3,1){\Huge{0}};
\end{tikzpicture}
\hspace{4em}
\begin{tikzpicture}[scale=0.6]
    \filldraw[draw=black, fill=gray!20, line width=0.45mm] (0,0)--(4,4)--(0,4)--(0,0);
    
    \draw [thick] (-0.1,-0.6) to[bend left=10] (-0.1,4.1);
    \draw [thick] (4.6,-0.6) to[bend right=10] (4.6,4.1);
    
    \node at (4.3,3.7){0};
    \node at (4.3,3){0};
    \node at (4.3,2.3){0};
    \node at (4.3,1.65){$\vdots$};
    \node at (4.3,1.1){0};
    \node at (4.3,0.4){0};
    \node at (4.3,-0.3){0};
    
    \node at (0.3,-0.3){0};
    \node at (1,-0.3){0};
    \node at (1.7,-0.3){0};
    \node at (2.35,-0.3){$\cdots$};
    \node at (2.9,-0.3){0};
    \node at (3.6,-0.3){0};
    \node at (4.3,-0.3){0};
    
    \node at (3,1){\Huge{0}};
\end{tikzpicture}
\hspace{4em}
\begin{tikzpicture}[scale=0.6]
    \filldraw[draw=black, fill=gray!20, line width=0.45mm] (0,0)--(4,4)--(0,4)--(0,0);
    \filldraw[draw=black, fill=gray!20, line width=0.45mm] (-0.1,4.1)--(-0.1,4.5)--(-0.5,4.5)--(-0.5,4.1)--(-0.1,4.1);
    
    \draw [thick] (-0.6,-0.1) to[bend left=10] (-0.6,4.6);
    \draw [thick] (4.1,-0.1) to[bend right=10] (4.1,4.6);
    
    \node at (-0.3,3.7){0};
    \node at (-0.3,3){0};
    \node at (-0.3,2.3){0};
    \node at (-0.3,1.65){$\vdots$};
    \node at (-0.3,1.1){0};
    \node at (-0.3,0.4){0};
    
    \node at (0.3,4.3){0};
    \node at (1,4.3){0};
    \node at (1.7,4.3){0};
    \node at (2.35,4.3){$\cdots$};
    \node at (2.9,4.3){0};
    \node at (3.6,4.3){0};
    
    \node at (3,1){\Huge{0}};
\end{tikzpicture}$$
\caption{Indices in $\mathscr{I}_{G_n}$ \textup(left\textup), $\mathscr{I}_{H_n}$ (center), and $\mathscr{I}_{K_n}$ \textup(right\textup)}
\label{Kn}
\end{figure}

Through an identical linear algebra argument to the one constructed in the proof of Theorem \ref{My Theorem 2}, we get four more functionals.
\begin{theorem}
\label{final group of regulars I swear}
The functionals $$G_n'=0\oplus F_{n-2}\oplus e_{1,1}^*,\hspace{4em}K_n'=F_{n-1}\oplus e_{1,1}^*,$$ $$H_n'=0\oplus F_{n-1},\hspace{2em}\text{ and }\hspace{2em}F_n'=0\oplus F_{n-2}\oplus 0$$ are regular on $\gl(n)$.
\end{theorem}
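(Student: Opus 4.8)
The goal is to show $\dim\ker(B_F)=n=\ind\gl(n)$ (Theorem \ref{Index Cor}) for each of $F\in\{G_n',K_n',H_n',F_n'\}$, valid in the same range of $n$ in which the unprimed versions are regular (so $n\ge 4$ for $G_n'$ and $F_n'$, by analogy with Theorem \ref{My Theorem 2}). The plan is to first reduce three of the four cases to the already-established Theorems \ref{My Theorem 2}, \ref{Functional 3}, and \ref{Functional 4} using the rotational symmetry of $\gl(n)$, and then to close the residual gap — and to treat $F_n'$, which has no unprimed counterpart — by the same kernel computation that underlies the proof of Theorem \ref{My Theorem 2}.

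For the reduction, I would write the rotation as the composition $R=\widehat{t}\circ t$ of the two transpositions appearing in Lemma \ref{New functions by Symmetry}. Since $\gl(n)=\gl(n)^t=\gl(n)^{\widehat{t}}$, that lemma yields $\dim\ker(B_G)=\dim\ker(B_{G^R})$ for every $G\in\gl(n)^*$; in particular $G_n^R$, $K_n^R$, and $H_n^R$ are regular. A direct bookkeeping of indices gives $G_n^R=0\oplus F_{n-2}^R\oplus e_{1,1}^*$, $K_n^R=F_{n-1}^R\oplus e_{1,1}^*$, and $H_n^R=0\oplus F_{n-1}^R$: each agrees with the corresponding primed functional except that its inner triangular block carries $F_{n-k}^R$ in place of $F_{n-k}$. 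As $\gl(n-k)$ is itself rotation-symmetric, Lemma \ref{New functions by Symmetry} also shows $F_{n-k}^R$ is regular on $\gl(n-k)$ with $\dim\ker(B_{F_{n-k}^R})=\dim\ker(B_{F_{n-k}})=\ind\gl(n-k)$. The problem thus reduces to a single claim: for these block-triangular functionals the kernel dimension depends on the inner diagonal block only through its (common) kernel dimension, so replacing $F_{n-k}^R$ by $F_{n-k}$ changes nothing.

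To establish that claim — and to handle $F_n'=0\oplus F_{n-2}\oplus 0$ — I would run the system of Lemma \ref{Syst of Eqs} directly, mirroring the proof of Theorem \ref{My Theorem 2}. One partitions a relations matrix $B=[b_{i,j}]$ of $\ker(B_F)$ according to the block decomposition $\{1\},\{2,\dots,n-1\},\{n\}$ (or the appropriate two-block split for $K_n'$ and $H_n'$). Because $\mathscr{I}_F$ is confined to one diagonal block together with at most a single corner entry $e_{1,1}^*$ or $e_{n,n}^*$, I expect the equations indexed by positions in the empty rows and columns to force the off-diagonal coupling blocks of $B$ to vanish, while the equations indexed by the inner block reproduce verbatim the kernel system of $F_{n-k}$ on $\gl(n-k)$. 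The solution space then splits as a direct sum; its inner factor has dimension $\ind\gl(n-k)$ regardless of which regular functional occupies that block, and the outer factors supply precisely the remaining degrees of freedom, so the total is $n$.

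The hard part will be this last bookkeeping step: verifying that the zero diagonal summands ($0\oplus$ and $\oplus 0$) together with the lone corner entries relax and constrain the coupling blocks in exactly the right proportion, so that the count lands on $n$ rather than overshooting. This is most delicate for $F_n'$, where both outer blocks are zero and the single $e^*$-entry that rigidifies the count for $G_n'$ is absent; it is exactly here that a lower bound on $n$ (mirroring the hypothesis $n\ge 4$ of Theorem \ref{My Theorem 2}) should be required, so that the inner block is large enough for its constraints to propagate outward and annihilate the coupling entries. The degenerate case $n=3$, where $F_3'=e_{2,2}^*$ has a five-dimensional kernel while $\ind\gl(3)=3$, shows concretely what fails when the inner block is too small.
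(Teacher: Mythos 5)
You should first know what you are being compared against: the paper contains no proof of this theorem at all. It is introduced with the sentence that the four functionals follow ``through an identical linear algebra argument'' to Theorem \ref{My Theorem 2}, whose own proof is omitted and deferred to \textbf{[10]}; the only substantive hint is the remark that a relations matrix of $\ker(B_f)$ should be a direct sum of relations matrices of $\ker(B_{F_{n-1}})$ or $\ker(B_{F_{n-2}})$ together with $1\times 1$ blocks $(b_i)$. Measured against that, your reduction of $G_n'$, $K_n'$, $H_n'$ to Theorems \ref{My Theorem 2}, \ref{Functional 3}, and \ref{Functional 4} via the rotation $R=\widehat{t}\circ t$ and Lemma \ref{New functions by Symmetry} is a genuinely different, and cleaner, route than redoing the linear algebra. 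One repair is needed, though: your ``single claim'' --- that the kernel dimension of these block functionals depends on the inner diagonal block only through its kernel dimension --- is false in that generality (replace the inner functional by a regular one whose coefficient matrix has $0$ or $1$ in its spectrum and the count changes). You do not need it: $F_{n-k}^R$ and $F_{n-k}$ sitting in the same inner block are carried to one another by conjugation with the permutation matrix that fixes the outer indices and reverses the inner ones, and conjugation by any $g\in GL(n;\C)$ is an automorphism of $\gl(n)$, so it preserves $\dim\ker(B_F)$ for every functional. With that substitution, your argument for these three functionals is complete, granting the unprimed theorems as the paper does.

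The $F_n'$ case is a genuine gap, and it cannot be closed, because your own $n=3$ computation is not a small-$n$ degeneracy but the general phenomenon. Run the system of Lemma \ref{Syst of Eqs} for $F_n'=0\oplus F_{n-2}\oplus 0$ (the reading of this notation is pinned down by the paper's size formula $|\mathscr{I}_{F_n}|=|\mathscr{I}_{F_n'}|+2n-1$): since rows $1$ and $n$ and columns $1$ and $n$ of $\mathscr{I}_{F_n'}$ are empty, the four entries $b_{1,1}$, $b_{1,n}$, $b_{n,1}$, $b_{n,n}$ of a relations matrix occur in no equation whatsoever. The equations do force the coupling strips $b_{1,j}$, $b_{j,1}$, $b_{n,j}$, $b_{j,n}$ ($2\le j\le n-1$) to vanish and do reproduce the $F_{n-2}$ system on the middle block, but all four corners stay free, so $\dim\ker(B_{F_n'})=4+(n-2)=n+2$ for every $n$; e.g.\ for $n=4$, $F_4'=e_{2,2}^*+e_{2,3}^*+e_{3,2}^*$ has a six-dimensional kernel while $\ind\gl(4)=4$. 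Conceptually: on $\gl(n)$ one has $X\in\ker(B_F)$ iff $X$ commutes with $C^t$, where $C$ is the coefficient matrix of $F$; the coefficient matrix of $F_n'$ annihilates both $e_1$ and $e_n$ (its middle block is invertible, having determinant $\pm1$), so $0$ is an eigenvalue of geometric multiplicity two, the matrix is derogatory, and its centralizer has dimension exceeding $n$. No lower bound on $n$ repairs this; contrast $H_n'$, where only one outer block is zero and the lone free corner $b_{1,1}$ supplies exactly the $n$-th degree of freedom, and $G_n'$, $K_n'$, where the corner term $e_{1,1}^*$ is precisely what kills $b_{1,n}$ and $b_{n,1}$. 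So the $F_n'$ clause of the theorem is false as written --- the paper's own relations-matrix remark overlooks exactly the free entries $b_{1,n}$ and $b_{n,1}$ --- and the final step of your plan, ``verifying that the count lands on $n$,'' is not a bookkeeping difficulty but an impossibility.
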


Let $f$ be a functional of Theorem \ref{final group of regulars I swear}. A relations matrices for $\ker(B_f)$ is an appropriate direct sum of relations matrices of $\ker(B_{F_{n-1}})$ and $\ker(B_{F_{n-2}})$, and the matrix $(b_i)$.

The transposition across the antidiagonal of any functional defined in this section and in section \ref{An Explicit Regular Functionals on gl(n)} is also regular on $\gl(n)$. Note that $F_n'$ is the smallest of the eight regular functionals thus far constructed.
\end{subsection}

\section{Regular functionals on simple Lie algebras}
\label{Semisimple Cases}

In this section, we transition from building regular functionals on seaweed subalgebras of $\gl(n)$ to building regular functionals on seaweed subalgebras of the classical Lie algebras $A_n=\mf{sl}(n+1)$ and $C_n=\mf{sp}(2n)$.




\begin{subsection}{Type-$A$ seaweeds}
\label{type A}



Seaweed subalgebras of $A_n$ are constructed in the same way as seaweeds in $\gl(n+1)$, but satisfy an additional algebraic constraint -- they have trace zero. 

\begin{definition}
Let $n$ be an integer and let $(a_1,\cdots,a_m)$ and $(b_1,\cdots,b_t)$ be two compositions of $n+1$. If the seaweed of type $\frac{a_1|\cdots|a_m}{b_1|\cdots|b_t}$ is further required to have trace zero, then this seaweed is said to be of Type-$A$ and is denoted $\p_n^A\;\frac{a_1|\cdots|a_m}{b_1|\cdots|b_t}$.
\end{definition}



\end{subsection}

To begin, we must know how restricting to algebras of trace zero affects the index of a seaweed.

\begin{theorem}[Dergachev and A. Kirillov, \textbf{[8]}]
\label{index A}
If $\g$ is a seaweed of type $\p_n^A\frac{a_1|\cdots|a_m}{b_1|\cdots|b_t}$, then 

$$\ind\g=2C+P-1,$$

\noindent 
 where $C$ is the number of cycles and $P$ is the number of paths and isolated points in the meander associated with $\g$.
\end{theorem}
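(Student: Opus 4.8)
The plan is to derive the Type-$A$ formula directly from the $\gl$-formula of Theorem \ref{Comb Formula} by analyzing the codimension-one passage from $\gl(n+1)$ to $\mathfrak{sl}(n+1)$. Write $\hat{\g}$ for the seaweed of type $\frac{a_1|\cdots|a_m}{b_1|\cdots|b_t}$ regarded inside $\gl(n+1)$, so that $\g=\hat\g\cap\mathfrak{sl}(n+1)$ is its trace-zero part. The crucial structural observations are that $\hat\g$ contains the identity matrix $I$ (every seaweed preserves the full flag of coordinate subspaces, and $I$ preserves each subspace), that $I$ is central in $\hat\g$, and that $\operatorname{tr} I=n+1\neq 0$. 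Consequently $\hat\g=\g\oplus\C I$ as a vector space, with $\C I$ a central line and $\g$ a codimension-one subalgebra; indeed any $X\in\hat\g$ splits as $X=\bigl(X-\tfrac{\operatorname{tr}X}{n+1}I\bigr)+\tfrac{\operatorname{tr}X}{n+1}I$. Since the meander depends only on the two compositions, $M(\g)=M(\hat\g)$, so $C$ and $P$ are the same for both algebras.

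The first step is the kernel comparison at the level of a single functional. Fix $f\in\hat\g^*$ and let $\bar f=f|_\g\in\g^*$. Because $I$ is central, $B_f(I,\cdot)=f([I,\cdot])=0$, so $\C I\subseteq\ker B_f$ for every $f$. Writing an arbitrary element of $\hat\g$ as $s+cI$ with $s\in\g$ and $c\in\C$, and using $[s+cI,y]=[s,y]$ for all $y\in\g$, one checks that $s+cI\in\ker B_f$ if and only if $s\in\ker B_{\bar f}$, with $c$ unconstrained. Thus $\ker B_f=\ker B_{\bar f}\oplus\C I$ and
\[
\dim\ker B_f=\dim\ker B_{\bar f}+1.
\]

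The second step is to pass to the minimum. The dual restriction map $\hat\g^*\to\g^*$, $f\mapsto f|_\g$, is surjective, so as $f$ ranges over $\hat\g^*$ the functional $\bar f$ ranges over all of $\g^*$. Minimizing the displayed identity over $f$ therefore gives $\ind\hat\g=\ind\g+1$. Combining this with Theorem \ref{Comb Formula}, which yields $\ind\hat\g=2C+P$ for the same $C$ and $P$, produces $\ind\g=2C+P-1$, as claimed. Here I use the convention already in force in Theorem \ref{Comb Formula} that each connected component of a meander is either a cycle or a path and that an isolated vertex is a trivial path (consistent with $\ind\gl(n)=n$ from Theorem \ref{Index Cor}); the phrase ``paths and isolated points'' in the statement merely makes this explicit.

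I do not anticipate a serious obstacle: the argument is in essence the observation that $\hat\g\cong\g\times\C I$ is a direct product with a one-dimensional abelian factor, for which the index is additive. The only points requiring genuine care are the centrality of $I$ in $\hat\g$ (immediate) and the surjectivity of the dual restriction used to exchange the minimum over $\hat\g^*$ for the minimum over $\g^*$; both are routine. Should one prefer, the single-functional computation can instead be packaged as index-additivity for direct products of Lie algebras, in the same spirit as Lemma \ref{direct sum}, together with $\ind(\C I)=1$ since $\C I$ is abelian.
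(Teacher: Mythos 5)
Your proof is correct. Be aware, though, that the paper does not prove this statement at all: it is quoted with attribution to Dergachev and Kirillov \textbf{[8]}, and the introduction only remarks that passing from $\gl(n)$ to $\mathfrak{sl}(n)$ is ``an easy extension.'' Your argument is a complete, self-contained version of exactly that extension, and every step checks out: the identity $I$ lies in every seaweed $\hat{\g}\subseteq\gl(n+1)$ and is central there; since $\operatorname{tr} I=n+1\neq 0$ you get the splitting $\hat{\g}=\g\oplus\C I$; for each $f\in\hat{\g}^*$ the identity $\ker B_f=\ker B_{\bar f}\oplus\C I$ holds because any bracket $[s+cI,\,t+dI]=[s,t]$ is a commutator, hence traceless, hence lies in $\g$, where $f$ and $\bar f$ agree; and the surjectivity of the restriction map $\hat{\g}^*\to\g^*$ is precisely what converts the pointwise identity $\dim\ker B_f=\dim\ker B_{\bar f}+1$ into $\ind\hat{\g}=\ind\g+1$, after which Theorem \ref{Comb Formula} gives the claim. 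Your remark about conventions is also the right reading: isolated vertices must already count as degenerate paths in Theorem \ref{Comb Formula}, since otherwise $\ind\gl(n)=n$ (Theorem \ref{Index Cor}) would fail for odd $n$, where the meander of type $\frac{n}{n}$ has an isolated middle vertex; the Type-$A$ statement merely makes this explicit. The nearest thing in the paper to your argument is the proof of Theorem \ref{functional on An}, which handles the trace-zero constraint by a direct relations-matrix computation for the specific functional $F_n$ (one diagonal entry becoming the negative sum of the others); your central-line decomposition is the general, functional-independent form of that idea, and, as you note, it can equivalently be packaged as index additivity for the Lie algebra direct sum $\hat{\g}=\g\oplus\C I$, in the spirit of Lemma \ref{direct sum} and Theorem \ref{ds regulars}.
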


We have the following immediate Corollary.

\begin{theorem}
\label{index of A_n}
The Lie algebra $A_n$ has index $n$.
\end{theorem}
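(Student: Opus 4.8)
The plan is to realize $A_n=\mf{sl}(n+1)$ as the seaweed $\p_n^A\frac{n+1}{n+1}$ --- the full trace-zero subalgebra of $\gl(n+1)$ obtained from the trivial compositions $(n+1)$ of $n+1$ on both top and bottom --- and then to read its index directly off the formula of Theorem \ref{index A} by analyzing the associated meander $M(A_n)$ of type $\frac{n+1}{n+1}$. Since the statement is advertised as an immediate corollary, essentially all the work is in correctly describing this meander and counting its components.

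First I would describe $M(A_n)$ explicitly. It has $n+1$ vertices $v_1,\dots,v_{n+1}$, and because the single top block and the single bottom block both have size $n+1$, the edge-insertion rule of Section \ref{Seaweed Subalgebras} produces, for each $i$ with $i<n+2-i$, \emph{both} a top edge and a bottom edge joining $v_i$ to $v_{n+2-i}$. Hence every such pair $\{v_i,v_{n+2-i}\}$ carries a doubled edge and forms a $2$-cycle, and the only further structure depends on the parity of $n+1$.

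Next I would split into the two parity cases and tally the number of cycles $C$ and the number $P$ of paths and isolated points. If $n$ is odd, then $n+1$ is even, every vertex is matched, and $M(A_n)$ is a disjoint union of $(n+1)/2$ two-cycles, giving $C=(n+1)/2$ and $P=0$. If $n$ is even, then $n+1$ is odd, the central vertex $v_{(n+2)/2}$ receives no edge, and $M(A_n)$ consists of $n/2$ two-cycles together with one isolated point, giving $C=n/2$ and $P=1$. Applying Theorem \ref{index A}, namely $\ind\g=2C+P-1$, yields $2\cdot(n+1)/2+0-1=n$ in the first case and $2\cdot(n/2)+1-1=n$ in the second, which establishes the claim.

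I do not expect a genuine obstacle here: the result follows immediately from Theorem \ref{index A} once the meander is identified. The only points requiring care are the parity bookkeeping and the observation that, when $n$ is even, the unpaired central vertex must be counted toward $P$ as an isolated point under the convention of Theorem \ref{index A}. (As a consistency check, one could instead note that $\gl(n+1)=\mf{sl}(n+1)\oplus\C I$ and that Theorem \ref{Index Cor} gives $\ind\gl(n+1)=n+1$, from which subtracting the one-dimensional center recovers $\ind A_n=n$; but the meander computation above is self-contained and matches the intended corollary structure.)
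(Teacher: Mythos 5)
Your proposal is correct and follows exactly the route the paper intends: the paper states this result as an immediate corollary of Theorem \ref{index A}, i.e., one views $A_n$ as the Type-$A$ seaweed of type $\frac{n+1}{n+1}$, whose meander consists of doubled-edge $2$-cycles plus (when $n+1$ is odd) one isolated point, and the formula $2C+P-1$ gives $n$ in both parity cases. Your parity bookkeeping and the counting of the unmatched central vertex as an isolated point in $P$ are precisely the details the paper leaves implicit.
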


\begin{theorem}
\label{functional on An}
The functional $F_n=\sum_{i=1}^n\sum_{j=1}^{n+1-i}e_{i,j}^*$ of Theorem \ref{My Theorem 1} is regular on $A_n$.
\end{theorem}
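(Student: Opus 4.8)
The plan is to deduce regularity on $A_n$ from the already-established regularity of the \emph{same} functional $F_n$ on $\gl(n+1)$, using a short comparison between the radical of the Kirillov form on $\gl(n+1)$ and its restriction to the trace-zero hyperplane. The first observation I would make is that $F_n=\sum_{i=1}^{n}\sum_{j=1}^{n+1-i}e_{i,j}^{*}$ is literally the functional $H_{n+1}$ in the notation of Theorem \ref{Functional 3} (replace $n$ by $n+1$ there), so $F_n$ is regular on $\gl(n+1)$ and hence $\dim\ker B_{F_n}=\ind\gl(n+1)=n+1$ by Theorem \ref{Index Cor}. Writing $V=\gl(n+1)$, $B=B_{F_n}$ for the Kirillov form on $V$, and $R=\ker B$ (so $\dim R=n+1$), I set $W=\mathfrak{sl}(n+1)=A_n$, a codimension-one subalgebra. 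Since $[x,y]\in W$ for $x,y\in W$, the Kirillov form attached to $F_n|_W$ is exactly the restriction $B|_{W\times W}$, so the goal is to compute $\dim\ker(B|_W)$.

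The key structural fact is that the identity matrix $I=\sum_{k=1}^{n+1}e_{k,k}$ is central in $\gl(n+1)$, so $B(I,\cdot)=F_n([I,\cdot])=0$ and thus $I\in R$; moreover $\mathrm{tr}(I)=n+1\neq 0$, which forces $I\notin W$ and gives the decomposition $V=W\oplus\mathbb{C}I$. Because $I\in R\setminus W$, the intersection $R\cap W$ is the kernel of the nonzero trace functional restricted to $R$, whence $\dim(R\cap W)=(n+1)-1=n$. I would then prove $\ker(B|_W)=R\cap W$. The inclusion $R\cap W\subseteq\ker(B|_W)$ is immediate. For the reverse, take $w\in W$ with $B(w,W)=0$; for arbitrary $v\in V$ write $v=w'+\lambda I$ with $w'\in W$, and compute $B(w,v)=B(w,w')+\lambda B(w,I)$, where $B(w,w')=0$ by hypothesis and $B(w,I)=F_n([w,I])=0$ by centrality of $I$. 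Hence $B(w,V)=0$, i.e.\ $w\in R$, so $\ker(B|_W)\subseteq R\cap W$.

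Combining these, $\dim\ker B_{F_n}^{A_n}=\dim(R\cap W)=n$, which equals $\ind A_n$ by Theorem \ref{index of A_n}, and therefore $F_n$ is regular on $A_n$. The argument is short, and I expect the only delicate point to be the identity $\ker(B|_W)=R\cap W$: passing to the trace-zero hyperplane must strip off exactly the one central direction $\mathbb{C}I$ from the radical and nothing more. This is precisely where centrality of $I$ (yielding $B(w,I)=0$ for every $w$) is indispensable, so it is the step I would write out carefully rather than treat as routine.
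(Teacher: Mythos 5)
Your proposal is correct, but it takes a genuinely different route from the paper's. The paper argues by direct computation against $\gl(n)$, one matrix size \emph{down}: it shows that the linear system cutting out $\ker(B_{F_n})$ in $A_n$ --- the equations from the off-diagonal basis elements $e_{i,j}$ with $i,j\le n$, from the traceless diagonal elements $e_{i,i}-e_{i+1,i+1}$ (combined with the automatic relation $B_{F_n}(B,e_{n+1,n+1})=0$), and from the border elements $e_{i,n+1}$, $e_{n+1,i}$ --- collapses to the $\gl(n)$ system of Theorem \ref{My Theorem 1} together with the vanishing conditions $b_{i,n+1}=b_{n+1,i}=0$, yielding the explicit relations matrix $B\oplus(a)$ with $a=-\sum_{i=1}^n b_{i,i}$. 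You instead work one size \emph{up}: identifying $F_n$ with $H_{n+1}$ (Theorem \ref{Functional 3}) gives regularity on $\gl(n+1)$, and since the central element $I$ lies in the radical $R$ of $B_{F_n}$ on $\gl(n+1)$ but not in the hyperplane $W=\mathfrak{sl}(n+1)$, restriction strips off exactly the line $\C I$, giving $\ker(B|_{W})=R\cap W$ of dimension $n=\ind A_n$ by Theorem \ref{index of A_n}. Your hyperplane argument is airtight --- the two inclusions are exactly as you say, with centrality of $I$ doing all the work --- and it in fact proves a more general statement than the theorem: \emph{every} regular functional on $\gl(n+1)$ restricts to a regular functional on $\mathfrak{sl}(n+1)$, because $I$ always lies in the radical of the Kirillov form. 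What the paper's computation buys, by contrast, is (i) self-containedness: it rests only on Theorem \ref{My Theorem 1}, proved in Appendix A, whereas your argument leans on Theorem \ref{Functional 3}, whose proof is omitted from the paper and deferred to \textbf{[10]}; and (ii) the explicit relations matrix $B\oplus(a)$, including the trace constraint encoded in $a$, which is reused downstream in the Type-$A$ framework (the proof of Theorem \ref{reg A func} and Example \ref{finalex AA}).
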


\begin{proof}
The proof will consist of showing the equivalence of the systems of equations for $B_{F_n}(B,b)$ and $B_{F_n}(B,b')$ (for the sets of basis elements $b$ of $\gl(n)$ and basis elements $b'$ of $\sl(n+1)$).

For all $i\neq j$ with $i,j\leq n$, we have 
\begin{equation}
\label{eqa}
    e_{i,j}\mapsto\left(\sum_{s=1}^{n+1-j}b_{s,i}-\sum_{s=1}^{n+1-i}b_{j,s}\right).
\end{equation} 
The system of equations which results when the expressions in (\ref{eqa}) are evaluated at zero is identical to the system of equations for the respective images for these basis elements in $\gl(n)$. Now, consider the basis elements $e_{i,i}-e_{i+1,i+1}$ with $i\leq n$. By requiring $B_{F_n}(B,e_{i,i}-e_{i+1,i+1})=0$, we get the weaker condition that $B_{F_n}(B,e_{i,i})=B_{F_n}(B,e_{i+1,i+1})$, for all $i$. However, we have

$$B_{F_n}(B,e_{n+1,n+1})=0.$$

\noindent 
Therefore, the system of equations for the $n^2$ basis elements of $A_n$ noted in Equation (\ref{eqa}) is isomorphic to the system of equations on $\gl(n)$. It suffices to address the last $2n$ basis elements $e_{i,n+1}$ and $e_{n+1,i}$ for $i\neq n+1$. Consider the image of the first $n$ basis elements under $B_{F_n}([B,\cdot])$: 
\begin{equation}
\label{AAA}
    e_{i,n+1}\mapsto-\sum_{s=1}^{n+1-i}b_{n+1,s}.
\end{equation} 
By induction, we get $b_{n+1,i}=0$, for all $i\in[1,n]$. The argument is similar for $b_{i,n+1}=0$, for all $i\in[1,n]$, or Lemma \ref{zero symmetry} may be applied. The resulting relations matrix of $\ker(B_{F_n})$ on $A_n$ is $B\oplus (a)$, where $B$ is a relations matrix of $\ker(B_{F_n})$ defined on $\gl(n)$ and $a=-\sum_{i=1}^n b_{i,i}$.
\end{proof}

\noindent 
Note that $F_n$ defined on $A_n$ is the sum of functionals $e_{i,j}^*$ {strictly above} the antidiagonal.

\begin{theorem}
\label{reg A func}
If $\p_n^A \frac{a_1|\cdots|a_m}{b_1|\cdots|b_t}$ is a seaweed with homotopy type $H(c_1,\cdots,c_h)$, any funcitonal $\oF$ built using Definition \ref{Functional Construction def} with functionals $f_{c_i-1}$ embedded into the components of size $c_i$ is such that 

$$\dim\ker(B_{\oF})=-1+\sum_{i=1}^h\dim\ker(B_{f_{c_i-1}}),$$

\noindent 
where $\dim\ker(B_{\oF})$ is over $\mf{sl}(n+1)$, but $\dim\ker(B_{f_{c_i}})$ is the dimension of the kernel in $\gl(c_i)$.
\end{theorem}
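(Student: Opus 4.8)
The plan is to reduce the Type-$A$ statement to the $\gl$-case already settled in Theorem \ref{Functional Construction}, using the central identity matrix to account for the extra $-1$. Throughout, write $\tilde{\g}\subseteq\gl(n+1)$ for the $\gl$-seaweed of type $\frac{a_1|\cdots|a_m}{b_1|\cdots|b_t}$, so that $\g:=\p_n^A\frac{a_1|\cdots|a_m}{b_1|\cdots|b_t}$ is exactly its trace-zero part. Since every seaweed contains the full diagonal, the identity matrix $I$ lies in $\tilde{\g}$, and because $\mathrm{tr}(I)=n+1\neq 0$ we get the vector-space decomposition $\tilde{\g}=\g\oplus\C I$.

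First I would run Definition \ref{Functional Construction def} over $\gl(n+1)$ with the building blocks $f_{c_i-1}\in\gl(c_i)^*$. Theorem \ref{Functional Construction}, sharpened by Theorem \ref{kernel}, then yields that, computed over $\tilde{\g}$, one has $\dim\ker(B_{\oF})=\sum_{i=1}^h\dim\ker(B_{f_{c_i-1}})$ (the inner dimensions taken in $\gl(c_i)$) and that a relations matrix of $\ker(B_{\oF})$ is a direct sum of suitably rotated core-block relations matrices supported on the $c_i\times c_i$ diagonal core blocks. Two consequences will drive the argument. The index sets $A_I$ partition $\{1,\dots,n+1\}$ and each lies on some component path of $CM(\g)$, so the core blocks $A_I\times A_I$ tile the entire main diagonal. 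Moreover, the identity $I_{c}$ of each block $\gl(c)$ is central, hence lies in $\ker(B_{f_{c-1}})$; assembling these block identities gives $I=\bigoplus_i I_{c_i}$, which is therefore represented by the relations matrix. Thus $I\in\ker(B_{\oF})$ over $\tilde{\g}$.

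Next I would compare the two kernels. For $x\in\g$ the form $B_{\oF}(x,I)=\oF([x,I])=0$ automatically, since $I$ is central; because $\tilde{\g}=\g\oplus\C I$, the requirement that $B_{\oF}(x,\cdot)$ vanish on all of $\tilde{\g}$ is therefore equivalent to its vanishing on $\g$ alone. Hence $\ker(B_{\oF})$ over $\g$ equals $\ker(B_{\oF})\cap\g$ over $\tilde{\g}$, i.e. the Type-$A$ kernel is precisely the trace-zero part of the $\gl$-kernel. Since $I$ lies in the latter and $\mathrm{tr}(I)\neq 0$, the trace functional is nonzero on $\ker(B_{\oF})$ over $\tilde{\g}$, so passing to its trace-zero subspace lowers the dimension by exactly one:
$$\dim\ker(B_{\oF})=-1+\sum_{i=1}^h\dim\ker(B_{f_{c_i-1}}),$$
with $\dim\ker(B_{\oF})$ over $\mf{sl}(n+1)$ and each $\dim\ker(B_{f_{c_i-1}})$ over $\gl(c_i)$, as claimed. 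This mirrors the single-component computation in the proof of Theorem \ref{functional on An}, where the lost degree of freedom surfaced as the forced diagonal entry $a=-\sum_i b_{i,i}$.

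The main obstacle is confirming that the drop is \emph{exactly} one rather than zero: this rests on two centrality observations, namely that $I$ genuinely belongs to the $\gl$-kernel (from centrality of each block identity together with the fact that the core blocks cover the whole diagonal) and that restricting the test vectors $y$ from $\tilde{\g}$ to $\g$ imposes no new relations (again from centrality of $I$). Once both are in place, the dimension bookkeeping is immediate, and no separate induction on the winding-down moves of Lemma \ref{winding down} is required beyond the one already carried out for the $\gl$-case.
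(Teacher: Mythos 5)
Your proof is correct, but it follows a genuinely different route from the paper's. The paper re-runs the winding-down induction of Theorem \ref{Functional Construction} with Type-$A$ modifications: it tracks the vanishing-trace condition through the relations matrix, observing that one diagonal entry $b_{t,t}$ is forced to equal the negative sum of the remaining diagonal entries, which costs exactly one degree of freedom. You instead apply Theorem \ref{Functional Construction} verbatim to the ambient $\gl(n+1)$-seaweed $\tilde{\g}$ and then handle the passage to the trace-zero part abstractly: writing $\tilde{\g}=\g\oplus\C I$, centrality of $I$ gives both that the kernel of $B_{\oF}$ computed over $\g$ equals $\ker(B_{\oF})\cap\g$ with the kernel computed over $\tilde{\g}$, and that $I$ itself lies in the $\gl$-kernel; since $\mathrm{tr}(I)=n+1\neq 0$, the trace functional is nonzero on that kernel, so intersecting with the trace-zero hyperplane drops the dimension by exactly one. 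This is cleaner than the paper's argument: no induction needs to be repeated, and the $-1$ is identified conceptually with the center of $\gl(n+1)$; it even yields the more general fact that for an \emph{arbitrary} functional on a $\gl$-seaweed, restricting to the trace-zero subalgebra always lowers the kernel dimension by exactly one. What the paper's approach buys in exchange is explicit control of the relations matrix on the Type-$A$ seaweed (where the dependent diagonal entry sits and how the core-block copies share variables), which is what makes computations such as Example \ref{finalex AA} immediate. One small remark: your verification that $I$ lies in the $\gl$-kernel via block identities and the tiling of the diagonal by core blocks is unnecessarily roundabout, since $I$ is central in $\gl(n+1)$, so $B_F(I,\cdot)=F([I,\cdot])\equiv 0$ for every functional $F$, with no reference to the core structure at all.
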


\begin{proof}
The proof is similar to the proof of Theorem \ref{Functional Construction}. We apply induction on the winding-down moves of Lemma \ref{winding down} to place appropriately adjusted copies of $\ker(B_{f_{c_i}})$ into the core of $\g$. The difference in Type-$A$ is that in the first Component Creation move in the winding-up of the meander associated with $\g$, there is an index $t$ such that $b_{t,t}$ is the negative sum of the diagonal to ensure the vanishing trace condition of $A_n$ (this was $b_{n+1,n+1}$ for $B$ a relations matrix of $F_n$ in Theorem \ref{functional on An}). Under the winding-up moves, we map the functional as we did in Definition \ref{Functional Construction def}, but the kernel adjustments must be such that the sum of all instances of $b_{t,t}$ on the diagonal maintains the vanishing trace condition.
\end{proof}

This completely resolves the problem of naming regular functionals for seaweed subalgebras of Type-$A$. See Example \ref{finalex AA}.

\begin{ex}
\label{finalex AA}
 Consider $\g=\p_7^A\frac{4|4}{8}$. According to Theorem \ref{reg A func} \textup(and embedding $F_{3}$ of Theorem \ref{functional on An}\textup) yields functionals
 \vspace{-1em}
 \begin{center}
    \scalebox{0.8}{
     \begin{tabular}{c}
     $\oF^a=e_{1,1}^*+e_{1,2}^*+e_{1,3}^*+e_{1,8}^*+e_{2,1}^*+e_{2,2}^*+e_{2,7}^*+e_{3,1}^*+e_{3,6}^*+e_{4,5}^*+e_{6,8}^*+e_{7,7}^*+e_{7,8}^*+e_{8,6}^*+e_{8,7}^*+e_{8,8}^*,$ \\
     $\oF=e_{1,1}^*+e_{1,2}^*+e_{1,3}^*+e_{1,5}^*+e_{2,1}^*+e_{2,2}^*+e_{2,6}^*+e_{3,1}^*+e_{3,7}^*+e_{4,8}^*+e_{6,8}^*+e_{7,7}^*+e_{7,8}^*+e_{8,6}^*+e_{8,7}^*+e_{8,8}^*,$
     \end{tabular}
     }
 \end{center} 
 The indices for $\oF^a$ and $\oF$ are shown in Figure \ref{finalex A} \textup(left and right, respectively\textup).
 
 \begin{figure}[H]
$$\begin{tikzpicture}[scale=0.3]
    \def\Node{\node [circle, fill, inner sep=1.5pt]}
    \draw (0,0)--(0,8)--(8,8)--(8,0)--(0,0);
    \draw [line width=0.45mm] (0,8)--(0,4)--(4,4)--(4,0)--(8,0)--(8,8)--(0,8);
    \Node at (0.5,7.5){};
    \Node at (1.5,7.5){};
    \Node at (2.5,7.5){};
    \Node at (0.5,6.5){};
    \Node at (1.5,6.5){};
    \Node at (0.5,5.5){};
    \Node at (4.5,4.5){};
    \Node at (5.5,5.5){};
    \Node at (6.5,6.5){};
    \Node at (7.5,7.5){};
    \Node at (7.5,2.5){};
    \Node at (6.5,1.5){};
    \Node at (7.5,1.5){};
    \Node at (5.5,0.5){};
    \Node at (6.5,0.5){};
    \Node at (7.5,0.5){};
\end{tikzpicture}
\hspace{5em}
\begin{tikzpicture}[scale=0.3]
    \def\Node{\node [circle, fill, inner sep=1.5pt]}
    \draw (0,0)--(0,8)--(8,8)--(8,0)--(0,0);
    \draw [line width=0.45mm] (0,8)--(0,4)--(4,4)--(4,0)--(8,0)--(8,8)--(0,8);
    \Node at (0.5,7.5){};
    \Node at (1.5,7.5){};
    \Node at (2.5,7.5){};
    \Node at (0.5,6.5){};
    \Node at (1.5,6.5){};
    \Node at (0.5,5.5){};
    \Node at (4.5,7.5){};
    \Node at (5.5,6.5){};
    \Node at (6.5,5.5){};
    \Node at (7.5,4.5){};
    \Node at (4.5,3.5){};
    \Node at (5.5,3.5){};
    \Node at (6.5,3.5){};
    \Node at (4.5,2.5){};
    \Node at (5.5,2.5){};
    \Node at (4.5,1.5){};
\end{tikzpicture}$$
\caption{Indices $\mathscr{I}_{\oF^a}$ \textup(left\textup) and $\mathscr{I}_{\oF}$ \textup(right\textup) on $\p_7^A\frac{4|4}{8}$}
\label{finalex A}
\end{figure}
A messy calculation yields relations matrices $B$ and $B'$ of $\ker(B_{\oF^a})$ $\ker(B_{\oF})$, respectively.

$$
B=\left(\begin{array}{ccc}
b_1+b_2+b_3&b_1+b_2&b_1\\
b_1+b_2&b_1+b_3&b_2\\
b_1&b_2&b_3
\end{array}\right)
\bigoplus 
(-2b_1-b_2-3b_3)
\bigoplus 
(-2b_1-b_2-3b_3)
\bigoplus 
\left(\begin{array}{ccc}
b_3&b_2&b_1\\
b_2&b_1+b_3&b_1+b_2\\
b_1&b_1+b_2&b_1+b_2+b_3
\end{array}\right)
$$

$$
B'=\left(\begin{array}{ccc}
b_1+b_2+b_3&b_1+b_2&b_1\\
b_1+b_2&b_1+b_3&b_2\\
b_1&b_2&b_3
\end{array}\right)
\bigoplus 
(-2b_1-b_2-3b_3)
\bigoplus 
\left(\begin{array}{ccc}
b_1+b_2+b_3&b_1+b_2&b_1\\
b_1+b_2&b_1+b_3&b_2\\
b_1&b_2&b_3
\end{array}\right)
\bigoplus 
(-2b_1-b_2-3b_3)
$$

Evidently $\ind\g=3$,  it follows that $\oF^a$ and $\oF$ are regular.
\end{ex}

\begin{subsection}{Type-$C$ seaweeds}
\label{type C}

Seaweed subalgebras of $C_n$ are constructed as in $\gl(2n)$, but they must also be subalgebras of $\mf{sp}(2n)$. Because of the symmetry across the antidiagonal of $C_n$, we have a simplified notation for seaweeds of type-$C$.

\begin{definition}
Given two partial compositions $(a_1,\cdots,a_m)$ and $(b_1,\cdots,b_t)$ of $n$ \textup(i.e., $\sum_{i=1}^ma_i,\sum_{i=1}^tb_i\leq n$\textup), let $\g$ be the seaweed of type $\frac{a_1|\cdots|a_m|2(n-\sum_{i=1}^ma_i)|a_m|\cdots|a_1}{b_1|\cdots|b_t|2(n-\sum_{i=1}^tb_i)|b_t|\cdots|b_1}$ which is a subalgebra of $C_n$. This is the standard seaweed of type $\p_n^C\frac{a_1|\cdots|a_m}{b_1|\cdots|b_t}$.
\end{definition}

\begin{ex}
Consider the seaweed $\g=\p_3^C\frac{3}{2}$. This is the set of all matrices in $\mf{sp}(6)$ whose nonzero entries occur in the marked entries of Figure \ref{exC1}.
\begin{figure}[H]
$$\begin{tikzpicture}[scale=0.5]
\draw (0,0)--(0,6)--(6,6)--(6,0)--(0,0);
\draw [dotted](0,0)--(6,6);
\draw [dotted] (0,6)--(6,0);
\draw [dotted] (3,0)--(3,6);
\draw [dotted] (0,3)--(6,3);
\draw [line width=0.45mm](0,6)--(0,3)--(3,3)--(3,0)--(6,0)--(6,2)--(4,2)--(4,4)--(2,4)--(2,6)--(0,6);
\node at (0.5,5.3){\LARGE *};
\node at (0.5,4.3){\LARGE *};
\node at (0.5,3.3){\LARGE *};
\node at (1.5,5.3){\LARGE *};
\node at (1.5,4.3){\LARGE *};
\node at (1.5,3.3){\LARGE *};
\node at (2.5,3.3){\LARGE *};
\node at (3.5,3.3){\LARGE *};
\node at (3.5,2.3){\LARGE *};
\node at (3.5,1.3){\LARGE *};
\node at (3.5,0.3){\LARGE *};
\node at (4.5,1.3){\LARGE *};
\node at (4.5,0.3){\LARGE *};
\node at (5.5,1.3){\LARGE *};
\node at (5.5,0.3){\LARGE *};
\node[label=left:{3}] at (0,4.5){};
\node[label=left:{3}] at (3,1.5){};
\node[label=above:{2}] at (1,6){};
\node[label=above:{2}] at (3.5,4){};
\node[label=above:{2}] at (5,1.8){};
 \end{tikzpicture}$$
\caption{Seaweed of type $\p_3^C\frac{3}{2}$}
    \label{exC1}
\end{figure}
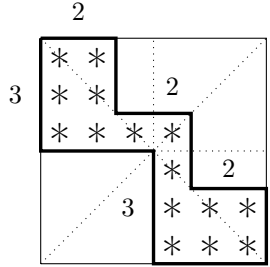
\end{ex}


To ease computations, we can leverage the symmetry across the antidiagonal of a seaweed subalgebra of $C_n$, and make use of a meander on $n$ vertices instead of the full $2n$ vertices that a seaweed subalgebra of $\gl(2n)$ would normally require. 


\begin{definition}
\label{smaller meander}
Let $\g=\p_n^C\frac{a_1|\cdots|a_m}{b_1|\cdots|b_t}$. The shortened meander associated with $\g$ \textup(denoted $M_n^C$ to differentiate it from the meander on $2n$ vertices\textup) is constructed as follows. Place $n$ vertices $v_1$ through $v_n$ in a line. Create two partitions \textup(top and bottom\textup) of the vertices based on the given partial compositions of $n$. Draw arcs in the first $M(\g)$ top blocks  and the first $t$ bottom blocks as you would a meander in $\gl(n)$. There may be vertices left over. We define the following sets: $T_a=\{v_i\;|\;i>\sum_{i=1}^ma_i\}$ and $T_b=\{v_i\;|\;i>\sum_{i=1}^tb_i\}$. The set $T_\g=(T_a\cup T_b)\backslash(T_a\cap T_b)$ is the {tail} of $M_n^C$. The {aftertail} $T_\g^a$ of the meander is $T_a\cap T_b$. See example \ref{exC}.
\end{definition}

\begin{ex}
\label{exC}
 Consider the seaweed $\g=\p_7^C\frac{1|1|3}{3|3}$. The meander $M_7^C$ associated with $\g$ is illustrated in Figure \ref{meander C}.
\begin{figure}[H]
$$\begin{tikzpicture}[scale=0.7]
    \def\Node{\node [circle, fill, inner sep=2pt]}
    \Node (A) at (10,0){};
    \Node (B) at (11,0){};
    \Node (C) at (12,0){};
    \Node (D) at (13,0){};
    \Node (E) at (14,0){};
    \Node [blue] (F) at (15,0){};
    \Node [red] (G) at (16,0){};
    
    \draw (C) to[bend left=60](E);
    \draw (A) to[bend right=60](C);
    \draw (D) to[bend right=60](F);
\end{tikzpicture}$$
\caption{Meander $M_7^C$ associated with $\p_7^C(\{\alpha_1,\alpha_2,\alpha_5\}\;|\;\{\alpha_3,\alpha_6\})$}
\label{meander C}
\end{figure}
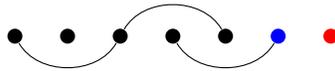
By definition, we have $T_a=\{v_6,v_7\},$ $T_b=\{v_7\},$ $T_\g=\{v_6\},$ and $T_\g^a=\{v_7\}.$ The vertices in $T_\g$ and $T_\g^a$ are colored blue and red, respectively, in Figure \ref{meander C}.
\end{ex}

When describing how to construct a regular functional, we first address any component of the meander $M_n^C$ which interacts with the tail, followed by the aftertail. From there, the embedding of functionals $F_n$ will be as it is in $\gl(n)$ for any part of the meander which remains unaddressed.

To begin, we must know how restricting to algebras in $\mf{sp}(2n)$ affects the index of a seaweed.

\begin{theorem}[Coll, Hyatt, and Magnant \textbf{[5]}]
\label{comp index C}
If $\g$ is a seaweed of type $\p_n^C\frac{a_1|\cdots|a_m}{b_1|\cdots|b_t}$, then 

$$\ind\g=2C+\Tilde{P},$$

\noindent 
where $C$ is the number of cycles and $\Tilde{P}$ is the number of paths with zero or two endpoints in the tail of the meander $M_n^C$ associated with $\g$.
\end{theorem}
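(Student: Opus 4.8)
The plan is to adapt the path-following computation of Dergachev and A.~Kirillov behind Theorem~\ref{Comb Formula} to the symplectic algebra, but to read all the combinatorics off the shortened meander $M_n^C$ rather than the full meander on $2n$ vertices. The starting point is that every $x\in\g\subseteq\mf{sp}(2n)$ is determined by its entries on one side of the antidiagonal: the symplectic condition forces $x_{i,j}=\epsilon_{i,j}\,x_{2n+1-j,\,2n+1-i}$ for a fixed sign $\epsilon_{i,j}\in\{\pm1\}$ coming from the symplectic form. I would fix a representative functional $f\in\g^*$ supported exactly on the matrix positions corresponding to the edges of $M_n^C$ together with the tail positions, and compute $\ker(B_f)$ using Lemma~\ref{Syst of Eqs}. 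As in the $\gl$ case, each basis element of $\g$ contributes a linear ``transport'' equation relating the entry of $x$ at one meander position to the entry at the position reached by crossing a single edge of $M_n^C$; the novelty is that whenever an edge is a central (antidiagonal-crossing) arc, the symplectic sign $\epsilon_{i,j}$ inserts a factor of $-1$ into that transport equation.

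Next I would follow these transport equations along each connected component of $M_n^C$. Along a cycle the equations close up consistently and leave a two-parameter family of solutions, contributing $2$ just as in $\gl(n)$. Along a path the two endpoints impose boundary conditions, and this is where the tail is decisive. An endpoint lying in the body or in the aftertail imposes the same boundary condition as in $\gl(n)$, whereas an endpoint in the tail $T_\g$ corresponds, in the full $2n$-vertex picture, to an arc crossing the antidiagonal and hence carries one symplectic sign twist. Transporting the sign from one end of the path to the other, a path with $0$ or $2$ tail endpoints accumulates an even number of twists, the two boundary conditions remain compatible, and a one-parameter family of solutions survives (contributing $1$); a path with exactly one tail endpoint accumulates a single twist, the boundary conditions become contradictory, and the only solution supported on that path is zero (contributing $0$). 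Summing over components yields $\dim\ker(B_f)=2C+\tilde P$, and a genericity (maximal-rank) argument for $f$, identical in spirit to the $\gl$ case, shows this value is the minimum and hence equals $\ind\g$.

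The main obstacle is precisely the tail bookkeeping: one must verify that the transported symplectic sign $\epsilon_{i,j}$ is compatible with a nonzero path solution exactly when the number of tail endpoints is even. This is the step that uses the antisymmetry of the symplectic form in an essential way -- it is the single sign twist at an odd center-crossing that turns a would-be free parameter into a genuine obstruction -- and it is what separates the Type-$C$ count $\tilde P$ from the plain path count $P$ of Theorem~\ref{Comb Formula}. A small separate check handles the aftertail vertices, which sit at antidiagonal-fixed positions: an isolated aftertail vertex is a path with no tail endpoints and must be confirmed to contribute $1$, which is consistent with the value $\ind\mf{sp}(2n)=n$ recovered in the case of empty compositions, where $M_n^C$ consists of $n$ isolated aftertail vertices and no cycles.
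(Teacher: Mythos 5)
First, a structural point: the paper does not prove this statement at all. It is imported as a citation from Coll--Hyatt--Magnant \textbf{[5]} (cf. Panyushev--Yakimova \textbf{[17]}) and used as a black box (e.g., to deduce Theorem \ref{C cor} and to check the index in the type-$C$ examples). So there is no in-paper proof to compare yours against, and the cited proofs proceed by induction on meander reduction (``winding-down'') moves in the spirit of Lemma \ref{winding down} and Panyushev's inductive index formulas -- not by a direct kernel computation for a distinguished functional. Your route, adapting the Dergachev--Kirillov transport computation to $\mf{sp}(2n)$ and reading the count off $M_n^C$, is therefore genuinely different from how the result is actually established in the literature. The mechanism you describe is the right heuristic: for instance, for $\p_1^C\frac{1}{\phantom{1}}$ (the Borel of $\mf{sl}(2)$, a path with exactly one tail endpoint) the $\gl$-kernel along the path is spanned by the identity, which the symplectic antisymmetry kills, giving index $0$ exactly as your one-twist rule predicts; and the $n$ isolated aftertail vertices of $C_n$ itself correctly return $\ind C_n=n$.

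Nevertheless there are two genuine gaps. First, computing $\dim\ker(B_f)$ for a functional $f$ supported on the meander positions -- even with generic coefficients on that support -- only yields the upper bound $\ind\g\leq 2C+\tilde{P}$, since $\dim\ker(B_g)\geq\ind\g$ for every $g$. The lower bound, i.e., that \emph{no} functional (in particular no full-support functional) does better, is the real content; it requires running the transport system with indeterminate coefficients over all admissible positions of $\g$ and showing the corank over the function field is exactly $2C+\tilde{P}$. Your closing appeal to a genericity argument ``identical in spirit to the $\gl$ case'' waves at precisely this step, and it cannot be waved at: the regularity of explicitly supported functionals is exactly the hard problem (witness the failure of the Kostant cascade in the non-Frobenius case) that motivates this entire paper. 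Second, you correctly identify the sign-parity verification as ``the main obstacle'' but do not carry it out, and it is not routine bookkeeping: the kernel of $B_f$ on the symplectic seaweed is \emph{not} the flip-antisymmetric part of the $\gl(2n)$ kernel, because the kernel condition on a subalgebra is weaker (one brackets only against elements of the subalgebra). The twist analysis must therefore be done intrinsically in $\g\subseteq\mf{sp}(2n)$, propagating the paired entries $x_{i,j}=\epsilon_{i,j}x_{2n+1-j,2n+1-i}$ simultaneously, with separate treatment of ordinary arcs, tail arcs, and aftertail $2$-cycles. Until both steps are executed, what you have is a plausible program -- consistent with small cases -- rather than a proof.
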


\begin{remark}
An isolated point contained in the tail is considered to have one endpoint in the tail of a meander, as it only has one endpoint to begin with.
\end{remark}

We have the following immediate Corollary.

\begin{theorem}
\label{C cor}
The Lie algebra $C_n$ has index $n$.
\end{theorem}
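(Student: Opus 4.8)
The plan is to realize $C_n = \mf{sp}(2n)$ as the full seaweed $\p_n^C\frac{\;}{\;}$ --- that is, the seaweed obtained from the pair of \emph{empty} partial compositions of $n$ --- and then read off its index directly from the formula of Theorem \ref{comp index C}. First I would verify that the full symplectic algebra indeed corresponds to empty compositions: with $m=t=0$ we have $\sum_{i=1}^m a_i = \sum_{i=1}^t b_i = 0$, so the expanded type is $\frac{2n}{2n}$, the intersection of the two parabolics of $\gl(2n)$ each equal to all of $\gl(2n)$, whose symplectic restriction is exactly $C_n$.

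Next I would build the shortened meander $M_n^C$ for this seaweed according to Definition \ref{smaller meander}. Since there are no blocks in either the top or bottom partition, no arcs are drawn, so $M_n^C$ consists of the $n$ vertices $v_1,\dots,v_n$ as isolated points with no edges. I would then compute the tail and aftertail: because $\sum_{i=1}^m a_i = \sum_{i=1}^t b_i = 0$, both $T_a$ and $T_b$ equal the full vertex set $\{v_1,\dots,v_n\}$, so the tail $T_\g = (T_a\cup T_b)\setminus(T_a\cap T_b)$ is empty and the aftertail $T_\g^a = T_a\cap T_b = \{v_1,\dots,v_n\}$ is everything.

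With the combinatorics in hand, the count is immediate. There are no cycles, so $C = 0$. Each of the $n$ vertices is an isolated point, hence a path; by the Remark preceding this corollary, an isolated point has an endpoint in the tail only if it lies in the tail, and here the tail is empty, so each of these $n$ paths has zero endpoints in the tail. Thus every isolated point is counted by $\Tilde{P}$, giving $\Tilde{P} = n$. Applying Theorem \ref{comp index C} yields $\ind C_n = 2C + \Tilde{P} = 2\cdot 0 + n = n$.

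There is essentially no analytic obstacle here; the only point requiring care is the bookkeeping of endpoints for isolated points, since the definition of $\Tilde{P}$ counts paths with zero or two endpoints in the tail and one must correctly invoke the Remark to see that an aftertail isolated point contributes with zero tail-endpoints (rather than being excluded from the count). Once that convention is applied, the corollary follows from a single substitution into the index formula.
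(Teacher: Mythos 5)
Your proof is correct and is exactly the argument the paper leaves implicit when it calls this an ``immediate Corollary'' of Theorem \ref{comp index C}: view $C_n$ as the seaweed with empty partial compositions, observe that $M_n^C$ is $n$ isolated points with empty tail and full aftertail, and substitute $C=0$, $\Tilde{P}=n$ into the formula. Your careful handling of the isolated-point convention (zero tail-endpoints, hence counted in $\Tilde{P}$) is the right reading and matches the Remark following that theorem.
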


Just as we used a meander half the size of the full meander in $\gl(2n)$, when there is symmetry across the antidiagonal, it suffices to consider a functional with indices on or above the antidiagonal only. The functional $F'=F+\sum_{(i,j)\in\mathscr{I}_F}c_{i,j}e_{2n+1-j,2n+1-i}^*$, where $c_{i,j}$ is the negative coefficient of $e_{i,j}^*$ in $F$ for $i,j\in[1,n]$, and equal to the coefficient of $e_{i,j}^*$ in $F$ otherwise, has the same kernel of the Kirillov form.

\begin{theorem}
The Functional $F_n=\sum_{i=1}^n\sum_{j=1}^{n+1-i}e_{i,j}^*$ of Theorem \ref{My Theorem 1} is regular on $C_n$.
\end{theorem}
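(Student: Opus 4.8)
The plan is to adapt the proof of Theorem~\ref{functional on An}: rather than a single trace condition, the restriction to $\mf{sp}(2n)$ imposes a family of block symmetries, and I would show directly that $\dim\ker(B_{F_n})=n=\ind C_n$ (Theorem~\ref{C cor}), which makes $F_n$ regular.

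First I would write an arbitrary $X=[b_{i,j}]\in C_n$ in $n\times n$ block form, with blocks $X_{11},X_{12},X_{21},X_{22}$, and record the symplectic relations defining $C_n$: the block $X_{22}$ is determined by $X_{11}$ (an antidiagonal transpose, up to sign), while $X_{12}$ and $X_{21}$ are persymmetric and mutually independent. Since $\mathscr{I}_{F_n}$ is supported strictly in the top-left corner (the indices $(i,j)$ with $i,j\le n$ and $i+j\le n+1$), every coefficient index occurring in the system of Lemma~\ref{Syst of Eqs} is a top-left index. I would then evaluate $B_{F_n}(X,\cdot)$ on the basis of $C_n$, which consists of the antidiagonal-symmetric combinations $e_{p,q}\pm e_{2n+1-q,\,2n+1-p}$, and split the analysis according to which $n\times n$ quadrant contains $(p,q)$.

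The core of the argument is the following case split. When $p,q\le n$, the mirror index $(2n+1-q,2n+1-p)$ lies in the bottom-right corner, so the mirror term is annihilated by $F_n$ and the equation reduces to exactly the $\gl(n)$ equation of Lemma~\ref{Syst of Eqs}; letting $(p,q)$ range over all such pairs forces $X_{11}$ to lie in $\ker(B_{F_n})$ over $\gl(n)$, an $n$-dimensional space by Theorem~\ref{My Theorem 1} and Theorem~\ref{Index Cor}. When $(p,q)$ lies in the bottom-left quadrant the equation becomes $\sum_{s=1}^{n+1-q}b_{s,p}=0$ and involves only the top-right block $X_{12}$; a triangular induction on $q$, exactly as with Equation~(\ref{AAA}) in Theorem~\ref{functional on An}, forces $X_{12}=0$. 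Symmetrically, the top-right quadrant tests force the bottom-left block $X_{21}=0$. Finally, the bottom-right tests reduce through their mirror terms to equations already imposed by the top-left case, so they are redundant and place no new constraint on $X_{22}$ (which is in any case determined by $X_{11}$). Assembling these, a relations matrix of $\ker(B_{F_n})$ on $C_n$ is the $\gl(n)$ relations matrix embedded in $X_{11}$, with $X_{12}=X_{21}=0$ and $X_{22}$ determined, so $\dim\ker(B_{F_n})=n$ as required.

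The main obstacle is not the top-left case but the steps that zero the cross blocks: because each test element is a paired sum $e_{p,q}\pm e_{2n+1-q,2n+1-p}$, I must check that the two contributing terms do not interfere in a way that destroys the triangular elimination. I expect the cleanest remedy is to first symmetrize $F_n$ across the antidiagonal (as in the remark preceding the theorem), obtaining an antidiagonal-symmetric functional with the same kernel; Lemma~\ref{zero symmetry} then lets me establish $b_{i,j}=0$ on one side of the antidiagonal and transfer it to the other, which halves the elimination and avoids the cross-term bookkeeping entirely. Tracking the symplectic sign conventions throughout is the remaining routine but error-prone part.
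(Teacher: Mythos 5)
Your proposal is correct and is essentially the paper's own proof: your top-left case is the paper's equation (\ref{P}), which reproduces the $\gl(n)$ system of Lemma \ref{Syst of Eqs} and pins $X_{11}$ to a $\gl(n)$ relations matrix, and your elimination of the cross blocks via the mixed-quadrant test elements (with the antidiagonal symmetrization and the symmetry/persymmetry transfer handling the paired mirror terms) is exactly the paper's equation (\ref{PP}) argument followed by Lemma \ref{zero symmetry}, yielding the relations matrix $B\oplus\left(-B^{\widehat{t}}\right)$ and $\dim\ker(B_{F_n})=n=\ind C_n$. The only cosmetic difference is directional: you zero $X_{12}$ from bottom-left tests and get $X_{21}$ symmetrically, whereas the paper zeroes the bottom-left block from top-right tests and transfers across the diagonal.
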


\begin{proof}
Consider the standard basis for $C_n$. For $i,j\leq n$, we have 
\begin{equation} 
\label{P}
e_{i,j}-e_{2n+1-j,2n+1-i}\mapsto\left(\sum_{s=1}^{n+1-j}b_{s,i}-\sum_{s=1}^{n+1-i}b_{j,s}\right).
\end{equation} 
This system of equations which results from evaluating the $n^2$ expressions on the right hand side of (\ref{P}) at zero is equivalent to the system of equations for the image of the basis elements $e_{i,j}$ for $F_n$ defined on $\gl(n)$. For $(i,j)$ with $i+j\leq 2n+1$, $i\leq n$, and $j>n$ we have 
\begin{equation} 
\label{PP}
e_{i,j}+e_{2n+1-j,2n+1-i}\mapsto\left(-\sum_{s=1}^{n+1-i}b_{j,s}-\sum_{s=1}^{j-n}b_{2n+1-i,s}\right).
\end{equation} Through a linear algebra argument similar to those in section \ref{New Functional}, the solution to the system of equations which results from evaluating the right hand side of (\ref{PP}) at zero is $b_{j,i}=0$, for all $i,j$ defined. By Lemma \ref{zero symmetry}, this implies $b_{i,j}=0$. A relations matrix of $\ker(B_{F_n})$ is $B\oplus (-B^{\widehat{t}})$, where $B$ is a relations matrix of $\ker(B_{F_n})$ on $\gl(n)$.
\end{proof}

\begin{ex}
Consider the Lie algebra $C_4$. A relations matrix $B$ of $\ker(B_{F_4})$ is
\begin{center}
\scalebox{0.75}{
\begin{tabular}{c}
$B=\left(\begin{array}{cccc}
b_1+b_2+b_3+b_4&b_1+b_2+b_3&b_1+b_2&b_1\\
b_1+b_2+b_3&b_1+b_2+b_4 & b_1+b_3 & b_2\\
b_1+b_2&b_1+b_3&b_2+b_4&b_3\\
b_1&b_2&b_3&b_4
\end{array}\right)\bigoplus 
\left(\begin{array}{cccc}
-b_4&-b_3&-b_2&-b_1\\
-b_3&-b_2-b_4&-b_1-b_3&-b_1-b_2\\
-b_2&-b_1-b_3&-b_1-b_2-b_4&-b_1-b_2-b_3\\
-b_1&-b_1-b_2&-b_1-b_2-b_3&-b_1-b_2-b_3-b_4
\end{array}\right)$
\end{tabular}
}
\end{center}
\end{ex}

Now, we address proper seaweed subalgebras of $C_n$. We describe the adjustments needed from Definition \ref{Functional Construction def} to account for the aftertail and tail in Theorem \ref{func on C}.

\begin{theorem}
\label{func on C}
Let $\g$ be a seaweed of type-$C$ with associated meander $M_n^C$ and full meander $M(\g)$ defined on $2n$ vertices whose homotopy type is $H(c_1,\cdots,c_h)$. Let $f_c$ represent a functional on $\gl(c)$, for all $c$. Let $A=\mathscr{I}_{T_\g}\times\mathscr{I}_{T_\g}$ be the indices in the square block on the diagonal of $\g$ which contains the tail and $B=\mathscr{I}_{T_\g^a}\times\mathscr{I}_{T_\g^a}$ be the indices in the square block on the diagonal of $\g$ which contains the aftertail. For each $c_i$ such that $\textgoth{C}_{c_i}\cap A\neq\emptyset$ (i.e., each component whose core interacts with the tail of the meander $M_n^C$), define $\oF_{c_i}$ as in Definition \ref{Functional Construction def} by embedding a functional $f_{\lfloor c_i/2\rfloor}$, except only sum over $e_{i,j}^*$ with $i+j\leq2n+1$ (i.e., on or above the antidiagonal). For each $c_i$ such that $\textgoth{C}_{c_i}\cap A=\textgoth{C}_{c_i}\cap B=\emptyset$, define $\oF_{c_i}$ as in Definition \ref{Functional Construction def} except only sum over $e_{i,j}^*$ with $i+j\leq2n+1$. As in Definition \ref{Functional Construction def}, in both these embeddings we allow for the choice to rotate the indices or not by adding the appropriate functionals in the peak blocks for any peak block which occurs strictly above the antidiagonal of $\g$. The only difference is that, when crossing the antidiagonal, we require the choice of functionals over the main diagonal of the peak block \textup(which occur on or above the antidiagonal\textup). Finally, if $t=|T_\g^a|$, then the final functional 

$$F=\sum\oF_{c_i}+f_t^{n-t}$$

\noindent 
is such that

$$\dim\ker(B_F)=\sum\dim\ker(B_{f_{c_i}})+\sum\dim\ker(B_{f_{\lfloor c_i/2\rfloor}})+\dim\ker(B_{f_t}),$$

\noindent 
where $\dim\ker(B_F)$ is over $\g$, for each $i$ $\dim\ker(B_{f_{c_i}})$ is over $\gl(c_i)$ and $\dim\ker(B_{f_{\lfloor c_i/2\rfloor}})$ is over $\p_{c_i}^C(\{c_i\}\;|\;\emptyset)$, and $\dim\ker(B_{f_t})$ is over $C_t$.

As before, the constructed functional is regular if and only if we embed regular functionals in each component.
\end{theorem}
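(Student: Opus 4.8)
The plan is to run the induction-on-winding-down-moves argument of the proof of Theorem \ref{Functional Construction}, but applied to the full antidiagonal-symmetric meander $M(\g)$ on $2n$ vertices and carrying the symplectic symmetry through every move. The engine is the antidiagonal form of Lemma \ref{zero symmetry}: the construction produces an $F$ that agrees with its symplectic symmetrization $F'$ (the $F'$ of the paragraph preceding the theorem that $F_n$ is regular on $C_n$), so any relations matrix $B$ of $\ker(B_F)$ is determined by its entries on and above the antidiagonal, the lower-right portion being the $\widehat{t}$-mirror up to sign. This is exactly the phenomenon already visible in the base case, where $\ker(B_{F_n})$ on $C_n$ had relations matrix $B\oplus(-B^{\widehat{t}})$ with the free variables \emph{shared} between the two summands; tracking only the upper half is precisely what converts a naive $\gl(2n)$ count into the correct $C_n$ count, and it is the mechanism that will distinguish this proof from the $\gl$ case.

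First I would split the contributions along the trichotomy in the statement and explain how each is realized by the symmetry. A component whose core meets neither the tail block $A$ nor the aftertail block $B$ sits strictly off the antidiagonal and is interchanged with a mirror component by $\widehat{t}$; placing $f_{c_i}$ on and above the antidiagonal covers the upper representative, and symplectic symmetrization fills in the mirror with shared free variables, so the symmetric pair contributes a single copy of $\dim\ker(B_{f_{c_i}})$ exactly as the $\gl$ machinery of Theorem \ref{Functional Construction} and Theorem \ref{kernel} produces it. A component whose core crosses the antidiagonal through the tail is self-mirror; folding it across the antidiagonal turns the local picture into a seaweed of type $\p_{c_i}^C(\{c_i\}\,|\,\emptyset)$, and embedding $f_{\lfloor c_i/2\rfloor}$ summed only on and above the antidiagonal reproduces a relations matrix of $\ker$ over that symplectic half, contributing $\dim\ker(B_{f_{\lfloor c_i/2\rfloor}})$. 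Finally the central aftertail block is handled verbatim as the base case: $f_t^{\,n-t}$ embedded there yields the shape $B\oplus(-B^{\widehat{t}})$ over $C_t$ and contributes $\dim\ker(B_{f_t})$. Summing the three families gives the displayed formula, with the total kernel described as a direct sum of the pieces, the symplectic analogue of Theorem \ref{kernel}.

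To make the induction bite I would, as in the $\gl$ proof, reduce by winding-down moves to the canonical symmetric seaweed isolating one tail-crossing component together with its two mirror flanks and the central aftertail --- the Type-$C$ counterpart of the form $\frac{2n+m}{n\,|\,m\,|\,n}$ used before --- and run the explicit image computation there. A Block Elimination move again forces the outer block to equal a rotation of the next block and forces the peak blocks to vanish through the recursion of equation (\ref{peaks}), driven by the analogues of equations (\ref{1}) and (\ref{2}); the one genuinely new feature is the sign appearing when a move crosses the antidiagonal, which enters through the symplectic basis elements $e_{i,j}+e_{2n+1-j,2n+1-i}$ of equation (\ref{PP}) rather than the sign-free elements of equation (\ref{P}).

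The hard part, and the step I expect to be the main obstacle, is precisely this antidiagonal-crossing move. Here the upper and lower halves of the relations matrix cease to be independent: the symplectic pairing identifies them with a sign, so the peak-vanishing argument of Theorem \ref{Functional Construction} must be rerun with that identification in force, and one must check that the resulting fold \emph{halves} the degrees of freedom, producing $f_{\lfloor c_i/2\rfloor}$ over $\p_{c_i}^C$ rather than $f_{c_i}$ over $\gl(c_i)$. The stipulation in the statement that one must choose the main-diagonal (not antidiagonal) functionals in any peak block straddling the antidiagonal is exactly the hypothesis that keeps this folded computation consistent, and verifying that this choice makes the cancellations in the symplectic analogues of (\ref{1}), (\ref{2}), and (\ref{peaks}) go through is the crux of the argument. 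Once the canonical case is settled, regularity follows formally: comparing the resulting $\dim\ker(B_F)$ against $\ind\g$ computed from $M_n^C$ via Theorem \ref{comp index C} (and the base fact $\ind C_t=t$ of Theorem \ref{C cor}) shows the index bound is met with equality exactly when every embedded $f_{c_i}$, $f_{\lfloor c_i/2\rfloor}$, and $f_t$ is itself regular.
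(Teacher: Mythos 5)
Your proposal follows essentially the same route as the paper's proof: induction on the winding-down moves of Lemma \ref{winding down} as in Theorem \ref{Functional Construction}, with the aftertail treated as a self-contained direct-sum component, and a separate base case for tail-crossing components on the seaweed $\p_{c_i}^C(\{c_i\}\;|\;\emptyset)$, where the antidiagonal (symplectic) symmetry folds the relations matrix into the form $B\oplus(-B^{\widehat{t}})$ and halves the degrees of freedom to give $f_{\lfloor c_i/2\rfloor}$. Your identification of the antidiagonal-crossing fold as the crux, and the regularity conclusion via Theorem \ref{comp index C}, match the paper's argument, so this is the same proof in slightly more explicit form.
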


We first introduce the following nontrivial example which demonstrates Theorem \ref{func on C} and highlights the differences between the tail and aftertail.

\begin{ex}
\label{finalex C}
Consider $\g$ of type $\p_{18}^C\frac{5|10|6|10|5}{2|4|3|1|1|14|1|1|3|4|2}$. The meanders $M_{18}^C$ and $M(\g)$ are shown in Figure\ref{crazymeander2} \textup(left and right, respectively\textup), with the tail vertices and components colored blue and the aftertail vertices and component colored red. It follows from Theorem \ref{comp index C} that $\ind\g=7$.

\begin{figure}[H]
$$\begin{tikzpicture}[scale=0.28]
    \def\Node{\node [circle, fill, inner sep=1.5pt]}
    \Node (1) at (1,0){};
    \Node (2) at (2,0){};
    \Node (3) at (3,0){};
    \Node (4) at (4,0){};
    \Node (5) at (5,0){};
    \Node (6) at (6,0){};
    \Node (7) at (7,0){};
    \Node (8) at (8,0){};
    \Node (9) at (9,0){};
    \Node (10) at (10,0){};
    \Node (11) at (11,0){};
    \Node [blue] (12) at (12,0){};
    \Node [blue] (13) at (13,0){};
    \Node [blue] (14) at (14,0){};
    \Node [blue] (15) at (15,0){};
    \Node [red] (16) at (16,0){};
    \Node [red] (17) at (17,0){};
    \Node [red] (18) at (18,0){};
    \draw (1) to[bend left=60] (5);
    \draw (2) to[bend left=60] (4);
    \draw [line width=0.45mm, blue] (6) to[bend left=60] (15);
    \draw [line width=0.45mm, blue] (7) to[bend left=60] (14);
    \draw [line width=0.45mm, blue] (8) to[bend left=60] (13);
    \draw [line width=0.45mm, blue] (9) to[bend left=60] (12);
    \draw (10) to[bend left=60] (11);
    
    \draw (1) to[bend right=60] (2);
    \draw [line width=0.45mm, blue] (3) to[bend right=60] (6);
    \draw (4) to[bend right=60] (5);
    \draw [line width=0.45mm, blue] (7) to[bend right=60] (9);
    \node at (1,-4.1){};
\end{tikzpicture}
\hspace{2em}
\begin{tikzpicture}[scale=0.28]
    \def\Node{\node [circle, fill, inner sep=1.5pt]}
    \Node (1) at (1,0){};
    \Node (2) at (2,0){};
    \Node (3) at (3,0){};
    \Node (4) at (4,0){};
    \Node (5) at (5,0){};
    \Node (6) at (6,0){};
    \Node (7) at (7,0){};
    \Node (8) at (8,0){};
    \Node (9) at (9,0){};
    \Node (10) at (10,0){};
    \Node (11) at (11,0){};
    \Node [blue] (12) at (12,0){};
    \Node [blue] (13) at (13,0){};
    \Node [blue] (14) at (14,0){};
    \Node [blue] (15) at (15,0){};
    \Node [red] (16) at (16,0){};
    \Node [red] (17) at (17,0){};
    \Node [red] (18) at (18,0){};
    \Node [red] (1800) at (19,0){};
    \Node [red] (1700) at (20,0){};
    \Node [red] (1600) at (21,0){};
    \Node [blue] (1500) at (22,0){};
    \Node [blue] (1400) at (23,0){};
    \Node [blue] (1300) at (24,0){};
    \Node [blue] (1200) at (25,0){};
    \Node (1100) at (26,0){};
    \Node (1000) at (27,0){};
    \Node (900) at (28,0){};
    \Node (800) at (29,0){};
    \Node (700) at (30,0){};
    \Node (600) at (31,0){};
    \Node (500) at (32,0){};
    \Node (400) at (33,0){};
    \Node (300) at (34,0){};
    \Node (200) at (35,0){};
    \Node (100) at (36,0){};
    \draw (1) to[bend left=60] (5);
    \draw (2) to[bend left=60] (4);
    \draw [line width=0.45mm, blue] (6) to[bend left=60] (15);
    \draw [line width=0.45mm, blue] (7) to[bend left=60] (14);
    \draw [line width=0.45mm, blue] (8) to[bend left=60] (13);
    \draw [line width=0.45mm, blue] (9) to[bend left=60] (12);
    \draw (10) to[bend left=60] (11);
    
    \draw (1) to[bend right=60] (2);
    \draw [line width=0.45mm, blue] (3) to[bend right=60] (6);
    \draw (4) to[bend right=60] (5);
    \draw [line width=0.45mm, blue] (7) to[bend right=60] (9);
    
    \draw (100) to[bend right=60] (500);
    \draw (200) to[bend right=60] (400);
    
    \draw [line width=0.45mm, blue] (600) to[bend right=60] (1500);
    \draw [line width=0.45mm, blue] (700) to[bend right=60] (1400);
    \draw [line width=0.45mm, blue] (800) to[bend right=60] (1300);
    \draw [line width=0.45mm, blue] (900) to[bend right=60] (1200);
    \draw (1000) to[bend right=60] (1100);
    
    \draw (100) to[bend left=60] (200);
    \draw [line width=0.45mm, blue] (300) to[bend left=60] (600);
    \draw (400) to[bend left=60] (500);
    \draw [line width=0.45mm, blue] (700) to[bend left=60] (900);
    
    \draw [line width=0.45mm, blue] (12) to[bend right=60] (1200);
    \draw [line width=0.45mm, blue] (13) to[bend right=60] (1300);
    \draw [line width=0.45mm, blue] (14) to[bend right=60] (1400);
    \draw [line width=0.45mm, blue] (15) to[bend right=60] (1500);
    \draw [line width=0.45mm, red] (16) to[bend right=60] (1600) to[bend right=60] (16);
    \draw [line width=0.45mm, red] (17) to[bend right=60] (1700) to[bend right=60] (17);
    \draw [line width=0.45mm, red] (18) to[bend right=60] (1800) to[bend right=60] (18);
\end{tikzpicture}$$
\vspace{-2em}
\caption{Meanders $M_{18}^C$ and $M(\g)$ associated with $\g$ of type $\p_{18}^C\frac{5|10|6|10|5}{2|4|3|1|1|14|1|1|3|4|2}$}
\label{crazymeander2}
\end{figure}
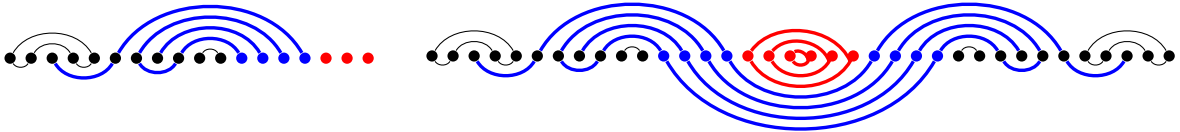 

The functional $\oF^a$ described by Theorem \ref{func on C} by embedding functionals $F_c$ from Theorem \ref{My Theorem 1} is illustrated in Figure \ref{bigex}, where the indices of $\g$ in the aftertail component are colored red, the indices in the tail components are colored blue, and a black dot is placed in each index of $\mathscr{I}_{\oF^a}$. We have added lines to emphasize the core and components of $\g$.

\begin{figure}[H]
$$\begin{tikzpicture}[scale=0.2]
    \def\Node{\node [circle, fill, inner sep=1.1pt]}
    \draw (0,0)--(36,0)--(36,36)--(0,36)--(0,0);
    \draw [line width=.65mm] (0,36)--(0,31)--(5,31)--(5,21)--(15,21)--(15,15)--(21,15)--(21,5)--(31,5)--(31,0)--(36,0)--(36,2)--(34,2)--(34,6)--(30,6)--(30,9)--(27,9)--(27,10)--(26,10)--(26,11)--(25,11)--(25,25)--(11,25)--(11,26)--(10,26)--(10,27)--(9,27)--(9,30)--(6,30)--(6,34)--(2,34)--(2,36)--(0,36);
    \draw [dotted] (0,0)--(36,36)--(0,36)--(36,0)--(18,0)--(18,36);
    \draw [dotted] (0,18)--(36,18);
    \draw (0,34)--(2,34)--(2,33)--(3,33)--(3,31)--(5,31)--(5,33)--(2,33)--(2,34)--(2,35);
    \draw (3,34)--(3,33);
    \draw (5,31)--(6,31)--(6,30)--(5,30);
    \draw (6,30)--(6,22)--(15,22)--(15,21)--(14,21)--(14,22);
    \draw (6,27)--(9,27)--(9,25)--(11,25)--(10,25)--(10,26)--(9,26);
    \draw (11,25)--(11,22)--(14,22)--(14,25);
    \draw (14,21)--(21,21)--(21,14)--(25,14)--(25,11)--(26,11)--(26,10);
    \draw (21,14)--(22,14)--(22,11)--(25,11)--(25,9)--(27,9)--(27,6)--(30,6);
    \draw (21,15)--(22,15)--(22,14)--(25,14);
    \draw (14,22)--(22,22)--(22,14);
    \draw (22,14)--(22,6)--(30,6)--(30,5);
    \draw (25,10)--(26,10)--(26,9);
    \draw (31,6)--(31,5)--(33,5)--(33,2)--(34,2)--(34,3)--(33,3);
    \draw (31,3)--(33,3);
    \draw (34,0)--(34,2);
    
    \draw [fill=blue, opacity=0.5] (2,34)--(6,34)--(6,30)--(9,30)--(9,25)--(25,25)--(25,9)--(30,9)--(30,6)--(34,6)--(34,2)--(33,2)--(33,5)--(21,5)--(21,21)--(5,21)--(5,33)--(2,33)--(2,34);
    \draw [fill=red, opacity=0.5] (15,21)--(21,21)--(21,15)--(15,15)--(15,21);
    
    \Node at (0.5,35.5){};
    \Node at (0.5,34.5){};
    \Node at (0.5,31.5){};
    \Node at (1.5,35.5){};
    \Node at (1.5,32.5){};
    \Node at (3.5,31.5){};
    \Node at (4.5,32.5){};
    \Node at (4.5,31.5){};
    \Node at (5.5,33.5){};
    \Node at (5.5,21.5){};
    \Node at (6.5,29.5){};
    \Node at (6.5,22.5){};
    \Node at (7.5,23.5){};
    \Node at (8.5,24.5){};
    \Node at (9.5,25.5){};
    \Node at (13.5,22.5){};
    \Node at (15.5,20.5){};
    \Node at (15.5,19.5){};
    \Node at (15.5,18.5){};
    \Node at (16.5,20.5){};
    \Node at (16.5,19.5){};
    \Node at (17.5,20.5){};
    \Node at (21.5,21.5){};
    \Node at (22.5,24.5){};
    \Node at (23.5,23.5){};
\end{tikzpicture}$$
\caption{Indices in $\mathscr{I}_{\oF}$ on $\p_{18}^C\frac{5|10|6|10|5}{2|4|3|1|1|14|1|1|3|4|2}$}
\label{bigex}
\end{figure}



Direct computation yields that $B\oplus(-B^{\widehat{t}})$ is a relations matrix of $\ker(B_{\oF})$, with 
\begin{center}
\scalebox{0.75}{
\begin{tabular}{c}
$B=
\left(\begin{array}{cc}
b_1+b_2&b_1\\
b_1&b_2
\end{array}\right)
\bigoplus (0)\bigoplus 
\left(\begin{array}{cc}
b_2&b_1\\
b_1&b_1+b_2
\end{array}\right)
\bigoplus (0)
\bigoplus 
\left(\begin{array}{ccc}
b_3&0&0\\
0&0&0\\
0&0&-b_3
\end{array}\right)
\bigoplus \left(\begin{array}{cc}
b_4&0\\
0&b_4
\end{array}\right)
\bigoplus
\left(\begin{array}{ccc}
-b_3&0&0\\
0&0&0\\
0&0&b_3
\end{array}\right)
\bigoplus (0)\bigoplus 
\left(\begin{array}{ccc}
b_5+b_6+b_7&b_5+b_6&b_5\\
b_5+b_6&b_5+b_7&b_6\\
b_5&b_6&b_7
\end{array}\right).$
\end{tabular}
}
\end{center}

\end{ex}

\begin{proof}[Proof of Theorem \ref{func on C}] The proof is similar to the proof of Theorem \ref{Functional Construction}. Unlike in Type-$A$, the Component Creation move in Type-$C$ yields a direct sum, so the result for this move follows the proof of Theorem \ref{Functional Construction}. However, some care is needed to address the tail and aftertail. By definition, the aftertail is a self-contained component (a set of nested cycles which is not wound-up) and, therefore, the proof follows from the proof of Theorem \ref{Functional Construction}. For the components which have cores that intersect the tail nontrivially, the induction is the same as in the proof of Theorem \ref{Functional Construction}, except that a separate base case is needed. 

Note that a component of size $c_i$ no longer contributes $c_i$ to the index of $\g$, but rather $\lfloor\frac{c_i}{2}\rfloor$. For the base case on tail components, consider the seaweed $\p_{c_i}^C(\{c_i\}\;|\;\emptyset)$ and the functional 

$$F=F_{\lfloor c_i/2\rfloor}+\sum_{i=1}^{\lceil c_i/2\rceil}e_{i,c_i+i}^*.$$

\noindent 
Let $B$ be a $\lfloor\frac{c_i}{2}\rfloor\times\lfloor\frac{c_i}{2}\rfloor$ relations matrix of $\ker(B_{F_{\lfloor c_i/2\rfloor}})$ on $\gl(\lfloor\frac{c_i}{2}\rfloor)$. By direct computation, if $c_i$ is even then $\ker(B_{F})$ has a relations matrix 

$$B\oplus(-B^{\widehat{t}})\oplus B\oplus(-B^{\widehat{t}}).$$ 

\noindent 
If $n$ is odd, then a relations matrix of $\ker(B_F)$ is 

$$B\oplus(0)\oplus(-B^{\widehat{t}})\oplus B\oplus(0)\oplus (-B^{\widehat{t}}).$$
\end{proof}

We introduce the following \textit{reduced homotopy type} for seaweeds of Type-$C$ to construct the analogue of Theorem \ref{Index and Homotopy}. We use the word ``reduced" as some of the $c_i$'s are omitted from the full homotopy type $H(c_1,\cdots,c_h)$.

\begin{definition}
\label{reducedmeander}
Let $\g$ be a seaweed subalgebra of $C_n$ with reduced meander $M_n^C$ and full meander $M(\g)$. Color the aftertail component \textup(if any\textup) of $M(\g)$ red and the tail components \textup(if any\textup) of $M(\g)$ blue. Eliminate any arcs and vertices to the right of $v_n$ in $M(\g)$ which are not colored red or blue. This produces a meander $M'$ on $I$ vertices with $I\in[n,2n]$. Apply Lemma \ref{winding down} to $M'$ to unwind it, and in each Component Elimination move $C(c)$, color $c$ the color of the component removed. Then $H_C(c_1,\cdots,c_h)$ is the {reduced homotopy type} of a Type-$C$ seaweed.
\end{definition}

\begin{ex}
Consider $\g$ of Example \ref{finalex C}. The meander $M(\g)$ is in Figure \ref{crazymeander2} \textup(right\textup). The meander $M'$ of Definition \ref{reducedmeander} is shown in Figure \ref{smaller}.

\begin{figure}[H]
$$\begin{tikzpicture}[scale=0.28]
    \def\Node{\node [circle, fill, inner sep=1.3pt]}
    \Node (1) at (1,0){};
    \Node (2) at (2,0){};
    \Node (3) at (3,0){};
    \Node (4) at (4,0){};
    \Node (5) at (5,0){};
    \Node (6) at (6,0){};
    \Node (7) at (7,0){};
    \Node (8) at (8,0){};
    \Node (9) at (9,0){};
    \Node (10) at (10,0){};
    \Node (11) at (11,0){};
    \Node [blue] (12) at (12,0){};
    \Node [blue] (13) at (13,0){};
    \Node [blue] (14) at (14,0){};
    \Node [blue] (15) at (15,0){};
    \Node [red] (16) at (16,0){};
    \Node [red] (17) at (17,0){};
    \Node [red] (18) at (18,0){};
    \Node [red] (1800) at (19,0){};
    \Node [red] (1700) at (20,0){};
    \Node [red] (1600) at (21,0){};
    \Node [blue] (1500) at (22,0){};
    \Node [blue] (1400) at (23,0){};
    \Node [blue] (1300) at (24,0){};
    \Node [blue] (1200) at (25,0){};
    \Node [blue] (900) at (26,0){};
    \Node [blue] (800) at (27,0){};
    \Node [blue] (700) at (28,0){};
    \Node [blue] (600) at (29,0){};
    \Node [blue] (300) at (30,0){};
    \draw (1) to[bend left=60] (5);
    \draw (2) to[bend left=60] (4);
    \draw [line width=0.45mm, blue] (6) to[bend left=60] (15);
    \draw [line width=0.45mm, blue] (7) to[bend left=60] (14);
    \draw [line width=0.45mm, blue] (8) to[bend left=60] (13);
    \draw [line width=0.45mm, blue] (9) to[bend left=60] (12);
    \draw (10) to[bend left=60] (11);
    
    \draw (1) to[bend right=60] (2);
    \draw [line width=0.45mm, blue] (3) to[bend right=60] (6);
    \draw (4) to[bend right=60] (5);
    \draw [line width=0.45mm, blue] (7) to[bend right=60] (9);
    
    \draw [line width=0.45mm, blue] (600) to[bend right=60] (1500);
    \draw [line width=0.45mm, blue] (700) to[bend right=60] (1400);
    \draw [line width=0.45mm, blue] (800) to[bend right=60] (1300);
    \draw [line width=0.45mm, blue] (900) to[bend right=60] (1200);
    
    \draw [line width=0.45mm, blue] (300) to[bend left=60] (600);
    \draw [line width=0.45mm, blue] (700) to[bend left=60] (900);
    
    \draw [line width=0.45mm, blue] (12) to[bend right=60] (1200);
    \draw [line width=0.45mm, blue] (13) to[bend right=60] (1300);
    \draw [line width=0.45mm, blue] (14) to[bend right=60] (1400);
    \draw [line width=0.45mm, blue] (15) to[bend right=60] (1500);
    \draw [line width=0.45mm, red] (16) to[bend right=60] (1600) to[bend right=60] (16);
    \draw [line width=0.45mm, red] (17) to[bend right=60] (1700) to[bend right=60] (17);
    \draw [line width=0.45mm, red] (18) to[bend right=60] (1800) to[bend right=60] (18);
\end{tikzpicture}$$
\caption{Reduced meander $M'$ of $\p_{18}^C\frac{5|10|6|10|5}{2|4|3|1|1|14|1|1|3|4|2}$}
\label{smaller}
\end{figure}
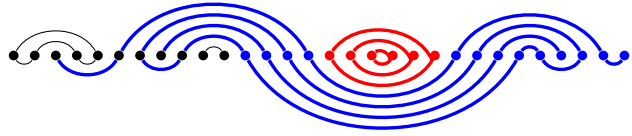

The reduced homotopy type of $\g$ is $H_C(2,1,{\textcolor{blue}{1}},{\textcolor{blue}{3}},{\textcolor{red}{6}})$, shown in Figure \ref{htpyC}.

\begin{figure}[H]
$$\begin{tikzpicture}[scale=2]
\def\Node{\node [circle, fill, inner sep=1.5pt]}
	\draw (0.25,0) node[draw,circle,fill=white,minimum size=20pt,inner sep=0pt] (2+) {};
    
	\draw (0.7,0) node[draw,circle,fill=black,minimum size=4pt,inner sep=0pt] (2+) {};
	
	\draw (1.05,0) node[blue,draw,circle,fill=blue,minimum size=4pt,inner sep=0pt] (2+) {};
    
	\draw (1.5,0) node[blue,draw,line width=0.45mm,,circle,fill=white,minimum size=30,inner sep=0pt] (2+) {};
	\draw (1.5,0) node[blue,draw,line width=0.45mm,,circle,fill=blue,minimum size=4pt,inner sep=0pt] (2+) {};
    
	\draw (2.5,0) node[red,draw,line width=0.45mm,circle,fill=white,minimum size=60,inner sep=0pt] (2+) {};
	\draw (2.5,0) node[red,draw,line width=0.45mm,,circle,fill=white,minimum size=40,inner sep=0pt] (2+) {};
	\draw (2.5,0) node[red,draw,line width=0.45mm,,circle,fill=white,minimum size=20pt,inner sep=0pt] (2+) {};
\end{tikzpicture}$$
\caption{Reduced homotopy type $H_C(2,1,{\textcolor{blue}{1}},{\textcolor{blue}{3}},{\textcolor{red}{6}})$}
\label{htpyC}
\end{figure}
\end{ex}

The following theorem is the Type-$C$ analogue of the theorem in Type-$A$ and $\gl(n)$ (cf. Theorem \ref{Index and Homotopy}). Note that in Type-$A$, there is no tail or aftertail.

\begin{theorem}\label{reduced homotopy type}
If $\g$ is a seaweed of type-$C$ with reduced homotopy type $H_C(c_1,\cdots,c_{h_1},\bm{\textcolor{blue}{c_{h_1+1}}},\cdots,\bm{\textcolor{blue}{c_{h_2}}},\bm{\textcolor{red}{c_{h_2+1}}}),$ then 

$$\ind\g=\sum_{i=1}^{h_1}c_i+\sum_{i=h_1+1}^{h_2}\left\lfloor\frac{\bm{\textcolor{blue}{c_i}}}{2}\right\rfloor+\frac{\bm{\textcolor{red}{c_{h_2+1}}}}{2}.$$
\end{theorem}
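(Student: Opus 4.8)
The plan is to manufacture a \emph{regular} functional on $\g$ via Theorem \ref{func on C} and then read the index straight off its dimension formula, evaluating each summand by a separate index computation. First I would fix regular functionals $f_{c_i}$ on $\gl(c_i)$ for the non-tail/non-aftertail components, $f_{\lfloor c_i/2\rfloor}$ for the tail components, and $f_t$ on $C_t$, and assemble the functional $F=\sum\oF_{c_i}+f_t^{\,n-t}$ exactly as prescribed. Since every building block is regular, the last assertion of Theorem \ref{func on C} forces $F$ to be regular on $\g$, whence $\ind\g=\dim\ker(B_F)$, and the dimension identity of that theorem gives
$$\ind\g=\sum\dim\ker(B_{f_{c_i}})+\sum\dim\ker(B_{f_{\lfloor c_i/2\rfloor}})+\dim\ker(B_{f_t}),$$
the three pieces ranging over the non-tail/non-aftertail components, the tail components, and the single aftertail, respectively.

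Next I would evaluate each family of kernel dimensions. For a non-tail/non-aftertail component of size $c_i$ the relevant block is a regular functional on $\gl(c_i)$, so by Theorem \ref{Index Cor} it contributes $\ind\gl(c_i)=c_i$. For the aftertail, $f_t$ is regular on $C_t$, so Theorem \ref{C cor} gives the contribution $\ind C_t=t$. For a tail component of size $c_i$ the kernel is computed over $\p_{c_i}^C(\{c_i\}\;|\;\emptyset)$; applying Theorem \ref{comp index C} to its meander $M_{c_i}^C$ (a single top block of size $c_i$, whose $\lfloor c_i/2\rfloor$ arcs are all paths with both endpoints in the tail, plus one lone tail endpoint when $c_i$ is odd) yields index $\lfloor c_i/2\rfloor$ — this is precisely the base case already computed inside the proof of Theorem \ref{func on C}.

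It then remains to match these three families with the colored pieces of the reduced homotopy type. By Definition \ref{reducedmeander} the reduced meander $M'$ retains the colored tail (blue) and aftertail (red) components of $M(\g)$ verbatim while deleting the uncolored arcs and vertices lying to the right of $v_n$; hence, upon unwinding, the uncolored sizes $c_1,\dots,c_{h_1}$ are the non-tail/non-aftertail component sizes, the blue sizes $c_{h_1+1},\dots,c_{h_2}$ are the tail-component sizes, and the single red size $c_{h_2+1}$ is the aftertail size. Because the aftertail is a self-symmetric nest of cycles spanning the $t$ positions of $T_\g^a$ together with their antidiagonal mirrors, its size is $c_{h_2+1}=2t$, so the aftertail contribution $t=\tfrac{c_{h_2+1}}{2}$ is genuinely half of an even number (this is why the red term carries no floor). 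Substituting the three evaluations into the displayed identity produces the claimed formula.

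The main obstacle I anticipate is exactly this bookkeeping. One must verify that the antidiagonal symmetry of a Type-$C$ seaweed collapses each symmetric \emph{pair} of uncolored core blocks into a single contribution $c_i$ rather than $2c_i$ — this is encoded in the fact that the constructed $F$ binds mirror-image core blocks together through a relations matrix of the form $B\oplus(-B^{\widehat{t}})$ — while the tail and aftertail components, which straddle or are centered on the antidiagonal, are precisely the ones whose contributions are halved. Carefully tracking which components of $M(\g)$ land in each of the three categories of Theorem \ref{func on C}, and confirming that this trichotomy coincides with the uncolored/blue/red trichotomy of Definition \ref{reducedmeander}, is where the genuine work lies; the individual index evaluations are immediate from Theorems \ref{Index Cor}, \ref{C cor}, and \ref{comp index C}.
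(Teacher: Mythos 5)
The paper itself states Theorem \ref{reduced homotopy type} without proof, offering it as the Type-$C$ analogue of Theorem \ref{Index and Homotopy}; so your proposal must be judged on whether it supplies a self-contained argument from the paper's machinery, and it does not, for two reasons. First, the pivotal step is circular. You deduce that $F$ is regular from ``the last assertion of Theorem \ref{func on C},'' but inside the paper that assertion is not an independent fact: the proof of Theorem \ref{func on C} establishes only the dimension identity (by relations-matrix computations), and its regularity clause is justified ``as before'' --- that is, by comparing $\dim\ker(B_F)$ against an already-known value of $\ind\g$, exactly as the regularity corollary to Theorem \ref{Functional Construction} is obtained by comparison with Theorem \ref{Index and Homotopy} in the $\gl(n)$ case. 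In Type $C$ the ``already-known value'' is precisely the formula you are trying to prove. Consequently your argument legitimately yields only the upper bound $\ind\g\le\sum_{i=1}^{h_1}c_i+\sum_{i=h_1+1}^{h_2}\lfloor c_i/2\rfloor+c_{h_2+1}/2$ (existence of a functional whose kernel has that dimension); the matching lower bound cannot come from the functional construction at all and must come from the combinatorial formula $\ind\g=2C+\tilde{P}$ of Theorem \ref{comp index C}.

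Second, the combinatorial identification that would close this gap is exactly the content you defer. One must show that $2C+\tilde{P}$, computed on $M_n^C$, equals the colored sum read off from unwinding the reduced meander $M'$ of Definition \ref{reducedmeander}: an uncolored Component Deletion $C(c)$ accounts for $\lfloor c/2\rfloor$ cycles and $(c\bmod 2)$ paths with no endpoint in the tail, contributing $2\lfloor c/2\rfloor+(c\bmod 2)=c$; a blue deletion $C(c)$ accounts for $\lfloor c/2\rfloor$ center-crossing cycles of $M'$, whose $M_n^C$-shadows are paths with both endpoints in the tail, plus (when $c$ is odd) one path with a single tail endpoint that contributes nothing, giving $\lfloor c/2\rfloor$; and the red deletion $C(2t)$ accounts for the $t$ aftertail isolated points, each contributing $1$, giving $c_{h_2+1}/2=t$ --- all of which must be shown to be preserved by the moves of Lemma \ref{winding down}. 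Your evaluations of the individual block indices (via Theorems \ref{Index Cor}, \ref{C cor}, and \ref{comp index C}) and your observation that $c_{h_2+1}=2t$ are correct, and you correctly identify the correspondence between the trichotomy of Theorem \ref{func on C} (organized by the homotopy type of the full meander $M(\g)$) and the colored sizes of $H_C$ (produced by unwinding the different meander $M'$) as the crux; but flagging that correspondence as ``where the genuine work lies'' is not the same as carrying it out, and without it --- and without repairing the circularity --- the proposal does not prove the theorem.
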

\end{subsection}

\begin{subsection}{Type-$B$ seaweeds}
\label{Type B}
Naming explicit regular functionals on seaweed subalgebras of the special orthogonal matrix algebras requires an adapted framework from that established in previous sections of this paper.  The necessary modification is to adjust the embedding of a functional in any tail component of odd size, as the previously established framework in section \ref{type C} would require the use of a functional $e_{i,n+1-i}^*$, which is precluded by the forced zeroes on the antidiagonal of $\mf{so}(n)$. 


Seaweed subalgebras of $B_n$ are constructed as in $\gl(2n+1)$, but they must also be subalgebras of $\mathfrak{so}(2n+1)$.  Because of the symmetry across the antidiagonal of $B_n$, we have a simplified notation for seaweeds of Type-$B$.

\begin{definition}
Given two partial compositions $(a_1,\cdots,a_m)$ and $(b_1,\cdots,b_t)$ of $n$ \textup(i.e., $\sum_{i=1}^ma_i,\sum_{i=1}^tb_i\leq n$\textup), let $\g$ be the seaweed of type $\frac{a_1|\cdots|a_m|2(n-\sum_{i=1}^ma_i)+1|a_m|\cdots|a_1}{b_1|\cdots|b_t|2(n-\sum_{i=1}^tb_i)+1|b_t|\cdots|b_1}$ which is a subalgebra of $B_n$.  This is the standard seaweed of type $\p_n^B\frac{a_1|\cdots|a_m}{b_1|\cdots|b_t}$.
\end{definition}

\begin{ex}
Consider the seaweed $\g=\p_3^B\frac{3}{2}$.  This is the set of all matrices in $\mathfrak{so}(7)$ whose possible nonzero entries occur in the marked entries of Figure \ref{type B construction}.
\begin{figure}[H]
 $$
 \begin{tikzpicture}[scale=0.5]
 \draw (0,0)--(0,7)--(7,7)--(7,0)--(0,0);
 \draw [line width=0.45mm](0,7)--(0,4)--(3,4)--(3,3)--(4,3)--(4,0)--(7,0)--(7,2)--(5,2)--(5,5)--(2,5)--(2,7)--(0,7);
 \node at (0.5,0.5){0};
 \node at (1.5,1.5){0};
 \node at (2.5,2.5){0};
 \node at (3.5,3.5){0};
 \node at (4.5,4.5){0};
 \node at (5.5,5.5){0};
 \node at (6.5,6.5){0};
 \node at (0.5,6.3){\LARGE *};
 \node at (0.5,5.3){\LARGE *};
 \node at (0.5,4.3){\LARGE *};
 \node at (1.5,6.3){\LARGE *};
 \node at (1.5,5.3){\LARGE *};
 \node at (1.5,4.3){\LARGE *};
 \node at (2.5,4.3){\LARGE *};
 \node at (3.5,4.3){\LARGE *};
 \node at (4.5,3.3){\LARGE *};
 \node at (4.5,2.3){\LARGE *};
 \node at (4.5,1.3){\LARGE *};
 \node at (4.5,0.3){\LARGE *};
 \node at (5.5,1.3){\LARGE *};
 \node at (5.5,0.3){\LARGE *};
 \node at (6.5,1.3){\LARGE *};
 \node at (6.5,0.3){\LARGE *};
 \node [label=left:{3}] at (0,5.5){};
 \node [label=left:{1}] at (3,3.5){};
 \node [label=left:{3}] at (4,1.5){};
 \node [label=above:{2}] at (1,7){};
 \node [label=above:{3}] at (3.5,5){};
 \node [label=above:{2}] at (6,2){};
 \end{tikzpicture}$$
 \caption{Seaweed of type $\p_3^B\frac{3}{2}$}
 \label{type B construction}
 \end{figure}
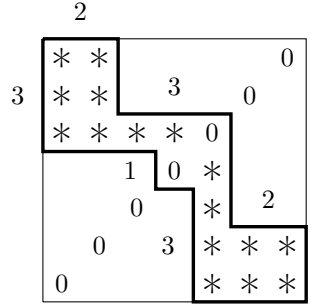
\end{ex}

To ease computations, we can leverage the symmetry across the antidiagonal of a seaweed subalgebra of $B_n$, and make use of a meander on $n$ vertices instead of the full $2n+1$ vertices that a seaweed subalgebra of $\gl(2n+1)$ would normally require.

\begin{definition}
Let $\g=\p_n^B\frac{a_1|\cdots|a_m}{b_1|\cdots|b_t}$.  The shortened meander associated with $\g$ \textup(denoted $M_n^B$ to differentiate it from the meander on $2n+1$ vertices and from a shortened meander of type-$C$\textup) is constructed exactly as in type-$C$.  We define the tail and aftertail analogously.  See example \ref{exC}.
\end{definition}

When describing how to construct a regular functional on $\p_n^B\frac{a_1|\cdots|a_m}{b_1|\cdots|b_t}$, we first embed functionals for each component of the homotopy type of $M_n^B$ which intersects the tail.  We then embed a functional to the aftertail component, if it is present. From there, the embedding of functionals $F_n$ will be as it is in $\gl(n)$ for any part of the meander which remains unaddressed.  What makes type-$B$ different from type-$C$ lies entirely in tail components of the meander which are of odd size. In type-$C$, when there is a tail component of odd size in the meander, this leads to the presence of an odd peak block which lies on the antidiagonal of the parent $2n\times2n$ matrix.  The regular functional embedding described for a type-$C$ meander requires the use of functionals $e_{i,j}^*$ which are on the main diagonal of the peak block.  Therefore, due to the odd size of the defined tail component, we know that exactly one of these functionals will have a coordinate location which is on the antidiagonal of the larger matrix, a position which is a forced zero in type-$B$.  We must address how to correct for this lost information when we describe the new embedding necessary to build a regular functional on a type-$B$ seaweed. First, we include a couple parallel definitions from section \ref{type C}.


To begin, we must address how restricting to algebras in $\mf{so}(2n+1)$ affects the index of a seaweed.

\begin{theorem}[Coll, Hyatt, and Magnant \textbf{[6]}; Panyushev and Yakimova \textbf{[17]}]
\label{B ind}
If $\g=\p_n^B\frac{a_1|\cdots|a_m}{b_1|\cdots|b_t}$ is a seaweed of Type-$B$, then $$\ind\g=2C+\tilde{P},$$ where $C$ is the number of cycles and $\tilde{P}$ is the number of paths and isolated points with zero or two endpoints in the tail of the meander $M_n^B$ associated with $\g$.
\end{theorem}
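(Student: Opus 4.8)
The plan is to compute $\ind\g=\dim\g-\max_{F}\operatorname{rank}B_F$ by producing a single functional $F$ that realizes the minimal kernel and whose kernel dimension is visibly $2C+\tilde{P}$, and then to confirm that this value is forced by the combinatorics of $M_n^B$. The guiding principle is the antidiagonal symmetry: since $\g=\g^{\widehat{t}}$, Lemma \ref{New functions by Symmetry} lets me restrict attention to functionals $F$ that are symmetric across the antidiagonal, and Lemma \ref{Syst of Eqs} then shows that the equations defining $\ker(B_F)$ are themselves antidiagonally symmetric. Consequently a relations matrix of $\ker(B_F)$ is determined by its entries on or above the antidiagonal, and counting its degrees of freedom folds the full $(2n+1)$-vertex meander $M(\g)$ onto the shortened meander $M_n^B$, exactly as in the Type-$C$ computation preceding Theorem \ref{func on C}.

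First I would run the Dergachev--Kirillov argument (Theorem \ref{Comb Formula}) on the full meander $M(\g)$ and track the reflection $\sigma\colon i\mapsto 2n+2-i$, under which the connected components of $M(\g)$ are permuted. A $\sigma$-pair of components lying off the antidiagonal (a cycle or a path together with its mirror image) contributes to $\ker(B_F)$ in a way that, after folding, is counted once; these are precisely the cycles and the paths whose endpoints both lie in, or both lie outside, the tail, giving the $2C$ and the ``two or zero endpoints'' paths of $\tilde{P}$. A $\sigma$-stable component (a path crossing the antidiagonal with exactly one tail-endpoint) is identified with its own image under folding and contributes nothing, which is why such paths are excluded from $\tilde{P}$. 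To keep the bookkeeping finite I would unwind $M_n^B$ by the winding-down moves of Lemma \ref{winding down}, passing to the reduced meander and reduced homotopy type in the spirit of Definition \ref{reducedmeander}, and verify that each move preserves $2C+\tilde{P}$ while a component deletion lowers the index by the deleted component's contribution. The base cases are the single-component seaweeds $\p_c^B(\{c\}\mid\emptyset)$, handled by the explicit relations-matrix calculation in the proof of Theorem \ref{func on C}.

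The genuinely new phenomenon in Type-$B$, and the step I expect to be the main obstacle, is the central vertex $v_{n+1}$ of the odd-dimensional matrix together with tail components of odd size. When an odd tail component is unfolded it passes through the antidiagonal, and the forced zero on the antidiagonal of $\so(2n+1)$ removes exactly one equation (equivalently, one admissible entry) from the symmetric relations matrix, relative to the symplectic case where the analogous $\p_c^C(\{c\}\mid\emptyset)$ yields the clean pairing $B\oplus(-B^{\widehat{t}})$. I would show that this single lost degree of freedom is precisely what promotes the associated structure to an \emph{isolated point} that must be counted in $\tilde{P}$, accounting for the appearance of ``isolated points'' in the Type-$B$ statement that is absent from the Type-$C$ statement of Theorem \ref{comp index C}. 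Making this rigorous amounts to a careful base-case analysis of $\p_c^B(\{c\}\mid\emptyset)$ for odd $c$, comparing the resulting relations matrix to the even case, isolating the parity correction produced by the central vertex, and checking it against the normalizations $\ind C_n=n$ (Theorem \ref{C cor}) and $\ind B_n=n$. Once the base cases carry the correct parity, the inductive invariance under winding-down propagates the formula $\ind\g=2C+\tilde{P}$ to all Type-$B$ seaweeds.
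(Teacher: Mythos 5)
The first thing to say is that the paper contains no proof of this statement for you to be compared against: Theorem \ref{B ind} is imported from the literature (Coll--Hyatt--Magnant and Panyushev--Yakimova) and is used in this paper purely as an \emph{input} -- it is what yields Theorem \ref{B cor}, and it is the yardstick against which the authors' constructed functionals are certified to be regular. So your attempt has to be judged as a standalone proof, and as such it has genuine gaps.

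The central gap is a missing lower bound, with an attendant circularity. The index is a minimum over $\g^*$, so producing one functional $F$ with $\dim\ker(B_F)=2C+\tilde{P}$ (which the relations-matrix technology of Sections \ref{Function Theory}--\ref{Type B} can in principle do) only gives $\ind\g\leq 2C+\tilde{P}$; the statement that no functional does better is precisely the content of the theorem, and your phrase ``confirm that this value is forced by the combinatorics'' is not an argument. Note that in this paper the logic runs in the opposite direction: kernel computations plus the \emph{known} formula certify regularity. Your proposed way around this -- induction on winding-down moves -- hits the same problem: Lemma \ref{winding down} asserts index-invariance only for seaweeds in $\gl(n)$, where it rests on Panyushev's inductive formula in that setting. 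The analogous invariance for subalgebras of $\so(2n+1)$ is essentially the theorem you are trying to prove (it is what Panyushev and Yakimova actually establish, by algebraic induction inside the orthogonal algebra), so invoking it as a ``verification step'' assumes the conclusion. Finally, the folding principle you lean on is unjustified as stated: for $x$ in the subalgebra $\g\subset\so(2n+1)$, membership in $\ker(B_F)$ requires $F([x,y])=0$ only for $y\in\g$, which is strictly fewer equations than in the ambient $\gl(2n+1)$-seaweed, so the kernel on $\g$ can be strictly larger than the antidiagonally symmetric part of the $\gl$-kernel. That is why the paper's own Type-$B$ and Type-$C$ computations proceed functional-by-functional through explicit relations matrices rather than by any general ``fold the meander, halve the kernel'' argument. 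Your heuristic $\ind\g=\bigl(\ind\g_{\gl}-\#\{\sigma\text{-stable paths}\bigr)/2$ is attractive and checks out on $B_n$ itself, but turning it into a proof requires exactly the fixed-point-subalgebra index theory that the proposal does not supply.
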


\begin{remark}
As in type-$C$, an isolated point contained in the tail is considered to have one endpoint in the tail of a meander, as it only has one endpoint to begin with.
\end{remark}

We have the following immediate Corollary.

\begin{theorem}
\label{B cor}
The Lie algebra $B_n$ has index $n$.
\end{theorem}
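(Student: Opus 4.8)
The plan is to recognize $B_n=\so(2n+1)$ as the \emph{full} Type-$B$ seaweed and then read the statement directly off Theorem \ref{B ind}. First I would observe that taking both partial compositions to be trivial (so that $m=t=0$ and $\sum_i a_i=\sum_i b_i=0$) yields the seaweed $\p_n^B\frac{\emptyset}{\emptyset}$, whose defining type is $\frac{2n+1}{2n+1}$. The corresponding seaweed is all of $\gl(2n+1)$ intersected with $\so(2n+1)$, i.e. $B_n$ itself, so it suffices to apply the index formula to this distinguished case.

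Next I would describe the shortened meander $M_n^B$ attached to this seaweed. Since both partial compositions are empty, there are no top or bottom blocks in which to draw arcs, so $M_n^B$ consists of $n$ isolated vertices $v_1,\dots,v_n$ and has no edges; in particular it contains no cycles, so $C=0$. Reading off the tail data from the definition (as in Type-$C$), one has $T_a=T_b=\{v_1,\dots,v_n\}$, whence the tail $T_\g=(T_a\cup T_b)\setminus(T_a\cap T_b)$ is empty while the aftertail $T_\g^a=T_a\cap T_b=\{v_1,\dots,v_n\}$ contains every vertex.

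Finally I would invoke Theorem \ref{B ind}. With no edges there are no cycles, so $C=0$. Each of the $n$ vertices is an isolated point, and since the tail is empty each such point has zero endpoints in the tail; by the Remark following Theorem \ref{B ind}, every isolated point with zero endpoints in the tail is counted in $\tilde P$, so $\tilde P=n$. Hence $\ind B_n=2C+\tilde P=n$, as claimed. The only point requiring care — and it is not really an obstacle, the result being labeled an immediate corollary — is getting the tail bookkeeping right (the symmetric difference defining $T_\g$ versus the intersection defining $T_\g^a$) and confirming that isolated points lying \emph{off} the tail still contribute to $\tilde P$ because they have zero endpoints in the tail. This is precisely the computation already underlying Theorem \ref{C cor} for $C_n$, so the argument is a direct Type-$B$ transcription of that one.
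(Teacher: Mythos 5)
Your proposal is correct and is exactly the argument the paper intends: the result is stated as an immediate corollary of Theorem \ref{B ind}, obtained by specializing to the empty partial compositions so that the seaweed is $B_n$ itself, whose shortened meander consists of $n$ isolated vertices lying entirely in the aftertail (so the tail is empty, $C=0$, and all $n$ isolated points have zero endpoints in the tail, giving $\tilde{P}=n$). Your bookkeeping of $T_\g$ versus $T_\g^a$ and the handling of the Remark are precisely the details the paper suppresses.
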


Just as we used a meander half the size of the full meander in $\gl(2n+1)$, when there is symmetry across the antidiagonal, it suffices to consider a functional with indices above the antidiagonal only. The functional $F'=F+\sum_{(i,j)\in\mathscr{I}_F}c_{i,j}e_{2n+1-j,2n+1-i}^*$, where $c_{i,j}$ is the appropriate coefficient of $e_{i,j}^*$ in $F$ to model the restrictions on $\mathfrak{so}(2n+1)$ has the same kernel of the Kirillov form.

\begin{theorem}
The Functional $F_n=\sum_{i=1}^n\sum_{j=1}^{n+1-i}e_{i,j}^*$ of Theorem \ref{My Theorem 1} is regular on $B_n$.
\end{theorem}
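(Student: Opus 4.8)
The plan is to mirror the Type-$C$ computation just completed, the one genuinely new feature being the central row and column forced by the odd size $2n+1$. I realize $B_n=\mathfrak{so}(2n+1)$ with respect to the antidiagonal form, so that a matrix $X$ lies in $\g$ exactly when $X_{i,j}=-X_{2n+2-j,\,2n+2-i}$; in particular the central diagonal entry $X_{n+1,n+1}$ vanishes, the bottom-right $n\times n$ block is the negative antidiagonal transpose of the top-left $n\times n$ block, and the standard basis of $\g$ consists of the elements $e_{i,j}-e_{2n+2-j,\,2n+2-i}$ indexed by positions $(i,j)$ strictly above the antidiagonal. Writing a prospective kernel element $B=[b_{i,j}]$ in block form with row and column sizes $(n,1,n)$, the goal is to show that the equations $B_{F_n}([B,\cdot])=0$ force $\ker(B_{F_n})$ to have relations matrix
$$
B_0\oplus(0)\oplus\left(-B_0^{\widehat{t}}\right),
$$
where $B_0$ is the relations matrix of $\ker(B_{F_n})$ on $\gl(n)$ supplied by Theorem \ref{My Theorem 1}. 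Since that $\gl(n)$ kernel has dimension $n$, while the central $(0)$ and the bottom-right block introduce no new free variables, this yields $\dim\ker(B_{F_n})=n=\ind B_n$ by Theorem \ref{B cor}, so $F_n$ is regular.

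The computations split according to where the basis element $e_{i,j}-e_{2n+2-j,\,2n+2-i}$ sits, each image being read off from (\ref{syst1}) of Lemma \ref{Syst of Eqs}. For $i,j\le n$ the reflected term $e_{2n+2-j,\,2n+2-i}$ lies entirely in the region where $F_n$ is zero and so contributes nothing; the equation collapses to $\sum_{s=1}^{n+1-j}b_{s,i}=\sum_{s=1}^{n+1-i}b_{j,s}$, which is verbatim the $\gl(n)$ system of (\ref{P}). Hence the top-left block of $B$ is constrained precisely to $\ker(B_{F_n})$ on $\gl(n)$, contributing the factor $B_0$ together with its $n$ free variables. Every remaining basis element carries an index in $\{n+1,\dots,2n+1\}$, and, exactly as in the Type-$C$ case, the associated equations involve only entries outside the top-left block, all of which I must force to zero.

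For the central column ($j=n+1$, $i\le n$) and central row ($i=n+1$, $j\le n$), direct evaluation gives equations of the form $\sum_{s=1}^{n+1-i}b_{n+1,s}=0$ and $\sum_{s=1}^{n+1-j}b_{s,n+1}=0$; running the descending induction $i,j=n,n-1,\dots,1$ forces $b_{n+1,s}=b_{s,n+1}=0$ for all $1\le s\le n$, and then $b_{n+1,n+1}=0$, so the entire central cross vanishes. For the above-antidiagonal positions with $i\le n,\ j\ge n+2$ the resulting equations involve only the bottom-left $n\times n$ block, and for those with $i\ge n+2,\ j\le n$ only the top-right $n\times n$ block; in each case the partial-sum relations, combined with the intrinsic antidiagonal skew-symmetry of that block, force it to be zero by the same linear-algebra induction used for (\ref{PP}). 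Assembling these facts — the top-left block realizing the $\gl(n)$ kernel, both off-diagonal blocks and the central cross vanishing, and the bottom-right block filled in as $-B_0^{\widehat{t}}$ by membership in $\mathfrak{so}(2n+1)$ — gives the claimed relations matrix and the dimension count.

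The main obstacle is precisely the bookkeeping around the central index $n+1$: this is where the argument departs from the even Type-$C$ template, and one must verify both that the central cross is independently forced to zero (note that it is \emph{not} recovered from the other blocks via the antidiagonal symmetry of Lemma \ref{zero symmetry}, unlike the cross terms in the $2n$-dimensional symplectic case) and that the shift induced by the odd central row does not disrupt the two off-diagonal inductions. Once these index computations are checked, the dimension bound closes exactly and regularity follows.
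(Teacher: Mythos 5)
Your proposal is correct and takes essentially the same route as the paper's own proof: the $i,j\le n$ equations collapse to the $\gl(n)$ system so the top-left block is the $\gl(n)$ relations matrix, descending inductions on the partial-sum equations kill the central cross, the remaining above-antidiagonal basis elements kill the off-diagonal $n\times n$ blocks, and the resulting relations matrix $B_0\oplus(0)\oplus\left(-B_0^{\widehat{t}}\right)$ gives $\dim\ker(B_{F_n})=n=\ind B_n$, hence regularity. If anything you are more careful than the paper, which writes only the central-row equations (its (\ref{midrow})) and leaves the upper central column implicit, whereas you record both families of central equations and correctly note that the antidiagonal symmetry alone does not transfer one half of the cross to the other.
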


\begin{proof}
The systems of equations generated by requiring $B_{F_n}(B,e_{i,j}-e_{2n+2-j,2n+2-i})=0$ on $\mf{so}(2n+1)$ and $B_{F_n}(B,e_{i,j})=0$ on $\gl(n)$ for $i,j\leq n$ are equivalent. To prove $F_n$ is regular on $B_n$, it suffices to show that $b_{i,j}=0$ for all $i\in[1,n]$, $j>n$. Note that
\begin{equation} 
\label{midrow}
e_{i,n+1}-e_{n+1,2n+1-i}\mapsto\sum_{s=1}^{n+1-i}b_{n+1,s}.
\end{equation}
By setting the expressions on the right hand side of (\ref{midrow}) equal to zero, we get a system of equations $n$ whose solution is $b_{n+1,i}=0$ for $s\in[1,n]$ (this is seen by induction, the base case is $i=n$ and the induction goes down to $i=1$). We get $b_{i,j}=0$ for all $(i,j)\in[1,n]\times[n+2,2n+1]$ through a linear algebra argument similar to that in the proof of Theorem \ref{My Theorem 2} on the set of equations
$$B_{F_n}(B,e_{j,i}-e_{2n+2-i,2n+2-j})=\sum_{s=1}^{n+1-i}b_{s,j}+\sum_{s=1}^{j-(n+1)}b_{s,2n+2-i}=0.$$
In conclusion, a relations matrix of $\ker(B_{F_n})$ on $B_n$ will be $$B\oplus(0)\oplus \left(-B^{\widehat{t}}\right),$$ where $B$ is an $n\times n$ relations matrix of $\ker(B_{F_n})$ on $\gl(n)$.
\end{proof}

\begin{ex}
Consider the Lie algebra $B_4$. A relations matrix $B$ of $\ker(B_{F_4})$ is
\begin{center}
\scalebox{0.75}{
\begin{tabular}{c}
$B=\left(\begin{array}{cccc}
b_1+b_2+b_3+b_4&b_1+b_2+b_3&b_1+b_2&b_1\\
b_1+b_2+b_3&b_1+b_2+b_4 & b_1+b_3 & b_2\\
b_1+b_2&b_1+b_3&b_2+b_4&b_3\\
b_1&b_2&b_3&b_4
\end{array}\right)
\bigoplus \text{\Large{\textup(\textup0\textup)}} \bigoplus
\left(\begin{array}{cccc}
-b_4&-b_3&-b_2&-b_1\\
-b_3&-b_2-b_4&-b_1-b_3&-b_1-b_2\\
-b_2&-b_1-b_3&-b_1-b_2-b_4&-b_1-b_2-b_3\\
-b_1&-b_1-b_2&-b_1-b_2-b_3&-b_1-b_2-b_3-b_4
\end{array}\right)$
\end{tabular}
}
\end{center}
\end{ex}

The reduced homotopy type $H_B(c_1,\cdots,c_{h_1},\bm{\textcolor{blue}{c_{h_1+1}}},\cdots,\bm{\textcolor{blue}{c_{h_2}}},\bm{\textcolor{red}{c_{h_2+1}}})$ on $\g$ is defined the same as in type-$C$. We have the immediate analogue of Theorem \ref{reduced homotopy type}.

\begin{theorem}
\label{index and homotopy C}
If $\g$ is a seaweed of type-$B$ with reduced homotopy type $$H_B(c_1,\cdots,c_{h_1},\bm{\textcolor{blue}{c_{h_1+1}}},\cdots,\bm{\textcolor{blue}{c_{h_2}}},\bm{\textcolor{red}{c_{h_2+1}}}),$$ then $$\ind\g=\sum_{i=1}^{h_1}c_i+\sum_{i=1}^{h_2}\left\lfloor\frac{c_i}{2}\right\rfloor+\frac{c_{h_2+1}-1}{2}.$$
\end{theorem}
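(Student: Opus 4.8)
The cleanest route is to reduce the statement to its already-established Type-$C$ counterpart, Theorem \ref{reduced homotopy type}, isolating the single structural difference between $\mathfrak{so}(2n+1)$ and $\mathfrak{sp}(2n)$. Given $\g=\p_n^B\frac{a_1|\cdots|a_m}{b_1|\cdots|b_t}$, I would introduce the Type-$C$ seaweed $\g_C=\p_n^C\frac{a_1|\cdots|a_m}{b_1|\cdots|b_t}$ built from the \emph{same} partial compositions. By construction the shortened meanders coincide, $M_n^B=M_n^C$, as graphs — including their tails and aftertails — so the quantities $C$ and $\tilde{P}$ counted in Theorem \ref{B ind} and in Theorem \ref{comp index C} are literally the same integers (isolated points being counted as length-zero paths in both). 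Hence $\ind\g=2C+\tilde{P}=\ind\g_C$, and it remains only to compare the two reduced homotopy types and invoke the Type-$C$ formula.

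Next I would compare the full meanders $M(\g)$, on $2n+1$ vertices, and $M(\g_C)$, on $2n$ vertices. They are identical except that the odd central block of $\mathfrak{so}(2n+1)$ contributes one extra vertex $v_{n+1}$, the unpaired center of that block, which is an isolated point absorbed into the aftertail component. Consequently the winding-down of Definition \ref{reducedmeander} produces the same uncolored (normal) components $c_1,\dots,c_{h_1}$ and the same \textcolor{blue}{blue} tail components for both algebras, while the \textcolor{red}{red} aftertail size increases by exactly one: $c_{h_2+1}^{B}=c_{h_2+1}^{C}+1$ (with the convention that a missing aftertail is read as size $0$ in Type-$C$ and as the lone size-$1$ central point in Type-$B$). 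Applying Theorem \ref{reduced homotopy type} to $\g_C$ and substituting $c_{h_2+1}^{C}=c_{h_2+1}^{B}-1$ gives $\ind\g=\ind\g_C=\sum_{i=1}^{h_1}c_i+\sum\lfloor c_i/2\rfloor+c_{h_2+1}^{C}/2=\sum_{i=1}^{h_1}c_i+\sum\lfloor c_i/2\rfloor+\tfrac{c_{h_2+1}^{B}-1}{2}$, which is the claimed formula; the even value $c_{h_2+1}^{C}/2$ turning into the integer $(c_{h_2+1}^{B}-1)/2$ is precisely the arithmetic signature of the forced-zero antidiagonal center of $\mathfrak{so}(2n+1)$.

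The main obstacle is the component-matching in the second step: I must verify rigorously that inserting the single central vertex into the odd middle block perturbs \emph{only} the aftertail component, enlarging it by one and leaving every normal and tail component — together with its entire winding-down history under Lemma \ref{winding down} — unchanged. This reduces to checking that the central point is always either the sole member of, or a nested addition to, the red component, and never participates in a winding-down move that alters another component's size. The boundary case $\sum a_i=\sum b_i=n$ (no tail or aftertail in $M_n^B$) should be handled explicitly: there $\g_C$ has empty middle block while $\g$ retains the isolated center $v_{n+1}$ as a size-$1$ red component contributing $(1-1)/2=0$, matching the direct evaluation $\ind B_n=n$ of Theorem \ref{B cor} when all of $v_1,\dots,v_n$ lie in the aftertail. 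As a sanity check throughout, each class of component can be cross-verified against $2C+\tilde{P}$ directly: a normal component of size $c_i$ resolves in $M_n^B$ into $\lfloor c_i/2\rfloor$ cycles plus, when $c_i$ is odd, one isolated point, all with zero tail-endpoints, contributing $2\lfloor c_i/2\rfloor+[c_i\text{ odd}]=c_i$.
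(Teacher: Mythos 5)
The paper offers no proof of this statement at all: it is introduced with ``We have the immediate analogue of Theorem \ref{reduced homotopy type}'' and, like that Type-$C$ theorem, is left unproved. So there is no argument in the text to retrace; judged on its own, your reduction is correct, and it is a clean way of making the claimed immediacy precise. Your first pillar is sound: $M_n^B$ and $M_n^C$ are constructed identically from the same partial compositions, and Theorems \ref{B ind} and \ref{comp index C} both evaluate the index as $2C+\tilde{P}$ on that common graph. Note that your reading of Type-$C$ ``paths'' as including isolated points is not a convention you are free to choose but is forced: otherwise Theorem \ref{comp index C} would give $\ind C_n=0$, contradicting Theorem \ref{C cor}. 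Your second pillar also checks out: writing $\g_C$ for the Type-$C$ seaweed on the same partial compositions, one has $M(\g)=M(\g_C)\sqcup\{v_{n+1}\}$, since the central top and bottom blocks of the Type-$B$ seaweed have odd length (so $v_{n+1}$ acquires no arc), while every other arc matches under the shift $v_j\mapsto v_{j-1}$ for $j\geq n+2$.

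For the component-matching you rightly flag as the crux: every arc of $M(\g)$ crossing the center has its left endpoint in $T_a$ or $T_b$, hence lies in a colored component, so the uncolored components eliminated in forming $M'$ correspond in the two meanders, the normal and blue Component Deletions coincide, and the isolated center, nested inside the block of $|T_\g^a|$ red $2$-cycles, can only disappear in the same deletion as those cycles, turning $C(2|T_\g^a|)$ into $C(2|T_\g^a|+1)$. That is exactly $c^B_{h_2+1}=c^C_{h_2+1}+1$, with your size-$1$ convention covering an empty aftertail; this is as rigorous as the paper's own Definition \ref{reducedmeander} permits. Two incidental payoffs of your route, compared with the direct ``analogue'' proof the paper presumably had in mind (redoing the Type-$C$ bookkeeping on $M_n^B$): it isolates the entire difference between types $B$ and $C$ in a single vertex, and it exposes the lower limit $i=1$ in the second sum of the printed statement as a typo — your derivation yields $\sum_{i=h_1+1}^{h_2}\lfloor c_i/2\rfloor$, as in the Type-$C$ statement, and starting that sum at $i=1$ would double count the normal components.
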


\begin{theorem}
\label{typebfunctional}
Let $\g$ be a type-$B$ seaweed with reduced homotopy type $H_B(c_1,\cdots,c_{h_1},\bm{\textcolor{blue}{c_{h_1+1}}},\cdots,\bm{\textcolor{blue}{c_{h_2}}},\bm{\textcolor{red}{c_{h_2+1}}}),$ and let $F$ be the functional constructed as in Theorem \ref{func on C} with the following modifications. Without loss of generality, since all the peak blocks which occurs on the antidiagonal of $\g$ must occur on the same side of the main diagonal (i.e., either they are all above the main diagonal or they are all below the main diagonal), let us assume that all peak blocks for the tail occur above the main diagonal of $\g$ (if they all occur below the main diagonal, for every coordinate $(i,j)$ throughout this definition use the appropriate transposition $(j,i)$ instead).  Let $A:=\{i_1,\cdots,i_k\}\subset\{h_1+1,\cdots,h_2\}$ such that $A$ is the maximal set satisfying that $\bm{\textcolor{blue}{c_{i}}}$ is odd for all $i\in A$ and $i_1<\cdots<i_k$. Define a set $B=\{j_1,\cdots,j_k\}$ such that $i_s+j_s=2n+2$ for all $s\in[1,k]$ (i.e., $(i_s,j_s)$ is on the antidiagonal of $\g$ for all $s$).  For all $s\in[1,k]$, define a functional $f_s=e_{i_s,n+1}^*+\sum_{t=s+1}^ke_{i_s,j_s}^*$.  In $F$, for replace every functional $e_{i_s,j_s}^*$ with the functional $f_s$.  Then the index calculated using the constructed functional $F$ is equal to the appropriate summation of the dimensions of the smaller kernel spaces corresponding to the chosen embedded functionals in the core of $\g$, as in Theorem \ref{func on C}.
\end{theorem}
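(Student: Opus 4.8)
The plan is to run the same induction on the winding-down moves of Lemma \ref{winding down} used in the proofs of Theorem \ref{Functional Construction} and Theorem \ref{func on C}, isolating the single new phenomenon of Type-$B$: odd-size tail components, whose peak blocks meet the antidiagonal of the ambient $(2n+1)\times(2n+1)$ matrix at a position that is a forced zero of $\mathfrak{so}(2n+1)$. Every other ingredient transfers verbatim. Component Creation moves again produce direct sums; cycles, even-size tail components, the aftertail, and all components whose cores avoid the odd tail peaks are handled exactly as in Theorem \ref{func on C}, since the only distinction between $\mathfrak{sp}(2n)$ and $\mathfrak{so}(2n+1)$ that bears on those pieces is the extra central index $n+1$ and the forced zeros on the antidiagonal, neither of which those blocks touch. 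Thus it suffices to verify that replacing each forbidden position $e_{i_s,j_s}^*$ by $f_s$ leaves the kernel dimension of the corresponding odd tail component unchanged, equal to $\dim\ker(B_{f_{\lfloor c_{i_s}/2\rfloor}})$, in agreement with Theorem \ref{index and homotopy C}.

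First I would reduce, as in Theorem \ref{func on C}, to a minimal base configuration containing exactly the odd tail components $c_{i_1},\dots,c_{i_k}$, and set up the defining equations for $\ker(B_F)$ from the $\mathfrak{so}(2n+1)$ basis elements $e_{p,q}-e_{2n+2-q,\,2n+2-p}$ via Lemma \ref{Syst of Eqs} and Lemma \ref{zero symmetry}. The mechanism driving the argument is the central-row identity already exploited in the proof that $F_n$ is regular on $B_n$, namely (\ref{midrow}): a dot placed at $(i_s,n+1)$ feeds precisely those equations that pin the central-column entries to zero by the same downward induction $i=n,n-1,\dots,1$. Placing $e_{i_s,n+1}^*$ rather than the illegal $e_{i_s,j_s}^*$ therefore recovers exactly the constraint that the missing antidiagonal functional would have imposed, but routed through the central column instead of across the antidiagonal.

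The role of the correction terms $\sum_{t=s+1}^{k}e_{i_s,j_t}^*$ is to decouple the odd components from one another. Since all the substituted dots share the column $n+1$, a naive substitution would entangle the equations belonging to distinct odd components; the staircase of corrections is arranged so that the resulting linear system telescopes. Concretely, I would induct on $s$ from the innermost component $s=k$ (where $f_k=e_{i_k,n+1}^*$ carries no correction) outward to $s=1$, at each stage using the already-established vanishing of the entries tied to components $t>s$ to cancel the contributions of the terms $e_{i_s,j_t}^*$, leaving one clean equation that forces the central entry of the $s$-th peak block to zero. The net effect is that each odd tail block contributes the scalar $(0)$ in its center, exactly as the odd base case of Theorem \ref{func on C} produced its $(0)$ block, so the relations matrix of $\ker(B_F)$ acquires the Type-$C$ per-component form $B\oplus(0)\oplus(-B^{\widehat{t}})$ (with the appropriate rotations) and no degree of freedom is created or destroyed by the substitution.

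The main obstacle is precisely this coupling. In Type-$C$ each tail component is analyzed independently in its own base case, whereas here the modified functionals share the central column and are linked by the corrections, so the base case must treat all $k$ odd components simultaneously. The delicate point is to check that the single central-column dot per component, together with the telescoping corrections, pins exactly one central entry per odd block -- no more and no fewer -- so that the substitution compensates the forbidden antidiagonal position with neither a surplus nor a deficit in $\dim\ker(B_F)$. Once this bookkeeping is confirmed, the dimension count matches the formula of Theorem \ref{index and homotopy C}, and regularity of $F$ follows exactly as in Theorem \ref{func on C}.
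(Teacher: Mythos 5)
Your proposal follows the same route as the paper's (very terse) proof sketch: reduce to the Type-$C$ argument of Theorem \ref{func on C} for everything except the odd-size tail components, modify the base case there, and induct over the odd tail components starting from the one closest to the aftertail (your $s=k$) and working outward in decreasing index, using the already-settled components $t>s$ to resolve the correction terms of $f_s$. Your write-up actually supplies more of the mechanism (the central-column equation (\ref{midrow}) and the telescoping of the corrections) than the paper records, but it is the same argument.
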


The proof requires a modification to the base case on the tail components for the odd component along with an appropriate induction along the finite set of odd-sized tail components, starting from the closest to the aftertail and working in decreasing index number from there.

\begin{ex}
Consider $\g=\p_{12}^B\frac{1|5|3|1}{4}$. This is a subalgebra of the seaweed of type  $\frac{1|5|3|1|5|1|3|5|1}{4|17|4}$. The meanders $M_{12}^B$ and $M$ are shown in Figures \ref{crazyBex1} and \ref{crazyBex2}, respectively, with the tail vertices and components colored blue. It follows from Theorem \ref{B ind} that $\ind\g=5$.
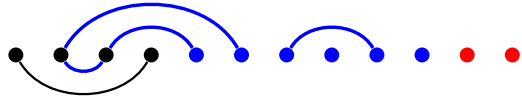
\begin{figure}[H]
$$\begin{tikzpicture}[scale=0.6]
	\def\Node{\node [circle, fill, inner sep=2pt]}
    \Node (1) at (1,0){};
    \Node (2) at (2,0){};
    \Node (3) at (3,0){};
    \Node (4) at (4,0){};
    \Node [blue] (5) at (5,0){};
    \Node [blue] (6) at (6,0){};
    \Node [blue] (7) at (7,0){};
    \Node [blue] (8) at (8,0){};
    \Node [blue] (9) at (9,0){};
    \Node [blue] (10) at (10,0){};
    \Node [red] (11) at (11,0){};
    \Node [red] (12) at (12,0){};
    
    \draw [line width=0.45mm, blue] (2) to[bend left=60](6);
    \draw [line width=0.45mm, blue] (3) to[bend left=60](5);
    \draw [line width=0.45mm, blue] (7) to[bend left=60](9);
    
    \draw [line width=0.3mm] (1) to[bend right=60](4);
    \draw [line width=0.45mm, blue] (2) to[bend right=60](3);
\end{tikzpicture}$$
\caption{Meander $M_{12}^B$ associated with $\p_{12}^B\frac{1|5|3|1}{4}$}
\label{crazyBex1}
\end{figure}

\begin{figure}[H]
$$\begin{tikzpicture}[scale=0.6]
	\def\Node{\node [circle, fill, inner sep=2pt]}
    \Node (1) at (1,0){};
    \Node (2) at (2,0){};
    \Node (3) at (3,0){};
    \Node (4) at (4,0){};
    \Node [blue] (5) at (5,0){};
    \Node [blue] (6) at (6,0){};
    \Node [blue] (7) at (7,0){};
    \Node [blue] (8) at (8,0){};
    \Node [blue] (9) at (9,0){};
    \Node [blue] (10) at (10,0){};
    \Node [red] (11) at (11,0){};
    \Node [red] (12) at (12,0){};
    
    \draw [line width=0.45mm, blue] (2) to[bend left=60](6);
    \draw [line width=0.45mm, blue] (3) to[bend left=60](5);
    \draw [line width=0.45mm, blue] (7) to[bend left=60](9);
    
    \draw [line width=0.3mm] (1) to[bend right=60](4);
    \draw [line width=0.45mm, blue] (2) to[bend right=60](3);
    
    \Node [red] at (13,0){};  
    
    \Node (100) at (25,0){};
    \Node (200) at (24,0){};
    \Node (300) at (23,0){};
    \Node (400) at (22,0){};
    \Node [blue] (500) at (21,0){};
    \Node [blue] (600) at (20,0){};
    \Node [blue] (700) at (19,0){};
    \Node [blue] (800) at (18,0){};
    \Node [blue] (900) at (17,0){};
    \Node [blue] (1000) at (16,0){};
    \Node [red] (1100) at (15,0){};
    \Node [red] (1200) at (14,0){};
    
    \draw [line width=0.45mm, red] (11) to[bend left=60] (1100);
    \draw [line width=0.45mm, red] (1100) to[bend left=60] (11);
    \draw [line width=0.45mm, red] (12) to[bend left=60] (1200);
    \draw [line width=0.45mm, red] (1200) to[bend left=60] (12);
        
    \draw [line width=0.45mm, blue] (200) to[bend right=60](600);
    \draw [line width=0.45mm, blue] (300) to[bend right=60](500);
    \draw [line width=0.45mm, blue] (700) to[bend right=60](900);
    
    \draw [line width=0.3mm] (100) to[bend left=60](400);
    \draw [line width=0.45mm, blue] (200) to[bend left=60](300);
\end{tikzpicture}$$
\caption{Meander $M$ associated with $\p_{12}^B\frac{1|5|3|1}{4}$}
\label{crazyBex2}
\end{figure}
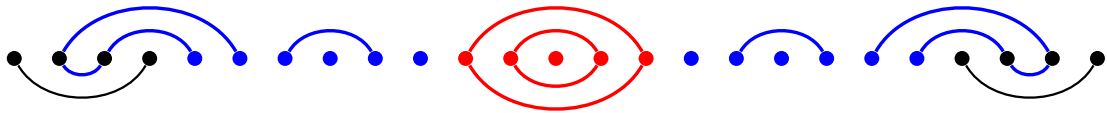
A functional described by Theorem \ref{typebfunctional} is 
\begin{align*}
    F=&e_{1,4}^*+e_{2,3}^*+e_{6,2}^*+e_{5,3}^*+e_{7,7}^*+e_{5,20}^*+e_{7,17}^*+e_{8,13}^*+e_{10,13}^*+e_{11,11}^*+e_{12,11}^*+e_{11,12}^*+e_{8,16}^*
\end{align*}
The seaweed $\g$ is illustrated in Figure \ref{crazyBex}, where the indices in the tail and aftertail components are colored blue and red, respectively, and a black dot is placed over each index in $\mathscr{I}_{F}$. Lines have been added to emphasize the core components of $\g$.

\begin{figure}[H]
$$\begin{tikzpicture}[scale=0.3]
	\def\Node{\node [circle, fill, inner sep=1.8pt]}
    \draw (0,0)--(0,25)--(25,25)--(25,0)--(0,0);
    \draw [dotted] (0,12)--(25,12)--(25,13)--(0,13);
    \draw [dotted] (12,0)--(12,25)--(13,25)--(13,0);
    \draw [dotted] (0,0)--(25,25)--(0,25)--(25,0);
    \draw [line width=0.45mm] (0,25)--(4,25)--(4,21)--(21,21)--(21,4)--(25,4)--(25,0)--(24,0)--(24,1)--(19,1)--(19,6)--(16,6)--(16,9)--(15,9)--(15,10)--(10,10)--(10,15)--(9,15)--(9,16)--(6,16)--(6,19)--(1,19)--(1,24)--(0,24)--(0,25);
    \draw [fill=blue, opacity=0.5] (1,24)--(3,24)--(3,21)--(21,21)--(21,3)--(24,3)--(24,1)--(19,1)--(19,6)--(16,6)--(16,9)--(15,9)--(15,15)--(9,15)--(9,16)--(6,16)--(6,19)--(1,19)--(1,24);
    \draw [fill=red, opacity=0.5](10,10)--(15,10)--(15,15)--(10,15)--(10,10);
    \node at (10.5,10.5){0};
    \node at (11.5,11.5){0};
    \node at (12.5,12.5){0};
    \node at (13.5,13.5){0};
    \node at (14.5,14.5){0};
    \node at (15.5,15.5){0};
    \node at (16.5,16.5){0};
    \node at (17.5,17.5){0};
    \node at (18.5,18.5){0};
    \node at (19.5,19.5){0};
    \node at (20.5,20.5){0};
    \draw (1,25)--(1,24)--(3,24)--(3,22)--(1,22);
    \draw (4,22)--(3,22)--(3,21)--(4,21)--(4,19)--(6,19)--(6,21);
    \draw (6,19)--(19,19)--(19,6);
    \draw (25,1)--(24,1)--(24,3)--(22,3)--(22,1);
    \draw (22,4)--(22,3)--(21,3)--(21,4)--(19,4)--(19,6)--(21,6);
    \draw (9,19)--(9,16)--(16,16)--(16,9)--(19,9);
    \draw (10,16)--(10,15)--(15,15)--(15,10)--(16,10);
    \Node at (3.5,24.5){};
    \Node at (1.5,23.5){};
    \Node at (1.5,19.5){};
    \Node at (2.5,20.5){};
    \Node at (6.5,18.5){};
    \Node at (19.5,20.5){};
    \Node at (16.5,18.5){};
    \Node at (12.5,17.5){};
    \Node at (12.5,15.5){};
    \Node at (10.5,14.5){};
    \Node at (10.5,13.5){};
    \Node at (11.5,14.5){};
    \Node at (15.5,17.5){};
\end{tikzpicture}$$
\caption{Indices in $\mathscr{I}_{F}$ on $\p_{12}^B\frac{1|5|3|1}{4}$}
\label{crazyBex}
\end{figure}

Direct computation yields that $B\oplus (0) \oplus \left(-B^{\widehat{t}}\right)$ is relations matrix for $\ker(B_{F})$, with
$$B=(b_1)\oplus\left(\begin{array}{cc}b_2&0\\0&-b_2\end{array}\right)\oplus(b_1)\oplus\left(\begin{array}{cc}-b_2&0\\0&b_2\end{array}\right)\oplus\left(\begin{array}{ccc}b_3&0&0\\0&0&0\\0&0&-b_3\end{array}\right)\oplus(0)\oplus\left(\begin{array}{cc}b_4+b_5&b_4\\b_4&b_5\end{array}\right).$$
\end{ex}
\end{subsection}

\begin{section}{Remark}
Naming explicit regular functionals on seaweed subalgebras of $D_n$ requires a different framework from that established in section \ref{Type B}.  It is possible that this may result in $\ker(B_F)$ having nonzero entries outside the core of the algebra. The difficulty arises from the forced zeroes on the antidiagonal of the elements of these matrix algebras. In general, these antidiagional 0's preclude the use of the functionals $e_{i,2n+1-i}^*$ which the framework in section \ref{type C} requires. In type-$B$ seaweeds, it was a natural first guess to simply ``move" the excluded functional $e_{i,j}^*$ to the center column (i.e., use instead the functional $e_{i,n+1}^*$).  This was not the full extent of the solution, as discussed in section \ref{Type B}, as a more recursive approach was necessary, but it was the base step for the inductive proof.  Unfortunately, there is no such ``natural guess'' in type-$D$.  Further, if one uses the more general basis-free definition of a biparabolic algebra, the situation for type-$D$ seaweeds is further complicated by their occasional lack of seaweed ``shape", when represented in matrix form (see \textbf{[2]}).

In a forthcoming articles, a frameworks for the development of regular functionals on seaweeds of type-$D$ will be presented. 
\end{section}

\vspace{5em}

\noindent
\Large{\textbf{References}}
\normalsize{}
\begin{itemize}
    \item [[1\hspace{-0.5em}]]  I. D. Ado.  The representation of Lie algebras by matrices. \textit{Uspekhi Matem. Nawk, (N.S.),} 2:169-173, 1947.
    \item [[2\hspace{-0.5em}]] A. Cameron.  Combinatorial index formulas for Lie algebras of seaweed type, Ph.D thesis, Lehigh University. \textit{Manuscript}, 2019.
    \item [[3\hspace{-0.5em}]] V. Coll and A. Dougherty.  On Kostant's cascade and regular functionals on seaweed Lie algebras. \textit{Manuscript}, 2019.
    \item [[4\hspace{-0.5em}]] V. Coll, A. Dougherty, M. Hyatt, and N. Mayers.  Meander graphs and Frobenius seaweed Lie algebras III. \textit{Journal of Generalized Lie Theory and Applications}, 11(2), 2017.
    \item [[5\hspace{-0.5em}]] V. Coll, M. Hyatt, and C. Magnant.  Symplectic meanders.  \textit{Communications in Algebra}, 45(11):4717-4729, 2017.
    \item [[6\hspace{-0.5em}]] V. Coll, M. Hyatt, C. Magnant, and H. Want.  Meander graphs and Frobenius seaweed Lie algebras II.  \textit{Journal of Generalized Lie Theory and Applications}, 9(1), 2015.
    \item [[7\hspace{-0.5em}]] V. Coll, C. Magnant, and H. Want.  The signature of a meander.  \textit{arXiv:1206.2705}, 2012.
    \item [[8\hspace{-0.5em}]] V. Dergachev and A. Kirillov.  Index of Lie algebras of seaweed type.  \textit{Journal of Lie Theory}, 10(2):331-343, 2000.
    \item [[9\hspace{-0.5em}]] J. Dixmier.  \textit{Algebres Enveloppantes}.  Gauthier-Villars, 1974.
    \item [[10\hspace{-0.5em}]] A. Dougherty.  Regular functionals on seaweed Lie algebras, Ph.D thesis, Lehigh university. \textit{Preprint}, 2019.
    \item [[11\hspace{-0.5em}]] D. I. Panyushev.  An extension of Rais' theorem and seaweed subalgebras of simple Lie algebras.  \textit{Annales de l'institut Fourier}, 55(3):693-715, 2005.
    \item [[12\hspace{-0.5em}]] A. Joseph.  The minimal orbit in a simple Lie algebra and its associated maximal ideal. \textit{Annales scientifiques de l'Ecole Normale Superieure}, Ser. 4, 9(1):1-29, 1976.
    \item [[13\hspace{-0.5em}]] A. Joseph.  The integrality of an adapted pair.  \textit{Transformation Groups}, 20(3):771-816, 2015.
    \item [[14\hspace{-0.5em}]] B. Kostant.  The cascade of orthogonal roots and the coadjoint structure of the nilradical of a borel subgroup of a semisimple Lie group.  \textit{Moscow Mathematical Journal}, 12(3):605-620, 2012.
    \item [[15\hspace{-0.5em}]] A. I. Ooms.  On Lie algebras having a primitive universal enveloping algebra.  \textit{J. Algebra}, 32(3):488-500, 1974.
    \item [[16\hspace{-0.5em}]] D. Panyushev.  Inductive formulas for the index of seaweed Lie algebras.  \textit{Moscow Mathematical Journal}, 1(2):221-241, 2001.
    \item [[17\hspace{-0.5em}]] D. Panyushev and O. Yakimova.  On seaweed subalgebras and meander graphs in type C. \textit{Pacific Journal of Mathematics}, 285(2):485-499, 2016.
    \item [[18\hspace{-0.5em}]] P. Tauvel and R. W. Yu. Indice et formes lineaires stables dans les algebres de Lie.  \textit{Journal of Algebra}, 273:507-516, 2004.
    \item [[19\hspace{-0.5em}]] J. Tits.  Une remarque sur la structure des algebres de Lie semi-simple complexes.  \textit{Proc. Konninkl. ederl. Akad. Wetenschappen, Series A}, 63:48-53, 1960.
\end{itemize}

\section{Appendix A}
Recall from Theorem \ref{My Theorem 1} that the indices in $\mathscr{I}_{F_n}$ are illustrated in Figure \ref{F_n inds A} as the grey region and solid lines.
\begin{figure}[H]
$$\begin{tikzpicture}[scale=0.6]
    \filldraw[draw=black, fill=gray!20, line width=0.45mm] (-0.5,-0.5)--(4.5,4.5)--(-0.5,4.5)--(-0.5,-0.5);
    
    \draw [thick] (-0.6,-0.6) to[bend left=10] (-0.6,4.6);
    \draw [thick] (4.6,-0.6) to[bend right=10] (4.6,4.6);    
\end{tikzpicture}$$
\caption{Indices in $\mathscr{I}_{F_n}$}
\label{F_n inds A}
\end{figure}

\begin{proof}[Proof of Theorem \ref{My Theorem 1}]
Let $B=[b_{i,j}]$ be a relations matrix of $\ker(B_{F_n})$. It follows from Theorem \ref{Index Cor} that the minimum dimension of $\ker(B_F)$ over all $F\in\g^*$ is $n$. The Theorem follows from the verification of the following two claims:

\bigskip 

\textbf{Claim 1:} For each $(i,j)\in (n-1)\times(n-1)$, $b_{i,j}=\sum_{s=1}^n c_sb_{s,n}+\sum_{s=1}^{n-1}c_s'b_{n,s}$ for suitable coefficients $c_s\in\C$,

\bigskip

\noindent
and 

\bigskip

\textbf{Claim 2:} $b_{n,s}=b_{s,n}$, for all $s\in[1,n]$.

\bigskip

To understand why these claims are sufficient, we argue as follows: Claim 1 will define the top $(n-1)\times(n-1)$ matrix in terms of the elements in the last row/column of $B$, determining that there are at most $2n-1$ degrees of freedom in $B$. Claim 2 will then establish that there are exactly (as it cannot possibly be smaller due to minimality of the index) $n$ degrees of freedom in these $2n-1$ positions. 

\bigskip 

\textbf{\underline{Proof of Claim 1:}}
\bigskip

The proof is by induction. We proceed according to the following steps listed and verified below.
\begin{enumerate}
    \item An application of Lemma \ref{Symmetry Lemma} (``symmetry lemma") halves the work by allowing us to only consider indices $(i,j)$ illustrated in Figure \ref{Claim1Proof}.
    \item The system of equations $F([B,e_{i,j}])=0$ is developed explicitly, along with two formulas which will be needed in the inductive step.
    \item For the base case, we show that the first and last row are explicit sums of elements $b_{n,s}$. Proceeding by induction on pairs of rows (first and last) moving towards the center of $B$ in the halved domain, we show that for $i\in\left[1,\lceil\frac{n}{2}\rceil\right]$, any elements $b_{i,j}$ and $b_{n+1-i,j}$ can be defined in terms of the previous row defined. More specifically, $b_{i,j}=b_{i-1,j-1}+b_{n,s}$, for some $s\in[1,n]$ and $b_{n+1-i,j}=b_{n+2-i,j-1}+b_{n,r}$, for some $r\in[1,n]$. 
\end{enumerate}

\bigskip
\textbf{Step 1:} For ease of notation, let $b_s=b_{n,s}$ and $b_s'=b_{s,n}$, for all $s\in[1,n]$ -- note that $b_n=b_n'$. By invoking Lemma \ref{Symmetry Lemma} (and making use of our convenient choice for $b_s$ and $b_s'$ being symmetric across the diagonal), it suffices to show the claim for all elements $b_{i,j}$ such that 

\begin{align*}
    (i,j)\in\mathscr{I}&=\left\{(i,j)\;\Bigg|\; i\in \left[1,\left\lceil\frac{n}{2}\right\rceil\right],\;j\in[i,n+1-i]\right\}\;\cup\;\left\{(i,j)\;\Bigg|\;i\in\left(\left\lceil\frac{n}{2}\right\rceil,n\right],\;j\in(n+1-i,i]\right\}.
\end{align*}

\noindent
The indices in $\mathscr{I}$ are illustrated in Figure \ref{Claim1Proof} as the grey regions and solid lines.
\begin{figure}[H]
$$\begin{tikzpicture}[scale=0.5]
    \draw [thick] (-0.6,-0.6) to[bend left=10] (-0.6,4.6);
    \draw [thick] (4.6,-0.6) to[bend right=10] (4.6,4.6);
    
    \filldraw[line width=0.45mm, draw=black, fill=gray!20] (-0.5,4.5)--(4.5,4.5)--(2,2)--(-0.5,4.5);
    \filldraw[dashed, draw=black,fill=gray!20] (-0.5,-0.5)--(4.5,-0.5)--(2,2)--(-0.5,-0.5);
    \draw [line width=0.45mm] (-0.5,-0.5)--(4.5,-0.5)--(2,2);
\end{tikzpicture}$$
\caption{Indices in $\mathscr{I}$}
\label{Claim1Proof}
\end{figure}
We will define $b_{i,j}$ in terms of elements $b_s$ over all $(i,j)\in\mathscr{I}$, and it will follow that every $(i,j)\not\in\mathscr{I}$ (and all $(i,i)$ on the diagonal) are defined in terms of elements $b_s'$.

\bigskip 
\textbf{Step 2:} To begin, observe that

$$\mathscr{I}_{F_n}=\{(i,j)\;|\;1\leq i\leq n,\;\;1\leq j\leq n+1-i\}=\{(i,j)\;|\;1\leq j\leq n,\;\;1\leq i\leq n+1-j\}$$

\noindent 
and refer to Lemma \ref{Syst of Eqs} to see that $B$ must satisfy $n^2$ conditions of the form
\begin{equation} 
\label{eqs}
\sum_{s=1}^{n+1-j}b_{s,i}=\sum_{s=1}^{n+1-i}b_{j,s}
\end{equation}
over $(i,j)\in \mathscr{I}_{\gl(n)}$. There are no additional requirements on $B$ as there are no forced zeroes in $\gl(n)$. For all $i\in[2,n-1]$ consider  applying equation (\ref{eqs}) as follows:

$$b_{i,j}=\sum_{s=1}^ib_{s,j}-\sum_{s=1}^{i-1}b_{s,j}=\sum_{s=1}^{n+1-(n+1-i)}b_{s,j}-\sum_{s=1}^{n+1-(n+1-(i-1))}b_{s,j}=\sum_{s=1}^{n+1-j}b_{n+1-i,s}-\sum_{s=1}^{n+1-j}b_{n+1-(i-1),s}.$$

\noindent 
This yields the following formula:
\vspace{-2em}
\begin{center}
\begin{equation}
\label{eq1}
b_{n+2-i,n+1-j}=b_{i,j+1}+b_{n+1-i,n+1-j}-b_{i,j}.
\end{equation}
\end{center}
By now expressing $b_{i,j}$ as $\sum_{s=1}^jb_{i,s}-\sum_{s=1}^{j-1}b_{i,s}$ and applying equation (\ref{eqs}), we get a second formula:
\vspace{-2em}
\begin{center}
\begin{equation}
\label{eq2}
b_{n+1-i,n+1-j}=b_{i,j}+b_{n+2-i,n+1-j}-b_{i,j+1}.
\end{equation}
\end{center}

\bigskip 
\textbf{Step 3:} We proceed by induction. The base of the induction will be defining the first and last rows of $B$ in terms of elements $b_s$. From there, assuming we have defined $b_{i,j}$ appropriately for all $(i,j)\in\mathscr{I}$ with $i\in[1,I]\cup[n+1-I,n]$ (i.e., the first and last $I$ rows of $B$), we will define $b_{I+1,j}$ and $b_{n+1-(I+1),j}$ for $(I+1,j),(n+1-(I+1),j)\in\mathscr{I}$ in terms of elements $b_s$. The last row is already filled by $b_{n,i}=b_i$ for $i\in[1,n]$. The first row comes from equation (\ref{eqs}) evaluated for $j=n$: \begin{equation} 
\label{first row}
b_{1,i}=\sum_{s=1}^{n+1-n}b_{s,i}=\sum_{s=1}^{n+1-i}b_{n,s}=\sum_{s=1}^{n+1-i}b_s.
\end{equation}
This establishes the base. 

Now, for the induction hypothesis, assume that for some $I\in[1,\lfloor\frac{n}{2}\rfloor)$, $b_{i,j}$ and $b_{n+1-i,j}$ are defined in terms of elements $b_s$, for all indices $(i,j),(n+1-i,j)\in\mathscr{I}$ with $i\leq I$ (some care is needed if $I=\lfloor\frac{n}{2}\rfloor$ -- we handle this separately for Equations (\ref{eq3}) and (\ref{eq4}), depending on whether $n$ is even or odd). We assert the following about the indices $(i,j)\in\mathscr{I}$:
\vspace{-2em}
\begin{center}
\begin{equation}
\label{eq3}
b_{i,j}=b_{i-1,j-1}-b_{n+3-i-j},
\end{equation}
\end{center}
for $(i,j)\in\mathscr{I}$ such that $i\in\left[1,\left\lfloor\frac{n}{2}\right\rfloor\right]$, if $n$ is odd, and $i\in\left[1,\frac{n}{2}+1\right]$, if $n$ is even,
\vspace{-2em}
\begin{center}
\begin{equation}
\label{eq4}
b_{n+1-i,j}=b_{n+2-i,j+1}+b_{j-i+1},
\end{equation}
\end{center}
for $(i,j)\in\mathscr{I}$ such that $i\in\left[1,\left\lfloor\frac{n}{2}\right\rfloor+1\right]$, if $n$ is odd, and $i\in\left[1,\frac{n}{2}\right]$, if $n$ is even. The need for the domain restrictions is due to the necessary conditions that $n+3-i-j>0$ and $j-i+1>0$. Equations (\ref{eq3}) and (\ref{eq4}) recursively define the entries of $B$ for indices in $\mathscr{I}$ in terms of elements $b_s$.

Now, for the induction, assume that Equations (\ref{eq3}) and (\ref{eq4}) are true for all $b_{i,j}$ and $b_{n+1-i,j}$ with $i\in[2,I]$ and $(i,j),(n+1-i,j)\in\mathscr{I}$. Consider $b_{I+1,j}$ and $b_{n+1-(I+1),j}$. Let $i=n+1-I$ and $s=n+1-j$. We invoke equation (\ref{eq1}) twice to yield:

\begin{align*}
b_{I+1,j}&=b_{n+2-i,n+1-s}=b_{i,s+1}+b_{n+1-i,n+1-s}-b_{i,s}=b_{n+1-I,s+1}+b_{I,n+1-s}-b_{n+1-I,s},
\end{align*}

\vspace{-1em}
\noindent 
and 

\begin{align*}
b_{I,j-1}&=b_{n+2-(i+1),n+1-(s+1)}=b_{i+1,s+2}+b_{n+1-(i+1),n+1-(s+1)}-b_{i+1,s+1}=b_{n+2-I,s+2}+b_{I-1,n-s}-b_{n+2-I,s+1}.
\end{align*}

\noindent 
Therefore, by the induction hypotheses on $(I,s+1)$, $(I,n+1-s)$, and $(I,s)$, we get

\begin{align*}
b_{I,j-1}-b_{I+1,j}&=(b_{n+2-I,s+2}-b_{n+1-I,s+1})+(b_{I-1,n-s}-b_{I,n+1-s})-(b_{n+2-I,s+1}-b_{n+1-I,s})\\
&=-b_{s+2-I}+b_{n+3-I-(n+1-s)}+b_{s+1-I}\\
&=-b_{s+2-I}+b_{s+2-I}+b_{s+1-I}\\
&=b_{s+1-I}\\
&=b_{n+3-j-(I+1)}.
\end{align*}

\noindent 
The foregoing equation establishes (\ref{eq3}) on $(I+1,j)$. In a similar fashion, we invoke (\ref{eq2}) twice to yield:

\begin{align*}
b_{n+1-(I+1),j}&=b_{n+1-(I+1),n+1-s}=b_{I+1,s}+b_{n+2-(I+1),n+1-s}-b_{I+1,s+1}=b_{I+1,s}+b_{n+1-I,n+1-s}-b_{I+1,s+1},
\end{align*}

\noindent 
and

\begin{align*}
b_{n+2-(I+1),j+1}&=b_{n+1-I,n+1-(s-1)}=b_{I,s-1}+b_{n+2-I,n+1-(s-1)}-b_{I,s}=b_{I,s-1}+b_{n+2-I,n+2-s}-b_{I,s}.
\end{align*}

\noindent 
By the inductive hypotheses and equation (\ref{eq3}) on $(I+1,s)$, $(I,n+1-s)$, and $(I+1,s+1)$, we have
\vspace{-1em}

\begin{align*}
b_{n+1-(I+1),j}-b_{n+2-(I+1),j+1}&=(b_{I+1,s}-b_{I,s-1})+(b_{n+1-I,n+1-s}-b_{n+2-I,n+2-s})-(b_{I+1,s+1}-b_{I,s})\\
&=-b_{n+3-(I+1)-s}+b_{n+1-s-I+1}+b_{n+3-(I+1)-(s+1)}\\
&=-b_{n+2-I-s}+b_{n+2-I-s}+b_{n+1-I-s}\\
&=b_{n+1-I-s}\\
&=b_{n+1-I-(n+1-j)}\\
&=b_{j-(I+1)+1}.
\end{align*}

\noindent 
The foregoing equation establishes (\ref{eq4}) on $(I+1,j)$. Therefore, to complete Claim 1, it suffices show that equations (\ref{eq3}) and (\ref{eq4}) hold as a relation between the indices of $\mathscr{I}$ for $I=2$.

Recall from equation (\ref{first row}) that $b_{1,j}=\sum_{s=1}^{n+1-j}b_s$ holds for all $j\in[1,n]$. By (\ref{eq1}), we get 

\begin{align*} 
b_{2,j}&=b_{n,n+2-j}+b_{1,j}-b_{n,n+1-j}\\
&=\left(\sum_{s=1}^{n+2-j}b_s\right)-b_{n+1-j}\\
&=\left(\sum_{s=1}^{n+2-j}b_s\right)-b_{n+2-j}+b_{n+2-j}-b_{n+1-j}\\
&=b_{1,j-1}+b_{n+1-j}.
\end{align*}

\noindent 
The foregoing equation verifies (\ref{eq3}) for $I=2$. Now, by (\ref{eq2}) and the equation justified immediately above,

\begin{align*}
b_{n-1,j}&=b_{2,n+1-j}+b_{n,j}-b_{2,n+2-j}\\
&=(b_{1,n-j}-b_j)+b_j-(b_{1,n+1-j}-b_{j-1})\\
&=\left(\sum_{s=1}^{n+1-(n-j)}b_s\right)-\left(\sum_{s=1}^{n+1-(n+1-j)}b_s\right)+b_{j-1}\\
&=\left(\sum_{s=1}^{j+1}b_s\right)-\left(\sum_{s=1}^jb_s\right)+b_{j-1}\\
&=b_{j+1}+b_{j-1}\\
&=b_{n,j+1}+b_{(j-2)+1}.
\end{align*}

\noindent 
This establishes Claim 1.

\bigskip 

\textbf{\underline{Proof of Claim 2:}} 
\bigskip

The proof is by two separate inductions along the main diagonal of $B$. By Claim 1, every element $b_{i,i}$ is defined in terms of elements $b_{n,s}$. By Lemma \ref{Symmetry Lemma}, we know that every element $b_{i,i}$ is also defined in terms of elements $b_{s,n}$. The proofs proceed by equating these two formulas for $b_{i,i}$. The first induction will move from the lower right entry of $B$ up to the center of the matrix. At that time, the second induction picks up with an appropriate handoff of the indices depending on whether $n$ was even or odd to move up the rest of the diagonal. Assume that $n$ is even -- the argument for $n$ odd simply requires an appropriate adjustment of the even case. The proof of Claim 2 rests on the establishment of the following two subclaims, both of which are established by (yet another) induction.

\bigskip

\textbf{Claim 2.1:} For $k\in\left[0,\frac{n}{2}-1\right]$,  $b_{n-k,n-k}=\sum_{s=0}^kb_{n-2s}\text{ and }b_{n-2k}=b_{n-2k}',$

\bigskip 

\textbf{Claim 2.2:} For $k\in\left[0,\frac{n}{2}-1\right]$,  $b_{\frac{n}{2}-k,\frac{n}{2}-k}=b_{\frac{n}{2}+1,\frac{n}{2}+1}+\sum_{s=0}^k b_{1+2s}\text{ and }b_{1+2k}=b_{1+2k}'$.

\bigskip 
\noindent
\textbf{Proof of Claim 2.1:} 
The proof is by induction. Trivially, $b_{n,n}=b_n=b_n'$. Further, by Formula (\ref{eq4}),

$$b_{n-1,n-1}=b_{n,n}+b_{n-2}.$$

\noindent 
Now, by Lemma \ref{Symmetry Lemma} we have

$$b_n'+b_{n-2}'=b_n+b_{n-2}\hspace{2em}\Rightarrow\hspace{2em}b_{n-2}=b_{n-2}'.$$

\noindent 
 Now, fix $k<\frac{n}{2}-1$ and assume for all $K\leq k$ Claim 2.1 holds. By Formula (\ref{eq4}), we have
 
 $$b_{n-(k+1),n-(k+1)}=b_{n-k,n-k}+b_{n-2k-2}=\sum_{s=0}^kb_{n-2s}+b_{n-2(k+1)}$$

\noindent 
 (this is seen using $i=k+2$, $j=n-(k+1)$ in Formula (\ref{eq4})). By Lemma \ref{Symmetry Lemma}, we have
 
 $$\sum_{s=0}^kb_{n-2s}+b_{n-2(k+1)}=\sum_{s=0}^kb_{n-2s}'+b_{n-2(k+1)}'.$$

\noindent 
 By induction, $b_{n-2(k+1)}=b_{n-2(k+1)}'$ and $b_{n-(k+1),n-(k+1)}=\sum_{s=0}^{k+1}b_{n-2s}$. This proves Claim 2.1. \hfill $\blacksquare$

\bigskip 
\noindent
\textbf{Proof of Claim 2.2:} Claim 2.2 will yield that for all odd indices $s$, $b_s=b_s'$ which is the second half of Claim 2 provided $n$ is even. The proof is by induction. 

Since $n$ is even, by Formula (\ref{eq3}) and the fact that $n-(\frac{n}{2}-1)=\frac{n}{2}+1$, we can see that 

$$b_{\frac{n}{2},\frac{n}{2}}=b_{\frac{n}{2}+1,\frac{n}{2}+1}+b_{n+3-(\frac{n}{2}+1)-(\frac{n}{2}+1)}=b_{\frac{n}{2}+1,\frac{n}{2}+1}+b_1.$$ 

\noindent 
By Lemma \ref{Symmetry Lemma} and Claim 2.1, $b_1=b_1'$. Now, assume for some $k<\frac{n}{2}-1$ we have that Claim 2.2  holds for all $K\leq k$. Then by Formula (\ref{eq3}), we have 

$$b_{\frac{n}{2}-(k+1),\frac{n}{2}-(k+1)}=b_{\frac{n}{2}-k,\frac{n}{2}-k}+b_{3+2k}=\left(b_{\frac{n}{2}+1,\frac{n}{2}+1}+\sum_{s=0}^kb_{1+2s}\right)+b_{1+2(k+1)}.$$

\noindent 
By Lemma \ref{Symmetry Lemma}, $b_{1+2(k+1)}=b_{1+2(k+1)}'$ and $b_{\frac{n}{2}-(k+1),\frac{n}{2}-(k+1)}=b_{\frac{n}{2}+1,\frac{n}{2}+1}+\sum_{s=0}^{k+1}b_{1+2s}$, as desired. This proves Claim 2.2. \hfill $\blacksquare$

This completes the proof of Claim 2, and thus the proof of Theorem \ref{My Theorem 1}.
\end{proof}

\end{document}